\theoremstyle{plain}
\newtheorem{thm}{Theorem}[section]
\newtheorem{cor}[thm]{Corollary}
\newtheorem{lem}[thm]{Lemma}
\newtheorem{example}[thm]{Example}
\theoremstyle{definition}
\newtheorem{defi}[thm]{Definition}
\theoremstyle{remark}
\newtheorem{rem}[thm]{Remark}
\numberwithin{equation}{section}
\def \dist {\mathrm{dist}}
\newcommand{\R}{\mathbb{R}}
\newcommand{\defeq}{\mathrel{\mathop:}=}
\newcommand{\tr}{\mathrm{Tr}}
\newcommand{\average}{{\mathchoice {\kern1ex\vcenter{\hrule height.4pt
width 6pt depth0pt} \kern-9.7pt} {\kern1ex\vcenter{\hrule
height.4pt width 4.3pt depth0pt} \kern-7pt} {} {} }}
\def\R{\mathbb{R}}
\begin{document}

\title[Sharp regularity for degenerate obstacle type problems]{Sharp regularity for degenerate obstacle type problems: a geometric approach}

\author{Jo\~ao Vitor Da Silva}

\address{Departamento de Matem\'atica - Instituto de Ci\^{e}ncias Exatas - Universidade de Bras\'{i}lia.
\hfill\break \indent Campus Universit\'{a}rio Darcy Ribeiro, 70910-900, Bras\'{i}lia - Distrito Federal - Brazil.}
\email[J.V. da Silva]{J.V.Silva@mat.unb.br}

\address{Instituto de Investigaciones Matem\'{a}ticas Luis A. Santal\'{o} (IMAS) - CONICET,
\hfill\break \indent Ciudad Universitaria, Pabell\'{o}n I (1428) Av. Cantilo s/n - Buenos Aires, Argentina}
\email[J.V. da Silva]{jdasilva@dm.uba.ar}

\author{Hern\'an Vivas}


\address{Instituto de Investigaciones Matem\'{a}ticas Luis A. Santal\'{o} (IMAS) - CONICET,
\hfill\break \indent Ciudad Universitaria, Pabell\'{o}n I (1428) Av. Cantilo s/n - Buenos Aires, Argentina}

\address{Centro Marplatense de Investigaciones matem\'aticas/Conicet,
\hfill\break \indent Dean Funes 3350, 7600 Mar del Plata, Argentina.}
\email[Hern\'an Vivas]{havivas@mdp.edu.ar}

\keywords{Free boundary problems, nonlinear elliptic equations, regularity of solutions.}
\subjclass[2010]{35R35, 35J60, 35B65}

\begin{abstract}
We prove sharp regularity estimates for solutions of obstacle type problems driven by a class of degenerate fully nonlinear operators. More specifically, we consider viscosity solutions of
\begin{equation*}
\left\{
\begin{array}{rcll}
  |D u|^\gamma F(x, D^2u)& = & f(x)\chi_{\{u>\phi\}} & \textrm{ in } B_1 \\
  	u(x) & \geq & \phi(x) & \textrm{ in } B_1 \\
  u(x) & = & g(x) & \textrm{ on } \partial B_1,
\end{array}
\right.
\end{equation*}
with $\gamma>0$, $\phi \in C^{1, \alpha}(B_1)$ for some $\alpha\in(0,1]$, a continuous boundary datum $g$ and $f\in L^\infty(B_1)\cap C^0(B_1)$ and prove that they are $C^{1,\beta}(B_{1/2})$ (and in particular at free boundary points) where $\beta=\min\left\{\alpha, \frac{1}{\gamma+1}\right\}$. Moreover, we achieve such a feature by using a recently developed geometric approach which is a novelty for these kind of free boundary problems. Further, under a natural non-degeneracy assumption on the obstacle, we prove that the free boundary $\partial\{u>\phi\}$ has Hausdorff dimension less than $n$ (and in particular zero Lebesgue measure). Our results are new even for degenerate 
problems such as
\[
   |Du|^\gamma \Delta u=\chi_{\{u>\phi\}} \quad \text{with}\quad \gamma>0.
\]
\end{abstract}

\maketitle

\section{Introduction}

The aim of this work is twofold: on one hand, to get sharp regularity estimates for solutions to an obstacle type problem involving a degenerate fully nonlinear operator. On the other hand, in order to achieve such purposes, we appeal to improvement of flatness technique and geometric estimates that have proved to be very useful in dealing with regularity issues for elliptic/parabolic PDEs in the last decade (see, \cite{AdaSRT19}, \cite{ART15}, \cite{ATU18}, \cite{APR17}, \cite{BeDe14}, \cite{DD19}, \cite{daSLR}, \cite{IS13} and \cite{ST15} for some enlightening examples), but that, to the best of the authors' scientific knowledge, have not been applied to free boundary problems of obstacle type as the one dealt with in this paper.

To be more precise, in this manuscript we study geometric regularity estimates for obstacle type problems governed by second order nonlinear elliptic operators of degenerate type as follows:
\begin{equation}\label{Eq1}
\left\{
\begin{array}{rcll}
  |D u|^\gamma F(x, D^2u)& = & f(x)\chi_{\{u>\phi\}} & \textrm{ in } B_1 \\
  u & \geq & \phi & \textrm{ in } B_1 \\
  u & = & g & \textrm{ on } \partial B_1,
\end{array}
\right.
\end{equation}
where $\phi$ is a (suitably regular) obstacle, $g$ is a continuous boundary datum, $f$ is a bounded and continuous function, $\gamma>0$, $\chi_E$ stands for the characteristic function of the set $E$ and $F:B_1\times \text{Sym}(n) \longrightarrow \R$ is a second order fully nonlinear operator which is uniformly elliptic and satisfies minimal continuity assumptions on coefficients to be presented soon (recall that $\text{Sym}(n)$ is the space of symmetric matrices in $\R^n$). Hereafter \eqref{Eq1} will be referred to as the $(F, \gamma,\phi, f)$-obstacle problem.

\subsection{Obstacle problems for non-divergence form operators}

In Mathematical Physics the classical obstacle problem refers to the equilibrium position of an elastic membrane (whose boundary is held fixed) lying above a given barrier (an obstacle) and subject to the action of external forces \textit{e.g}. friction, tension, air resistance and/or gravity. In its most simplified model, the height of such a membrane fulfils the following problem (in a suitable weak sense):
$$
\left\{
\begin{array}{rcll}
  \Delta u& = & \chi_{\{u>0\}} & \textrm{ in } \Omega \\
  u & \geq & 0 & \textrm{ in } \Omega \\
  u & = & g & \textrm{ on } \partial \Omega,
\end{array}
\right.
$$
where $\Omega \subset \R^n$ is a given domain and $g$ is a regular boundary datum. Notice that in such a setting the right-hand side of above (first) equation processes ``a jump'' across the \textit{a priori} unknown interface $\partial\{u >0\}$, the so-named \textit{the free boundary}.

We should remember that obstacle type problems have attracted an increasing enthusiasm of the multidisciplinary scientific community for the last five decades or so. One of the main reasons, besides their intrinsic mathematical appeal that combines tools from regularity theory for PDEs, Calculus of Variations, Geometric Measure Theory, Nonlinear Potential Theory and Harmonic Analysis, is that they are ubiquitous in Sciences, Mechanics, Engineering and Industry. In fact, problems as varied as flow through porous dam, cellular membranes' permeability, optimal stopping problems in Mathematical Finance, superconductivity of bodies in mean-field models in Physics are just some examples of phenomena that appear to be well described by these type of problems. We refer the reader to \cite{PSU} and the references therein for an instrumental survey and progresses of such investigations concerning obstacle problems with divergence structure.

Concerning elliptic nonlinear problems in non-divergence form, when the solutions are assumed to be non-negative, the works \cite{L} and \cite{LS01} address a complete study on obstacle type problems in the fully nonlinear (uniformly elliptic) scenario with homogeneous obstacles and/or source terms and their corresponding regularity theories of solutions and free boundaries. Recently, Blank and Teka in \cite{BlankTeka} deal with strong solutions $w\geq 0$ of an obstacle problem of the form
$$
    \mathfrak{L}\, w(x) = a^{ij}(x)D_{ij} w(x) = \chi_{\{w>0\}} \quad \text{in} \quad B_1.
$$
In such a context, by assuming that $a^{ij} \in \text{VMO}(\Omega)$ (and uniform ellipticity), the authors prove existence of nontrivial solutions, non-degeneracy and optimal regularity of solutions. Finally, regarding inhomogeneous elliptic obstacle problems in \textit{non-divergence form} (with possibly discontinuous coefficients and irregular obstacles and nonlinearities), we must quote \cite{BLOP}, where the authors study the following problem:
$$
  \left\{
  \begin{array}{rcll}
  F(x, D^2 u) &\leq & f & \textrm{ in } \Omega \\
  (F(x, D^2 u)-f)(u-\psi) & = & 0 & \textrm{ in } \Omega \\
  u&\geq &\psi&\textrm{ in } \Omega\\
  u & = & 0 & \textrm{ on } \partial \Omega,
\end{array}\right.
$$
where the fully nonlinear operator $F: \Omega \times \text{Sym}(n) \to \R$ is supposed to be uniformly elliptic (with $X \mapsto F(x, X)$ a convex mapping with $F(x, 0) = 0$ for all $x \in \Omega$). In such a scenario, they investigate existence/uniqueness and regularity issues of solutions. Of particular interest, they establish H\"{o}lder continuity of the gradient of the solutions (see, \cite[Theorem 5.4]{BLOP}).

From a strictly mathematical perspective, operators such as the one in \eqref{Eq1} are essentially the simplest (non translation invariant) example of a more general class of degenerate operators that have attracted much attention in the PDE community over the last decade and a half. Particularly, we recommend the reading of Birindelli-Demengel's fundamental works \cite{BerDem} and \cite{BeDe14} for a number of interesting examples of such operators.

It is worth highlighting that some of major difficulties in dealing with such a class of operators are: its non-divergence structure, in consequence of which we are not allowed to make use of (nowadays) classical estimates from nonlinear potential theory/harmonic analysis (see \cite{ALS15} or \cite{PSU}), and its degeneracy character, which implies that diffusion properties (e.g., uniform ellipticity of operator) collapse along an \textit{a priori} unknown set of singular points of solutions, namely  the set $\mathcal{S}_0 \defeq \{x\in B_1 : |Du(x)| = 0\}.$ 

The first feature (the non-divergence structure characteristic of fully nonlinear equations) is already present in the non-degenerate case. Indeed, if we consider the following equation
\[
  F(D^2u)=0\quad\textrm{ in } \quad B_1
\]
for a uniformly elliptic operator $F$, one standard way to achieve regularity is via a formal linearization process, where both $u$ and its first derivatives satisfy linear elliptic equations in non-divergence form with bounded measurable coefficients. In this context, Krylov-Safonov's Harnack inequality yields that solutions are $C_{\text{loc}}^{1,\alpha_F}$ for some universal, but unknown $\alpha_F \in (0, 1]$. The viscosity solutions' language allows us to obtain similar conclusions without appealing to such a linearization procedure, see for instance \cite[Ch.5 \S 3]{CC95}. Nevertheless, to get higher regularity such as the desired $C^2$, that would make solutions classical, a further restriction has to be imposed on the operator. Roughly speaking, if we wish to repeat the formal process described above to get information on second derivatives of $u$, we need to enforce a sign constraint on the second derivatives of $F$ with respect to $X$. As a matter of fact, under a concavity (or convexity) assumption the seminal papers of Evans \cite{Ev82} and Krylov \cite{Kry83} provide local $C^{2,\alpha}$ estimates (see, \cite[Ch.6 \S 1]{CC95} for a slightly refined proof). On the other hand, if no assumption other than uniform ellipticity is imposed, solutions are not better than $C^{1,\alpha_F}$, as was addressed by Nadirashvili-Vl\u{a}du\c{t} in \cite{NV1} and some subsequent works (even thought $F$ is a smooth operator).

The degenerate character of our operators makes the situation even more delicate. Given $\gamma>0$, $F$ a uniformly elliptic operator and $f\in L^\infty(B_1)\cap C^0(B_1)$ we can study the degenerate problem
\begin{equation}\label{eq.deg}
|D u|^\gamma F(D^2 u)=f(x)\quad\textrm{ in } \quad B_1.
\end{equation}
Imbert and Silvestre in \cite[Theorem 1]{IS13} showed that solutions to \eqref{eq.deg} are $C^{1, \alpha}$ for some (small) $\alpha \in (0, 1)$. In the aftermath of these studies, in \cite[Theorem 3.1]{ART15}, Ara\'{u}jo, Ricarte and Teixeira showed that in fact, given $\beta\in (0,\alpha_F)\cap\left(0,\frac{1}{1+\gamma}\right]$ then $u\in C^{1,\beta}\left(B_{1/2}\right)$ and that this result is optimal and holds for operators that are not necessarily translation invariant. In particular, if $F$ is convex or concave, solutions belong to $C^{1,\frac{1}{1+\gamma}}$ in the interior (see, \cite[Corollary 3.2]{ART15}).

Considering all of the above, the problem we wish to address here can be summarized as follows: given $\gamma>0$, $F$ a convex or concave uniformly elliptic operator satisfying assumptions \eqref{UnifEllip} and \eqref{eqxdep} (related to the $x$ dependence) in Section \ref{ssec.defandmain}\footnote{For sake of  simplicity, we will make the normalizing assumption that $F(x, 0) = 0$ for all $x \in B_1$, which does not entail any loss of generality. (cf. \cite{BLOP} and \cite{DSV}).}, $g\in C(\partial B_1)$ and $\phi\in C^{1,\alpha}(\overline B_1)$ with $\alpha\in\left(0,1\right]$ (and $\phi<g$ on $\partial B_1$) and $u$ a viscosity solution of \eqref{Eq1}. 
How smooth is such a solution? Our (sharp) result says that $u\in C^{1,\beta}\left(B_{1/2}\right)$ for
\begin{equation}\label{eq.beta}
\beta\defeq \min\left\{\alpha,\frac{1}{1+\gamma}\right\}.
\end{equation}

\subsection{Definitions and main results}\label{ssec.defandmain}

In this section we present some definitions that are needed and present the main results of the paper. First, we recall that, for second order operators, \textit{uniform ellipticity} means that for any pair of matrices $X,Y\in Sym(n)$
\begin{equation}\label{UnifEllip}
    \mathcal{M}_{\lambda, \Lambda}^-(X-Y)\leq F(x, X)-F(x, Y)\leq\mathcal{M}_{\lambda, \Lambda}^+(X-Y)
\end{equation}
where $\mathcal{M}_{\lambda, \Lambda}^-$ and $\mathcal{M}_{\lambda, \Lambda}^+$ are the \textit{Pucci extremal operators} given by
\[
   \mathcal{M}_{\lambda, \Lambda}^-(X)=\lambda\sum_{e_i>0}e_i+\Lambda\sum_{e_i<0}e_i\quad\textrm{ and }\quad \mathcal{M}_{\lambda, \Lambda}^+(X)=\Lambda\sum_{e_i>0}e_i+\lambda\sum_{e_i<0}e_i
\]
for some \emph{ellipticity constants} $0<\lambda\leq \Lambda< \infty$ (here $\{e_i\}_i$ are the eigenvalues of $X$).

In the sequel, it is necessary to impose some restriction in the behavior of the ``coefficients'' our equations, that is the $x$ dependence of the operators we are going to consider. Following \cite{ART15} and \cite{BeDe14} we are going to prescribe that there is a uniform modulus of continuity of the ``coefficients'':
\begin{equation}\label{eqxdep}
\Theta_F(x, y) \defeq \sup_{\substack{X \in  Sym(n) \\ X \neq 0}}\frac{|F(x,X)-F(y,X)|}{\|X\|}\leq C_F\omega(|x-y|)
\end{equation}
where $C_F>0$ is some constant and $\omega$ is some modulus of continuity, i.e. a positive increasing function in $\R^+$ satisfying
\[
\lim_{r\rightarrow0^+} \omega(r)=0.
\]

Let us now review the definition of viscosity solution for our operators. For $G: B_1 \times \R^n \times Sym(n) \longrightarrow \R$ and $f: B_1 \to \R$ continuous functions we have the following definition (for $G$ a non-decreasing function with respect to the partial order of symmetric matrices):

\begin{defi}[{\bf Viscosity solutions, \cite{ART15} and \cite{BeDe14}}] $u \in C^{0}(B_1)$ is a viscosity super-solution (resp. sub-solution) to
$$
 G(x, Du, D^2 u) = f(x) \quad \mbox{in} \quad B_1
$$
if for every $x_0 \in B_1$ we have the following:  $\forall$ $\varphi \in C^2(B_1)$ such that $u-\phi$ has a local minimum at $x_0$ there holds
$$
      G(x_0, D\varphi(x_0), D^2 \varphi(x_0)) \leq  f(x_0) \quad (resp. \,\, \geq f(x_0)).
$$
\end{defi}

As measure of the smoothness of solutions along free boundary points, we are going to use the following norm:
\begin{defi}[{\bf $C^{1, \alpha}$ norm}] For $\alpha \in (0,1]$, $C^{1, \alpha}(B_1)$ denotes the space of $u$ whose spacial gradient $Du(x)$ there exists in the classical sense for every $x\in B_1$ and such that
$$
\|w\|_{C^{1, \alpha}(B_1)} \defeq \|w\|_{L^{\infty}(B_1)} + \|Dw\|_{L^{\infty}(B_1)}
		 	+ \displaystyle \sup_{\substack{x,y\in B_1 \\x \not= y}} \frac{|w(x)-[w(y)-D w(y)\cdot (x-y)]|}{|x -y|^{1+\alpha}}
$$
is finite. It is easy to verify that $w \in C^{1, \alpha}(B_1)$ implies every component of $Dw$ is $C^{0, \alpha}(B_1)$.
\end{defi}

Since we are interested on the regularity at free boundary points, we will often assume without loss of generality that $0\in\partial\{u>\phi\}$ and perform our estimates there.

Finally, a rather weak non-degeneracy property is going to be imposed on the obstacle throughout the paper:
\begin{equation}\label{eq.assumption}
0\leq |D\phi|^{\gamma}F(x, D^2 \phi) \leq \|f\|_{L^{\infty}(B_1)} \quad \text{in} \quad B_1
\end{equation}
in the viscosity sense. It is worth mentioning that the right hand side of this inequality is used just in the Improvement of Flatness Lemma \ref{lem.flat} and could be dispensed for the existence theory, whereas the left hand side is necessary for using the Comparison Principle in the existence theory laid out in the Appendix and is not needed for regularity purposes.

Now we are in a position to state our main results. The first result establishes an optimal growth estimate at free boundary points. In effect, it states that if the obstacle is $C^{1,\alpha}(B_1)$ smooth and the source term is bounded, then any bounded viscosity solution to the $(F, \gamma,\phi, f)$-obstacle problem in $B_1$ is $C^{1, \min\left\{\alpha, \frac{1}{\gamma+1}\right\}}$ at free boundary points.

\begin{thm}[{\bf Regularity along free boundary points}]\label{main} Suppose that the assumptions \eqref{UnifEllip} and \eqref{eqxdep} are in force for a convex or concave operator $F$ and let $\alpha\in(0,1]$. Let $u$ be a bounded viscosity solution to the $(F, \gamma,\phi, f)$-obstacle problem with obstacle $\phi \in C^{1, \alpha}(B_1)$ satisfying the second inequality of \eqref{eq.assumption} and $f \in L^\infty(B_1) \cap C^0(B_1)$.

Then, $u$ is $C^{1, \beta}(B_{1/2})$, and in particular at free boundary points, for $\beta$ satisfying \eqref{eq.beta}. More precisely, for any point $x_0 \in \partial \{u>\phi\} \cap B_{1/2}$ there holds
{\scriptsize{
\begin{equation}\label{EqSRE}
       \displaystyle \sup_{B_r(x_0)} \frac{|u(x)-(u(x_0)+ D u(x_0)\cdot (x-x_0))|}{r^{1+\beta}}\leq C.\left[\|u\|_{L^{\infty}(B_1)} + \left(\|\phi\|_{C^{1, \alpha}(B_1)}^{\gamma+1}+\|f\|_{L^{\infty}(B_1)}\right)^{\frac{1}{\gamma+1}}\right]
\end{equation}
}}
for $0<r < \frac{1}{2}$ where $C>0$ is a universal constant.

In particular,
\begin{equation}\label{detach}
\sup_{B_r(x_0)} \frac{u(x)-\phi(x)}{r^{1+\beta}}\leq C^{\ast}.\left[\|u\|_{L^{\infty}(B_1)} + \left(\|\phi\|_{C^{1, \alpha}(B_1)}^{\gamma+1}+\|f\|_{L^{\infty}(B_1)}\right)^{\frac{1}{\gamma+1}}\right],
\end{equation}
where $C^{\ast}>0$ is a universal constant, i.e. $u$ detaches from the obstacle at the speed dictated by the obstacle's modulus of continuity.
\end{thm}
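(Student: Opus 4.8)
The plan is to prove Theorem~\ref{main} by the geometric/flatness approach announced in the introduction, reducing the obstacle problem to a sequence of rescaled problems to which an Improvement of Flatness Lemma applies. I would first perform a preliminary normalization: since $0 \in \partial\{u>\phi\}$ (assumed WLOG), I replace $u$ by $v \defeq u - [\phi(0) + D\phi(0)\cdot x]$, which has $v(0) = 0$ and $Dv(0)$ controlled, and then divide by the constant $K \defeq \|u\|_{L^\infty(B_1)} + \bigl(\|\phi\|_{C^{1,\alpha}(B_1)}^{\gamma+1} + \|f\|_{L^\infty(B_1)}\bigr)^{1/(\gamma+1)}$ so that we may assume $\|u\|_{L^\infty(B_1)} \le 1$, $\|\phi\|_{C^{1,\alpha}} \le 1$ and $\|f\|_{L^\infty} \le 1$; the homogeneity of the operator $|Du|^\gamma F(x,D^2u)$ (degree $\gamma+1$ under the scaling $u \mapsto u(rx)/r^{1+\beta}$, using $\beta \le \tfrac{1}{1+\gamma}$) is exactly what makes this normalization preserve the class of equations, with the source term staying bounded.

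The core is an iterative scheme producing affine functions $\ell_k(x) = a_k + b_k \cdot x$ such that
\begin{equation*}
\sup_{B_{\rho^k}} |u - \ell_k| \le \rho^{k(1+\beta)}, \qquad |a_{k+1} - a_k| + \rho^k |b_{k+1} - b_k| \le C\rho^{k(1+\beta)},
\end{equation*}
for a suitable small radius $\rho \in (0,1)$ fixed by the flatness lemma. The inductive step splits according to whether, at the given scale, $u$ is ``close'' to the obstacle or ``far'' from it. In the region where $u$ is genuinely detached (i.e. the rescaled $u$ stays a definite amount above $\phi$ in $B_{\rho^k}$), the equation is simply $|Du|^\gamma F(x,D^2u) = f$ there, and I invoke the interior $C^{1,\beta}$ estimate of Ara\'ujo--Ricarte--Teixeira (\cite[Theorem 3.1]{ART15}) directly, with $\beta < \min\{\alpha_F, \tfrac{1}{1+\gamma}\}$; if $\beta = \alpha$ is the binding exponent this is the regularity of the obstacle feeding in. In the region where $u$ touches or nearly touches $\phi$, I use that $w \defeq u - \phi \ge 0$ is a nonnegative viscosity solution (in the appropriate sense) of an equation with bounded right-hand side — here the assumption $|D\phi|^\gamma F(x,D^2\phi) \le \|f\|_{L^\infty}$ in \eqref{eq.assumption} is used to control the ``obstacle contribution'' — and apply the Improvement of Flatness Lemma (Lemma~\ref{lem.flat}) to upgrade a flatness hypothesis at scale $\rho^k$ to flatness at scale $\rho^{k+1}$ with a new affine correction. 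Summing the geometric series for $(a_k, b_k)$ shows they converge to $(u(0), Du(0))$ — in particular $Du(0)$ exists — and passing $\rho^k \to r$ by interpolating between consecutive scales gives the pointwise estimate \eqref{EqSRE} with the constant $K$ reinstated after undoing the normalization.

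Finally, \eqref{detach} follows from \eqref{EqSRE} together with the $C^{1,\alpha}$ regularity of $\phi$ at $x_0$: write $u(x) - \phi(x) = \bigl[u(x) - (u(x_0) + Du(x_0)\cdot(x-x_0))\bigr] + \bigl[(u(x_0)+Du(x_0)\cdot(x-x_0)) - \phi(x)\bigr]$; the first bracket is bounded by $CKr^{1+\beta}$ by \eqref{EqSRE}, and for the second, since $u(x_0) = \phi(x_0)$ and $Du(x_0) = D\phi(x_0)$ at a free boundary point (the solution is $C^1$ and touches the obstacle there), it equals $-\bigl[\phi(x) - (\phi(x_0) + D\phi(x_0)\cdot(x-x_0))\bigr]$, which is $O(\|\phi\|_{C^{1,\alpha}} r^{1+\alpha}) = O(K r^{1+\beta})$ because $\beta \le \alpha$. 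Adding the two estimates yields \eqref{detach} with a universal $C^\ast$. The main obstacle I anticipate is the dichotomy-driven inductive step: one must verify that the flatness hypothesis is genuinely propagated across scales in the ``contact'' regime — in particular that the flatness lemma's smallness requirement is met at every step, which forces a careful choice of $\rho$ and of the universal flatness threshold before the iteration starts — and that the two regimes (detached vs.\ contact) can be handled with a single, scale-independent constant so that the telescoping sum converges; making the bookkeeping of the accumulated affine corrections compatible with the degenerate scaling is the delicate point.
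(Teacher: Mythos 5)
Your proposal has the right general flavor (normalization, an improvement-of-flatness lemma, an iteration on $\rho$-adic balls, a dichotomy), but the core iterative scheme you propose is precisely the one this paper argues is \emph{not} available here. You set up affine corrections $\ell_k(x)=a_k+b_k\cdot x$ with $\sup_{B_{\rho^k}}|u-\ell_k|\le\rho^{k(1+\beta)}$; however the operator $v\mapsto |Dv|^{\gamma}F(x,D^2v)$ is not invariant under subtraction of affine functions, so the rescaled profile $v_k(x)=(u-\ell_k)(\rho^k x)/\rho^{k(1+\beta)}$ satisfies an equation of the form $|Dv_k+\overrightarrow{q}_k|^{\gamma}F_k(x,D^2v_k)=f_k\chi_{\{\cdot\}}$ with a gradient shift $\overrightarrow{q}_k=\rho^{-k\beta}b_k$ whose magnitude is a priori uncontrolled; one cannot simply re-enter the hypotheses of the flatness lemma at the next scale (this is exactly the obstruction pointed out in the introduction). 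The paper's way around this is to iterate the raw oscillation instead: Corollary \ref{c3.1} gives $\sup_{B_\rho}|u-u(0)|\le\rho^{1+\beta}+\rho|Du(0)|$, Lemma \ref{lem.dy} propagates this to $\sup_{B_{\rho^k}}|u-u(0)|\le\rho^{k(1+\beta)}+|Du(0)|\sum_{j=0}^{k-1}\rho^{k+j\beta}$ (here the rescaled $v_k$ solves a \emph{bona fide} equation of the original type, with no affine shift), and Lemma \ref{l3.3} converts this into $\sup_{B_r}|u-u(0)|\le Mr^{1+\beta}(1+|Du(0)|r^{-\beta})$. Your preliminary step of subtracting $\phi(0)+D\phi(0)\cdot x$ already runs into the same problem.

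The second issue is your dichotomy. You split into a ``detached'' regime, where you would invoke the unconstrained estimate of \cite[Theorem 3.1]{ART15}, and a ``contact'' regime. But the estimate \eqref{EqSRE} is centered at a free boundary point $x_0\in\partial\{u>\phi\}$, so \emph{every} ball $B_{\rho^k}(x_0)$ contains contact points and the equation $|Du|^{\gamma}F=f$ never holds in a full ball around $x_0$; the ``detached'' branch as you describe it is vacuous at the points where the theorem is asserted. The operative dichotomy in the paper is on the size of the gradient at the center: if $|Du(0)|\le r^\beta$ the estimate follows from Lemma \ref{l3.3}; if $|Du(0)|>r^\beta$ one rescales at $r_0=|Du(0)|^{1/\beta}$, uses Theorem \ref{GradThm} to show $|Du_{r_0}|$ is bounded away from zero, whence the problem becomes a \emph{uniformly elliptic obstacle problem} (still with the constraint, not the unconstrained equation), and one invokes \cite[Theorem 5.4]{BLOP}. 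Your final derivation of \eqref{detach} from \eqref{EqSRE} using $Du(x_0)=D\phi(x_0)$ and $\beta\le\alpha$ is correct and matches the paper.
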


It is noteworthy that our contributions extend/generalize regarding non-variational scenario, former  results (sharp regularity estimates) from \cite{BlankTeka}, and to some extent, of those from \cite{L} and \cite{LS01} (see also \cite{ALS15} for a variational treatment) by making use of different approaches and techniques adapted to the framework of the fully nonlinear (anisotropic) models. Moreover, we also generalize, to some degree, the results by Figalli and Shahgholian \cite{FS} and Indrei and Minne \cite{IM} to the degenerate fully nonlinear context when $\Omega$ (in their setting) equals $\{u>\phi\}$ and $u\geq\phi$ is imposed. A rather interesting open problem is to obtain results as in the cited papers without the constraint $u\geq\phi$.

Furthermore, to the best of the authors' knowledge, the results presented here comprise the first known results of obstacle type problems ruled by degenerate equations in non-divergence form, and they are new even for the simplest operator $\mathcal{G}[u] \defeq |Du|^\gamma\Delta u$.

As an interesting consequence, we obtain the sharp quadratic behavior (along free boundary points) for viscosity solutions to the homogeneous obstacle problem (cf. \cite{L}).

\begin{cor}\label{CorC1,1} Suppose that the assumptions of Theorem \ref{main} are in force with $f \equiv 0$ and $\phi \in C^{1, 1}(B_1)$. Then, for any point $x_0 \in \partial \{u>\phi\} \cap B_{\frac{1}{2}}$ and $0<r\ll1$
$$
\sup_{B_r(x_0)} \frac{u(x)-(u(x_0)+ D u(x_0)\cdot (x-x_0))}{r^{2}}\leq C_{\ast}.\left[\|u\|_{L^{\infty}(B_1)} + \|\phi\|_{C^{1, 1}(B_1)}\right].
$$
Particularly,
$$
\sup_{B_r(x_0)} \frac{u(x)-\phi(x)}{r^{2}}\leq C_{\sharp}.\left[\|u\|_{L^{\infty}(B_1)} + \|\phi\|_{C^{1, 1}(B_1)}\right],
$$
i.e. $u$ leaves the obstacle in a quadratic fashion.
\end{cor}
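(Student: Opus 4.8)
The plan is to reduce the Corollary to the two one–sided bounds
\[
-\,C r^{2}\ \le\ u(x)-\ell(x)\ \le\ C r^{2}\qquad\text{on }B_r(x_0),
\]
where $\ell(x):=u(x_0)+Du(x_0)\cdot(x-x_0)$ is the tangent plane of $u$ at the free boundary point, and to obtain the \emph{upper} bound by revisiting the proof of Theorem \ref{main} in the degenerate–free case $f\equiv0$. Fix $x_0\in\partial\{u>\phi\}\cap B_{1/2}$. Since $u\ge\phi$ in $B_1$, $u(x_0)=\phi(x_0)$, and both functions are differentiable at $x_0$, the nonnegative function $u-\phi$ has an interior minimum at $x_0$, so $Du(x_0)=D\phi(x_0)$ and $\ell$ is simultaneously the tangent plane of $\phi$ at $x_0$. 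Because $\phi\in C^{1,1}(B_1)$ this immediately gives the lower bound
\[
u(x)-\ell(x)\ \ge\ \phi(x)-\ell(x)\ \ge\ -\tfrac12\|D^{2}\phi\|_{L^{\infty}(B_1)}\,|x-x_0|^{2},\qquad x\in B_r(x_0),
\]
and it also reduces the ``in particular'' statement to the upper bound: $u-\phi=(u-\ell)-(\phi-\ell)\le(u-\ell)+\tfrac12\|D^{2}\phi\|_{L^{\infty}(B_1)}|x-x_0|^{2}$, so any quadratic bound for $u-\ell$ passes to $u-\phi$ (absorbing a $\tfrac12\|D^2\phi\|_\infty$ into the constant).

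For the upper bound I would scrutinize the geometric iteration behind Theorem \ref{main}. There the exponent $\beta=\min\{\alpha,\tfrac1{1+\gamma}\}$ comes from two competing mechanisms feeding the Improvement of Flatness Lemma \ref{lem.flat} and the dyadic iteration it supports: the $C^{1,\alpha}$ smoothness of the obstacle (a term of order $r^{1+\alpha}$) and the optimal interior regularity $C^{1,\frac1{1+\gamma}}$ of solutions of $|Du|^{\gamma}F(x,D^{2}u)=\text{(bounded)}$ (cf.\ \cite{ART15}, \cite{IS13}), responsible for a term of order $r^{1+\frac1{1+\gamma}}$. When $f\equiv0$ the second mechanism switches off: on the non-coincidence set $u$ solves the \emph{homogeneous} equation $|Du|^{\gamma}F(x,D^{2}u)=0$, which does not carry the $\frac1{1+\gamma}$ penalty, and the frozen profiles one compares against in the compactness step solve the uniformly elliptic, constant–coefficient equation $F(x_0,D^{2}v)=0$ with $F$ convex or concave, hence are $C^{2,\alpha'}$ by Evans--Krylov and in particular $C^{1,1}$. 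Thus the flatness improvement — and the whole iteration — can be run with exponent $1+\alpha$ for any $\alpha\le1$, and taking $\alpha=1$ (licit since $\phi\in C^{1,1}$) one produces $\rho\in(0,1)$, a universal $C>0$, and affine functions $\ell_k$ with
\[
\sup_{B_{\rho^{k}}(x_0)}\bigl|u-\ell_k\bigr|\ \le\ C\,\rho^{2k}\,M,\qquad M:=\|u\|_{L^{\infty}(B_1)}+\|\phi\|_{C^{1,1}(B_1)},
\]
and with $\ell_{k+1}-\ell_k$ summably small; the $\ell_k$ converge to $\ell$, and summing the geometric series yields $\sup_{B_r(x_0)}|u-\ell|\le C_{\ast}M r^{2}$ for $0<r\ll1$, which is the stated estimate, whence also $\sup_{B_r(x_0)}(u-\phi)\le C_{\sharp}M r^{2}$.

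The hard part is precisely reaching the \emph{endpoint} exponent $2$ rather than $1+\alpha$ with $\alpha<1$. Two points must be secured. First, that the limiting equation carries a genuine $C^{1,1}$ a priori estimate: this is exactly where convexity/concavity of $F$ is indispensable, via Evans--Krylov. Second, and more delicate, one must control the behaviour across the degeneracy set $\mathcal{S}_0=\{|Du|=0\}$, which near a free boundary point with $D\phi(x_0)=0$ is unavoidably met (there $Du(x_0)=0$ as well); one has to check that the barriers used in Lemma \ref{lem.flat} still deliver quadratic control there. This is the spot where the quadratic detachment of the \emph{obstacle} — available only because $\phi\in C^{1,1}$ — is used to pin $u$ from below while the homogeneous equation pins it from above, so that the two sides match at the order $r^{2}$. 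Once these points are in place, the compactness/contradiction argument underpinning Lemma \ref{lem.flat} closes as in Theorem \ref{main}, and Corollary \ref{CorC1,1} follows.
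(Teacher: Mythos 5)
Your central observation is the correct one, and it is in fact the (unwritten) argument the paper intends: the corollary is not a consequence of the \emph{statement} of Theorem \ref{main} (which with $\alpha=1$ would only give the exponent $1+\frac{1}{\gamma+1}<2$), but of its \emph{proof}, where the only place the cap $\beta\le\frac{1}{\gamma+1}$ is imposed is the rescaling of the source term --- $f_k=\rho^{k(\gamma+2)}f(\rho^k\cdot)/(\rho^{k(1+\beta)}+\cdots)^{\gamma+1}$ in Lemma \ref{lem.dy} and $f_{r_0}=r_0^{1-\beta(\gamma+1)}f(r_0\cdot)$ in Case 2 --- and these are identically zero when $f\equiv0$, so the obstacle's regularity alone drives the exponent and one may take $\beta=\alpha=1$. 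Your reduction of the two-sided bound to the upper bound via $Du(x_0)=D\phi(x_0)$ and the $C^{1,1}$ Taylor expansion of $\phi$ is also correct (and more than is needed, since the stated estimate has no absolute value).

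There are, however, two inaccuracies in how you describe the mechanism. First, you run the iteration with affine corrections $\ell_k$ whose increments are summably small; this is exactly the scheme the paper explicitly rules out for this operator, because $v\mapsto|Dv|^{\gamma}F(x,D^2v)$ is not invariant under subtraction of affine functions and one does not control the equation satisfied by $(u-\ell_k)(\rho^k\cdot)/\rho^{k(1+\beta)}$. The actual argument iterates the oscillation $\sup_{B_{\rho^k}}|u-u(0)|\le\rho^{k(1+\beta)}+|Du(0)|\sum_j\rho^{k+j\beta}$ (Lemmas \ref{lem.dy} and \ref{l3.3}) and only afterwards recovers the tangent-plane estimate through the dichotomy $|Du(x_0)|\le r^{\beta}$ versus $|Du(x_0)|>r^{\beta}$; your sketch does not engage with this dichotomy, and in the non-singular regime the endpoint $\beta=1$ requires genuine $C^{1,1}$ estimates for the uniformly elliptic convex obstacle problem (as in \cite{L}, \cite{FS}) rather than the $C^{1,\beta}$, $\beta<1$, statement of \cite[Theorem 5.4]{BLOP} invoked in the proof of Theorem \ref{main} --- a point worth making explicit at the endpoint. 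Second, the compactness step in the Improvement of Flatness Lemma \ref{lem.flat} does not use Evans--Krylov $C^{2,\alpha}$ regularity of the limit profiles: it uses the Cutting Lemma \ref{cutting} to pass to $F_0(D^2\phi)\le0\le F_0(D^2u)$ and then the strong maximum principle for the Pucci operator applied to $u-\phi$; convexity of $F$ enters elsewhere. Neither point invalidates your conclusion, but as written the proof of the upper bound rests on an iteration scheme that is not available for this degenerate operator.
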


\begin{rem}[{\bf Sharpness Theorem \ref{main}}]

It is interesting to point out that the degenerate nature of the operator entails specific difficulties that are not present in the uniformly elliptic case. As a matter of fact, our problem essentially different from the problem
\[
  \min\left\{f-|Du|^\gamma F(D^2u),u-\phi\right\}  = 0 \quad \text{in} \quad B_1
\]
studied in \cite{DSV} and, unlike that case, one cannot expect solutions of \eqref{Eq1} to be in $C^{1, 1}$ even when the data is smooth. Indeed, for any fixed $0<r<1$ the radially symmetric function $v: B_1\to \R$ given by
$$
 \,v(x)=(|x|-r)_+^{\,\frac{\gamma+2}{\gamma+1}}
$$
satisfies (in the viscosity sense)
$$
   \left \{
       \begin{array}{rllll}
           |D v|^{\gamma}\Delta v & = & f(x)\chi_{\{v>0\}} & \text{ in } & B_1 \\
           v(x) & \geq & 0 &\text{ in } &  B_1 \\
           v(x) & = & (1-r)^{\,\frac{\gamma+2}{\gamma+1}} &\text{ on } & \partial B_1 \\
           v(x) & = & 0 &\text{ in } &  \overline{B_r}
       \end{array},
   \right.
$$
where
$$
f(x)\defeq\left\{
\begin{array}{rcl}
  \left(\frac{\gamma+2}{\gamma+1}\right)^{\gamma+1}\left(\frac{1}{1+\gamma}+(n-1)\left(1-\frac{r}{|x|}\right)\right) & \text{if} & r<|x|<1 \\
  \frac{(\gamma+2)^{\gamma+1}}{(\gamma+1)^{\gamma+2}} & \text{if} & |x|\leq r
\end{array}
\right.
$$
Note that $v\in C^{1,\frac{1}{\gamma+1}}$, however $v\notin C^{1,\frac{1}{\gamma+1}+\varepsilon}$ for any $\varepsilon>0$, and in particular is not $C^{1,1}$, except in the case when $\gamma = 0$. These features highlight both the sharpness of our results and the intrinsic restrictions poised by the degenerate character of the problem.
\end{rem}

As a consequence of the previous theorem \ref{main}, we also find the sharp rate at which gradient grows away from free boundary points.

\begin{thm}[{\bf Sharp gradient growth}]\label{sharpgrad} Suppose that the assumptions of Theorem \ref{main} are in force. Let $u$ be a bounded viscosity solution to the $(F, \gamma,\phi, f)$-obstacle problem. Then, for any point $x_0 \in \partial \{u > \phi\} \cap B_{1/2}$ there exists a universal constant $C>0$ such that for all $0<r < \frac{1}{4}$
$$
   \displaystyle \sup_{B_r(x_0)} \frac{|D u(x)-Du(x_0)|}{r^{\beta}} \leq  C.\left[\|u\|_{L^{\infty}(B_1)} + \left(\|\phi\|_{C^{1, \alpha}(B_1)}^{\gamma+1}+\|f\|_{L^{\infty}(B_1)}\right)^{\frac{1}{\gamma+1}} \right].
$$

In particular
$$
  \displaystyle  |D u(y)-Du(x_0)| \leq  C.\dist(x_0, \partial \{u>\phi\})^{\beta}\quad\textrm{ for any } y\in B_r(x_0).
$$
\end{thm}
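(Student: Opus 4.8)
The plan is to deduce Theorem \ref{sharpgrad} directly from the pointwise $C^{1,\beta}$ estimate \eqref{EqSRE} of Theorem \ref{main}, which already controls the second-order Taylor remainder of $u$ at \emph{every} free boundary point. The key observation is that a uniform bound on the Taylor remainder around a \emph{family} of base points — here all of $\partial\{u>\phi\}\cap B_{1/2}$ — upgrades to a H\"older bound on the gradient itself, via a standard interpolation/dyadic argument. So the first step is to fix $x_0\in\partial\{u>\phi\}\cap B_{1/2}$ and set $\rho\defeq\dist(x_0,\partial\{u>\phi\})$; if $\rho=0$ there is nothing to do beyond \eqref{EqSRE}, so assume $\rho>0$ and let $z\in\partial\{u>\phi\}$ realize the distance, $|x_0-z|=\rho$.

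Next I would write the telescoping identity for $Du(x_0)-Du(z)$ along the segment joining $z$ to $x_0$. Concretely, for any $y$ with $|y-x_0|<r$ one has, using the definition of the $C^{1,\beta}$ seminorm twice (centered at $z$ and evaluated at two nearby points, or equivalently a second-difference estimate), that
\begin{equation*}
|Du(x_0)-Du(z)|\leq C\,\mathcal{A}\,|x_0-z|^{\beta}=C\,\mathcal{A}\,\rho^{\beta},
\end{equation*}
where $\mathcal{A}$ denotes the bracketed universal quantity $\|u\|_{L^\infty(B_1)}+\bigl(\|\phi\|_{C^{1,\alpha}(B_1)}^{\gamma+1}+\|f\|_{L^\infty(B_1)}\bigr)^{\frac{1}{\gamma+1}}$ from \eqref{EqSRE}. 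This follows because \eqref{EqSRE} applied at the free boundary point $z$ says that $u$ stays within $C\mathcal{A}\,t^{1+\beta}$ of its tangent plane at $z$ on $B_t(z)$; comparing the tangent planes of $u$ at $z$ and at $x_0$ on a ball of radius comparable to $\rho$ forces their linear parts to differ by at most $C\mathcal{A}\rho^{\beta}$ (the affine functions $L_z$, $L_{x_0}$ satisfy $|L_z-L_{x_0}|\leq C\mathcal{A}\,t^{1+\beta}$ on $B_t$ for $t\sim\rho$, and a linear function bounded by $\varepsilon$ on a ball of radius $t$ has slope at most $\varepsilon/t$). Then for $0<r<\tfrac14$ and any $y\in B_r(x_0)$, again invoking \eqref{EqSRE} at $z$ on the ball $B_{\rho+r}(z)\supset B_r(x_0)$ and differentiating the remainder bound (or using the second-difference characterization of $C^{1,\beta}$), I get $|Du(y)-Du(x_0)|\leq C\mathcal{A}\,(\rho+r)^{\beta}$ when $r\lesssim\rho$, and the interior $C^{1,\beta}$ estimate away from the free boundary when $r\gtrsim\rho$; either way $\sup_{B_r(x_0)}|Du(y)-Du(x_0)|\leq C\mathcal{A}\,r^{\beta}$, which is the claimed inequality. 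The ``in particular'' statement is then immediate: take $y$ on the ray from $x_0$ towards $z$, or simply note $|Du(y)-Du(x_0)|\leq C\mathcal{A}\,\dist(x_0,\partial\{u>\phi\})^{\beta}$ whenever $B_r(x_0)$ does not cross the free boundary, i.e. $r\leq\rho$.

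The only genuinely delicate point is the passage from ``the second-order remainder is uniformly small at every free boundary point'' to ``the gradient is H\"older with the same exponent everywhere in $B_{1/2}$,'' because a priori \eqref{EqSRE} is anchored only at free boundary points, not at interior points of $\{u>\phi\}$. The hard part will be to interpolate between the free-boundary estimate and the interior regularity for $|Du|^\gamma F(x,D^2u)=f$ (available from \cite{ART15}, Theorem 3.1, giving $C^{1,\beta}$ with the same $\beta$ when $\phi\in C^{1,\alpha}$ and $F$ is convex/concave) so that the resulting constant stays universal and the exponent does not deteriorate at the matching scale $r\sim\rho$. This is handled by the usual dichotomy on whether $B_r(x_0)$ intersects the free boundary, together with a covering of the ``interior'' piece by balls of radius $\sim\rho$ centered along $\partial\{u>\phi\}$; once that bookkeeping is done, the estimate of Theorem \ref{sharpgrad} drops out with $C$ depending only on $n,\lambda,\Lambda,\gamma,\alpha$ and the structural constants.
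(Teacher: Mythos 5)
Your overall strategy --- deduce the gradient growth from the pointwise Taylor--remainder estimate \eqref{EqSRE} plus interior regularity in $\{u>\phi\}$, matched by a dichotomy at the scale of the distance to the free boundary --- is a legitimate route, but as written it has a genuine gap: the step you label ``bookkeeping'' is in fact the entire content of the theorem, and two of your intermediate claims are problematic. First, since $x_0\in\partial\{u>\phi\}$ by hypothesis, your quantity $\rho=\dist(x_0,\partial\{u>\phi\})$ is always zero, and the assertion that in this case ``there is nothing to do beyond \eqref{EqSRE}'' is false: a pointwise bound $|u(x)-u(x_0)-Du(x_0)\cdot(x-x_0)|\leq C\mathcal{A}|x-x_0|^{1+\beta}$ anchored at the single point $x_0$ does not control $\sup_{B_r(x_0)}|Du(x)-Du(x_0)|$ (compare $t\mapsto t^2\sin(1/t)$, which is pointwise $C^{1,1}$ at the origin while its derivative does not even converge there). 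Your tangent-plane comparison correctly yields $|Du(z)-Du(x_0)|\leq C\mathcal{A}|z-x_0|^{\beta}$ for two \emph{free boundary} points, but for $y$ in the interior of $\{u>\phi\}$ at distance $d$ from its nearest free boundary point $z$ you still must prove $|Du(y)-Du(z)|\leq C\mathcal{A}d^{\beta}$ with a constant uniform as $d\to0$, and this cannot be quoted off the shelf from \cite[Theorem 3.1]{ART15}: after subtracting the tangent plane $L_z$ (which is forced if you want the $L^{\infty}$ input from \eqref{EqSRE} at scale $d$), the tilted function solves $|Dv+\overrightarrow{q}|^{\gamma}F(\cdot,D^2v)=\tilde f$ with $\overrightarrow{q}=d^{-\beta}Du(z)$, because the operator is not invariant under adding affine functions. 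You never address this drift, which is precisely what Remark \ref{RemarkTransl} exists to handle.

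For comparison, the paper's proof does all of this in a single stroke: it sets $u_{r,x_0}(x)=r^{-(1+\beta)}\bigl(u(x_0+rx)-u(x_0)-rx\cdot Du(x_0)\bigr)$, observes that $u_{r,x_0}$ solves $|Du_{r,x_0}+\overrightarrow{q}_{r,x_0}|^{\gamma}F_{r,x_0}(x,D^2u_{r,x_0})=f_{r,x_0}\chi_{\{u_{r,x_0}>\phi_{r,x_0}\}}$ with bounded right-hand side (here $(\gamma+1)\beta\leq1$ is used), bounds $\|u_{r,x_0}\|_{L^\infty(B_{1/4})}$ by the universal quantity via Theorem \ref{main}, and then applies the interior gradient estimate of Theorem \ref{GradThm} in its drift version (Remark \ref{RemarkTransl}) at unit scale; scaling back gives the claim with no case analysis and no covering. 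If you want to salvage your decomposition, you must (i) drop the vacuous $\rho=0$ reduction, (ii) carry out the interior step by the rescaling just described centered at the nearest free boundary point $z$ at scale $\sim d$, explicitly invoking the gradient estimate for operators of the form $|Dv+\overrightarrow{q}|^{\gamma}F$, and (iii) handle $y$ in the contact set separately via $Du=D\phi$ there and $\phi\in C^{1,\alpha}$ with $\alpha\geq\beta$. Once those three items are written out, your argument closes, but it is strictly longer than the paper's.
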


An instrumental (geometric) interpretation to Theorem \ref{main} says the following: if $u$ solves an $(F, \gamma,\phi, f)$-obstacle problem and $x_0 \in \partial \{u>\phi\} \cap \{|Du|\lesssim r^{\beta}\}$ (``singular zone''), then near $x_0$ we obtain
$$
    \displaystyle \sup_{B_r(x_0)} |u(x)|\leq |u(x_0)|+C.r^{1+\beta},
$$

On the other hand, from a (geometric) regularity viewpoint, it is pivotal qualitative information to obtain the  (counterpart) sharp lower estimate. 
Such a weak geometric property is designated \textit{non-degeneracy} of solutions.

\begin{thm}[{\bf Non-degeneracy estimates}]\label{ThmNonDeg} Suppose that the assumptions of Theorem \ref{main} are in force. Let $u$ be a bounded viscosity solution to the $(F, \gamma,\phi, f)$-obstacle problem fulfilling $\displaystyle \mathfrak{m} \defeq \inf_{B_1} f(x)>0$. Given $x_0 \in \partial\{u>\phi\}$, there exists a constant $\mathfrak{c} = \mathfrak{c}(n, \mathfrak{m}, \lambda, \Lambda, \gamma)>0$, such that
$$
  \displaystyle \sup_{\partial B_r(x_0)} (u(x)-\phi(x_0))\geq \mathfrak{c}.r^{1+\frac{1}{\gamma+1}}\quad \text{for all} \quad  0<r < \frac{1}{2}.
$$
\end{thm}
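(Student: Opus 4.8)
The plan is the classical barrier/penalization argument adapted to the degenerate operator $|Du|^\gamma F(x,D^2u)$, exploiting that the equation is \emph{exactly} $f\chi_{\{u>\phi\}}$ with $f\geq\mathfrak{m}>0$ inside the non-coincidence set. First I would reduce to a point $x_0\in\{u>\phi\}$ near the free boundary by continuity, and then let $x_0\in\partial\{u>\phi\}$ by a limiting procedure. The heart of the matter is to build an explicit subsolution that detaches from the obstacle at the prescribed rate $r^{1+\frac{1}{\gamma+1}}$. Motivated by the sharpness example in the Remark after Theorem \ref{main}, the natural candidate is
\[
   \Psi(x) \defeq u(x_0) + Du(x_0)\cdot(x-x_0) + c\,|x-x_0|^{1+\frac{1}{\gamma+1}},
\]
or rather, since one wants to compare on $\partial B_r(x_0)$ and handle the obstacle, the auxiliary function $w(x)\defeq u(x)-\phi(x_0)-c|x-x_0|^{1+\frac{1}{\gamma+1}}$ evaluated on balls $B_r(x_0)\subset\subset\{u>\phi\}$.

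The key steps, in order, are: (1) For $x_0\in\{u>\phi\}$ with $d\defeq\dist(x_0,\partial\{u>\phi\})$, on $B_d(x_0)$ the function $u$ solves $|Du|^\gamma F(x,D^2u)=f\geq\mathfrak{m}>0$; compute that the radial profile $P_c(x)=c|x-x_0|^{1+\frac{1}{\gamma+1}}$ satisfies, for a suitable small universal $c=c(n,\mathfrak{m},\lambda,\Lambda,\gamma)>0$, the inequality $|DP_c|^\gamma\mathcal M^+_{\lambda,\Lambda}(D^2P_c)<\mathfrak{m}$ in $B_d(x_0)\setminus\{x_0\}$ — this is a direct computation since $|DP_c|\sim r^{\frac{1}{\gamma+1}}$, $\|D^2P_c\|\sim r^{\frac{1}{\gamma+1}-1}$, so $|DP_c|^\gamma\|D^2P_c\|\sim r^{0}$ is scale-invariant and can be made small. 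Hence $u-P_c$ is a viscosity supersolution of the Pucci operator $|D(\cdot)|^\gamma\mathcal M^+$ and the difference cannot attain an interior minimum, forcing $\sup_{\partial B_r(x_0)}(u-P_c)\geq (u-P_c)(x_0)=u(x_0)-\phi(x_0)>0$ for every $r<d$. (2) This gives $\sup_{\partial B_r(x_0)}u\geq u(x_0)+c r^{1+\frac{1}{\gamma+1}}$. (3) Now take $x_0\in\partial\{u>\phi\}$: pick $x_k\in\{u>\phi\}$ with $x_k\to x_0$, apply the previous bound on shrinking balls, use $u(x_k)\geq\phi(x_k)\to\phi(x_0)$ and continuity to pass to the limit, obtaining $\sup_{\partial B_r(x_0)}(u-\phi(x_0))\geq\mathfrak{c}\,r^{1+\frac{1}{\gamma+1}}$ for all $0<r<\tfrac12$, with $\mathfrak{c}$ slightly smaller than $c$.

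The main obstacle I anticipate is step (1): making the comparison rigorous in the \emph{viscosity} sense for the degenerate operator. One has to be careful that $P_c$ is not $C^2$ at the center $x_0$, so the comparison must be run on the punctured ball $B_r(x_0)\setminus B_\varepsilon(x_0)$ and then let $\varepsilon\to0$, or alternatively invoke the comparison principle for $|Du|^\gamma F$ (of the type used in the Appendix, under the left inequality of \eqref{eq.assumption}) between $u$ and $\phi(x_0)+P_c$, checking the boundary ordering on both $\partial B_r(x_0)$ and $\partial B_\varepsilon(x_0)$. A second delicate point is verifying that $c$ depends only on $n,\mathfrak m,\lambda,\Lambda,\gamma$ and not on $d$ or on $u$; this follows precisely because the quantity $|DP_c|^\gamma\|D^2P_c\|$ is homogeneous of degree zero in the radial variable, which is exactly why the exponent $1+\frac{1}{\gamma+1}$ (and no smaller one) is the right one — in other words, the same scaling that makes Theorem \ref{main} sharp makes this lower bound work. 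Everything else (the limiting argument in (3), the reduction to interior points) is routine.
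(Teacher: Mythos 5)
Your barrier and the scale-invariance observation behind it are exactly right: $|DP_c|^\gamma\|D^2P_c\|$ is homogeneous of degree zero precisely for the exponent $1+\frac{1}{\gamma+1}$, which is why $c$ can be chosen depending only on $n,\mathfrak{m},\lambda,\Lambda,\gamma$; this is the same computation that produces the paper's explicit constant, and the paper's proof is the same barrier-plus-comparison argument (run on $B_1$ after the rescaling $u_r(x)=u(x_0+rx)/r^{\frac{\gamma+2}{\gamma+1}}$). However, there is a genuine gap in \emph{where} you run the comparison. In step (1) you work on $B_d(x_0)$ with $d=\dist(x_0,\partial\{u>\phi\})$, so you only obtain the growth estimate for radii $r<d$. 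When you then send $x_k\to x_0\in\partial\{u>\phi\}$ in step (3), you have $d_k\to0$, so the radii for which steps (1)--(2) apply shrink to zero and the limiting procedure yields nothing for a fixed $r\in\left(0,\frac12\right)$. The whole content of the theorem is that the estimate holds on balls of \emph{every} radius up to $\frac12$, and such balls necessarily meet the coincidence set; restricting to balls contained in $\{u>\phi\}$ cannot capture this.

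The repair -- and what the paper actually does -- is to run the comparison on $B_r(x_0)\cap\{u>\phi\}$ for the full fixed radius $r$, whose boundary has two pieces: $\partial B_r(x_0)\cap\{u>\phi\}$ and $B_r(x_0)\cap\partial\{u>\phi\}$. One argues by contradiction: if $u\leq \Xi\defeq\phi(x_0)+P_c$ on all of $\partial\bigl(B_r(x_0)\cap\{u>\phi\}\bigr)$, then the Comparison Principle of Lemma \ref{comparison principle} (applicable because $\inf_{B_1} f=\mathfrak{m}>0$, $u$ solves the equation with right-hand side $f$ in the non-coincidence set, and $\Xi$ is a strict supersolution there) would give $u\leq\Xi$ at $x_0$, contradicting $u(x_0)>\phi(x_0)=\Xi(x_0)$; hence some $Y$ on that boundary satisfies $u(Y)>\Xi(Y)$, and since $u=\phi$ on the free-boundary portion the relevant growth is read off on $\partial B_r(x_0)$ and one scales back. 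Two smaller points: (i) your ``cannot attain an interior minimum'' should be ``maximum'' (you are comparing a subsolution with right-hand side $\geq\mathfrak{m}$ against a strict supersolution with right-hand side $<\mathfrak{m}$), and in the degenerate setting the difference $u-P_c$ is not itself a supersolution of a Pucci-type equation, so the comparison-principle route you list only as an alternative is in fact the one to take; (ii) the lack of $C^2$ regularity of $P_c$ at the vertex is harmless here for a reason specific to $\gamma>0$: since $DP_c(x_0)=0$, any $C^2$ test function touching $\Xi$ from below at $x_0$ has vanishing gradient there, so the degenerate factor $|D\varphi(x_0)|^\gamma$ annihilates the operator and the supersolution inequality holds trivially at the centre.
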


Furthermore, from a ``free boundary regularity'' perspective it is of use to have control of the decay of the function $u-\phi$ near free boundary points. These kind of result usually require a stronger non-degeneracy property on the obstacle. That is what we achieve in the next Theorem, where we prescribe the obstacle to be a strict super-solution:

\begin{thm}[{\bf Non-degeneracy away from the free boundary}]\label{ThmNonDegHom} Suppose that the assumptions of Theorem \ref{main} are in force. Let $u$ be a bounded viscosity solution to the $(F, \gamma,\phi, 0)$-obstacle problem with obstacle $\phi \in C^{1,1}(B_{1})$. Suppose further that 
\begin{equation}\label{eq.ob}
\displaystyle |D \phi|^{\gamma}F(x,D^2 \phi)\le -\Upsilon \quad \text{in} \quad B_1
\end{equation}
in the viscosity sense for some constant $\Upsilon>0$.

Given $x_0 \in \partial\{u>\phi\}\cap B_{1/2}$, there exists a universal constant $\mathfrak{c}>0$ such that
$$
  \displaystyle \sup_{\partial B_r(x_0)} (u(x)-\phi(x))\geq \mathfrak{c}.r^{2}\quad\text{for all} \quad  0<r < \frac{1}{2}.
$$
\end{thm}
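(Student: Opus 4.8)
The plan is to exploit the strict super-solution property \eqref{eq.ob} of the obstacle by constructing an explicit subsolution-type barrier that forces $u-\phi$ to grow quadratically. Fix $x_0\in\partial\{u>\phi\}\cap B_{1/2}$. Since $x_0$ is a free boundary point one can pick, for each small $r$, a point $y_0$ with $u(y_0)>\phi(y_0)$ arbitrarily close to $x_0$; by continuity it suffices to prove the estimate with $x_0$ replaced by such interior contact-complement points and then pass to the limit, so we may as well assume $u(x_0)=\phi(x_0)$ and work in the open set $\{u>\phi\}$. Inside $\{u>\phi\}$ the equation reads $|Du|^\gamma F(x,D^2u)=0$.

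The key step is the barrier. Consider
\[
  w(x)\defeq u(x)-\phi(x)-c|x-x_0|^{2}
\]
for a constant $c>0$ to be chosen small. Using the sublinearity/homogeneity structure of the operator together with \eqref{eq.ob}, I expect to show that $w$ is a viscosity subsolution of a degenerate elliptic inequality of the form $|Dw|^\gamma\mathcal{M}^+_{\lambda,\Lambda}(D^2w)\ge 0$ in $\{u>\phi\}\cap B_r(x_0)$, at least at points where $Dw\neq0$: indeed, on that set $D^2u$ contributes nonnegatively (since $|Du|^\gamma F(x,D^2u)=0$ and $Du\neq 0$ there, forcing $F(x,D^2u)=0$), $-D^2\phi$ contributes at least $+\Upsilon/|D\phi|^\gamma$ worth of ellipticity by \eqref{eq.ob} and the $C^{1,1}$ bound on $\phi$, and $-cD^2(|x-x_0|^2)=-2c\,\mathrm{Id}$ is a bounded perturbation absorbed by choosing $c=c(n,\lambda,\Lambda,\gamma,\Upsilon,\|\phi\|_{C^{1,1}})$ small enough. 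Hence $w$ cannot attain an interior maximum over $\overline{\{u>\phi\}\cap B_r(x_0)}$ in the interior of that set (the usual strong-maximum/no-interior-max argument for $\mathcal M^+$, valid also in the degenerate case since the set $\{Dw=0\}$ has empty interior on any would-be plateau, or alternatively by a small quadratic perturbation of $w$). Therefore $w$ attains its maximum on the boundary of $\{u>\phi\}\cap B_r(x_0)$. On the part of that boundary lying in $\partial\{u>\phi\}$ we have $u-\phi=0$, so $w\le 0$ there; since $w(x_0)=0$, the maximum of $w$ is $\ge 0$, and it must be attained on $\partial B_r(x_0)\cap\{u>\phi\}$. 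At that maximum point $z\in\partial B_r(x_0)$ we get $u(z)-\phi(z)\ge c|z-x_0|^2=cr^2$, hence $\sup_{\partial B_r(x_0)}(u-\phi)\ge cr^2$, which is the claim with $\mathfrak c=c$.

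The main obstacle I anticipate is the degeneracy on $\mathcal S_0=\{|Du|=0\}$: the differential inequality for the barrier is only clean where $Du\neq 0$, and one must rule out the maximum of $w$ occurring at a point where $Du=0$ (equivalently $Dw=-2c(x-x_0)$, which is nonzero for $x\neq x_0$, so $Du=0$ there means $D\phi(x)=2c(x-x_0)$ — a measure-zero-looking but not a priori empty set). The standard fix is to run the argument with a slightly steeper barrier, e.g. replace $|x-x_0|^2$ by $|x-x_0|^2+\varepsilon|x-x_0|^{1+\beta}$ or perturb $w$ by a strictly concave smooth function, so that at any interior maximum the test-function inequality can be invoked with a nonvanishing gradient (using that a $C^2$ test function touching from above at an interior max of $w$ has gradient equal to $Dw$ there, and choosing the perturbation so this gradient is forced nonzero); then let $\varepsilon\to 0$. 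A secondary point is the reduction to $u(x_0)=\phi(x_0)$ and the approximation from genuine detachment points, but this is routine given the continuity of $u$, $\phi$ and the openness of $\{u>\phi\}$. Once the barrier argument is in place, the universality of $\mathfrak c$ follows by tracking the dependence of $c$ only on $n,\lambda,\Lambda,\gamma,\Upsilon$ and $\|\phi\|_{C^{1,1}(B_1)}$, which under the normalizations of the theorem are universal.
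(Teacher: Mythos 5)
Your overall strategy --- a quadratic barrier built on the strict supersolution property \eqref{eq.ob}, compared against $u$ on $\{u>\phi\}\cap B_r(x_0)$, with the maximum pushed to $\partial B_r(x_0)$ --- is exactly the paper's idea. The paper, however, implements it by comparing $u$ directly with $v(x)=\phi(x)+\epsilon|x-y|^{2}$ (a strict supersolution of the degenerate equation for $\epsilon$ small) via the Comparison Principle of Theorem \ref{Thm comparison principle} on the domain $\{u>\phi\}\cap B_r(x_0)$, and then lets $y\to x_0$. Your version instead forms the difference $w=u-\phi-c|x-x_0|^2$ and tries to derive a viscosity differential inequality for it, and this is where the genuine gap lies: for viscosity solutions of a \emph{degenerate} equation one cannot simply subtract the inequalities satisfied by $u$ and $\phi$ to conclude that $w$ is a viscosity subsolution of $|Dw|^\gamma\mathcal{M}^+_{\lambda,\Lambda}(D^2w)\ge 0$; making sense of such a subtraction is essentially the content of the comparison principle itself (doubling of variables, Jensen--Ishii), which is why the paper invokes it as a black box. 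Moreover, even granting your inequality, it is too weak: $|Dw|^\gamma\mathcal{M}^+(D^2w)\ge 0$ does not by itself exclude an interior maximum (a constant satisfies it), so your ``no-interior-max'' step needs a \emph{strict} and \emph{uniformly elliptic} inequality, and your proposed $\varepsilon$-perturbations do not produce one.

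The good news is that your route can be repaired cleanly, and in a way that makes the worry about $\mathcal{S}_0=\{Du=0\}$ disappear entirely. In $\{u>\phi\}$ the solution satisfies $|Du|^\gamma F(x,D^2u)=0$, so by the Cutting Lemma \ref{cutting} it is a viscosity solution of the \emph{uniformly elliptic} equation $F(x,D^2u)=0$; in particular it is a subsolution. From \eqref{eq.ob} and $|D\phi|\le\|\phi\|_{C^{1,1}(B_1)}$ one gets $F(x,D^2\phi)\le-\Upsilon\|\phi\|_{C^{1,1}(B_1)}^{-\gamma}=:-c_0<0$ in the viscosity sense. Then, exactly as in the proof of Lemma \ref{lem.flat}, the difference of a subsolution and a supersolution satisfies $\mathcal{M}^-_{\lambda/n,\Lambda}\bigl(D^2(u-\phi)\bigr)\le -c_0$... more precisely $u-\phi$ is a subsolution of $\mathcal{M}^+_{\lambda/n,\Lambda}(D^2(u-\phi))\ge c_0$ by \cite[Theorem 5.3]{CC95}, whence $\mathcal{M}^+_{\lambda/n,\Lambda}(D^2w)\ge c_0-2nc\Lambda>0$ for $c$ small. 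A viscosity subsolution of a strictly positive uniformly elliptic inequality cannot attain an interior maximum (test with a constant), and the rest of your argument goes through verbatim, yielding $\mathfrak{c}=c(n,\lambda,\Lambda,\gamma,\Upsilon,\|\phi\|_{C^{1,1}(B_1)})$. With this repair your proof is a valid alternative to the paper's, trading the Birindelli--Demengel comparison principle for the Cutting Lemma plus the Caffarelli--Cabr\'e difference estimate.
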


As consequence of the non-degeneracy of Theorem \ref{ThmNonDegHom} we can show that the free boundary has a sort of measure-theoretic properties provided the obstacle satisfies \eqref{eq.ob}. This requires us to recall the definition of \emph{porosity}: a bounded measurable set $E \subset \R^n$ is called $\varsigma$-porous if for any $x\in E$ there exists a $\varsigma\in(0,1)$ such that for any ball $B_r(x)$ there exists $y\in B_r(x)$ such that
\[
B_{\varsigma r}(y)\subset B_r(x)\setminus E.
\]
Notice that if $E$ is $\varsigma$-porous and $x\in E$ then
\[
\frac{|B_r(x)\cap E|}{|B_r(x)|}=\frac{|B_r(x)|-|B_r(x)\setminus E|}{|B_r(x)|}\leq 1-\varsigma^n,
\]
so that $E$ has no points of density one. Moreover, a $\varsigma-$porous set has $\mathcal{H}_{\text{dim}}(E)\leq n-c\varsigma^n$ (Hausdorff dimension), where $c=c(n)>0$ is a dimensional constant; in particular, a $\varsigma-$porous set has Lebesgue measure zero (see, \cite{Zaj} for more details).

Now we can state the following corollary:

\begin{cor}\label{Cor3}
Suppose that the assumptions of Theorem \ref{ThmNonDegHom} are in force with $f\equiv0$. Then, the free boundary is $\varsigma-$porous and there exists  $\epsilon = \epsilon(n, \varsigma)>0$ such that
$$
   \mathcal{H}_{\text{dim}}\left(\partial\{u>\phi\}\cap B_{\frac{1}{2}}\right)\leq n-\epsilon.
$$

In particular, the free boundary has zero Lebesgue measure.
\end{cor}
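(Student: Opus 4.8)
The plan is to establish porosity of the free boundary by pitting against each other the two matching quadratic bounds at our disposal: the \emph{upper} quadratic growth of $u-\phi$ at free boundary points from Corollary \ref{CorC1,1} (which is exactly where the hypotheses $f\equiv0$ and $\phi\in C^{1,1}$ enter), and the \emph{lower} quadratic non-degeneracy of $u-\phi$ away from the free boundary from Theorem \ref{ThmNonDegHom}. Once $\partial\{u>\phi\}\cap B_{1/2}$ is shown to be $\varsigma$-porous for some $\varsigma\in(0,1)$, the dimension estimate $\mathcal{H}_{\text{dim}}\big(\partial\{u>\phi\}\cap B_{1/2}\big)\le n-\epsilon$, with $\epsilon=c(n)\varsigma^n$, and the vanishing of the Lebesgue measure are classical consequences of porosity, already recalled in the discussion preceding the statement (see \cite{Zaj}).

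For the porosity argument, fix $x_0\in\partial\{u>\phi\}\cap B_{1/2}$ and a radius $r>0$ small enough that all the balls occurring below stay well inside $B_1$; both quadratic estimates are purely local and hold, with controlled constants, at every free boundary point bounded away from $\partial B_1$, so this is the only — and entirely routine — scale bookkeeping needed. Applying Theorem \ref{ThmNonDegHom} at $x_0$ with radius $r/2$, and using continuity of $u-\phi$ on the compact sphere, we get a point $y\in\partial B_{r/2}(x_0)$ with
\[
u(y)-\phi(y)\ \ge\ \frac{\mathfrak{c}}{4}\,r^2 .
\]
I claim that $B_{\varsigma r}(y)\subset\{u>\phi\}$ for a suitable $\varsigma\in(0,1/2)$; since $|y-x_0|=r/2$ and $\varsigma<1/2$ we have $B_{\varsigma r}(y)\subset B_r(x_0)$, so this yields $B_{\varsigma r}(y)\subset B_r(x_0)\setminus\partial\{u>\phi\}$, which is precisely the required porosity property. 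To prove the claim, suppose there were $z\in B_{\varsigma r}(y)\cap\partial\{u>\phi\}$. Then, since $y\in B_{\varsigma r}(z)$ and $\varsigma r\ll1$, Corollary \ref{CorC1,1} applied at $z$ gives
\[
u(y)-\phi(y)\ \le\ C_\sharp\big(\|u\|_{L^\infty(B_1)}+\|\phi\|_{C^{1,1}(B_1)}\big)\,(\varsigma r)^2 .
\]
Combining the two displays forces $\frac{\mathfrak{c}}{4}\le C_\sharp\big(\|u\|_{L^\infty(B_1)}+\|\phi\|_{C^{1,1}(B_1)}\big)\,\varsigma^2$, a contradiction as soon as
\[
\varsigma\ <\ \frac12\min\left\{1,\ \left(\frac{\mathfrak{c}}{4\,C_\sharp\big(\|u\|_{L^\infty(B_1)}+\|\phi\|_{C^{1,1}(B_1)}\big)}\right)^{\!1/2}\right\}.
\]
Hence for such a $\varsigma$ the ball $B_{\varsigma r}(y)$ contains no free boundary point, and since $u(y)-\phi(y)>0$ the connectedness of the ball then forces $B_{\varsigma r}(y)\subset\{u>\phi\}$. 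Fixing this $\varsigma$ (which depends only on $n,\lambda,\Lambda,\gamma,\Upsilon$ and the data $\|u\|_{L^\infty(B_1)},\|\phi\|_{C^{1,1}(B_1)}$) establishes $\varsigma$-porosity of $\partial\{u>\phi\}\cap B_{1/2}$, and the corollary follows.

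The argument presents no genuine obstacle; the one point that needs minimal care is ensuring that the free boundary point $z$ produced in the contradiction step, together with the radii $r/2$ and $\varsigma r$ involved, lie in the range of validity of Corollary \ref{CorC1,1} and Theorem \ref{ThmNonDegHom} — which is immediate for $r$ small, after at most shrinking the ambient ball or rescaling the equation in the standard way. Everything else, namely the passage from the density bound $|B_r(x)\cap E|/|B_r(x)|\le1-\varsigma^n$ to $\mathcal{H}_{\text{dim}}(E)\le n-\epsilon$ and then to zero Lebesgue measure, is quoted from the material preceding the statement.
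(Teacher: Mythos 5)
Your proposal is correct and follows essentially the same route as the paper: the lower quadratic bound from Theorem \ref{ThmNonDegHom} at a point $y$ on a sphere around $x_0$, played against the upper quadratic growth from Corollary \ref{CorC1,1} at the nearest free boundary point, forces $\mathrm{dist}(y,\partial\{u>\phi\})\gtrsim r$ and hence porosity, with the Hausdorff-dimension and measure conclusions quoted from \cite{Zaj}. The only difference is cosmetic: the paper states the upper bound directly as $u(y)-\phi(y)\le C\,\mathrm{dist}(y,\partial\{u>\phi\})^2$, whereas you unpack the same estimate via a contradiction at a putative free boundary point $z\in B_{\varsigma r}(y)$.
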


\subsection{Insights behind the proofs and main difficulties to overcome}

As mentioned before, the main idea of the proof of Theorem \ref{main} is to use an geometric decay argument along those free boundary point around which the equation degenerates, i.e. where the gradient becomes very small.

The first key step in that direction is an Improvement of Flatness Lemma (Lemma \ref{lem.flat} below) that says, roughly speaking, that solutions with small right hand side have to be somewhat $\phi-$flat. This is a powerful device in nonlinear (geometric) regularity theory and plays a pivotal role in our approach. The core idea was inspired in the flatness improvement reasoning from \cite[Lemma 4.2]{daSLR} and references therein. Notwithstanding, the general class of operators (and the novelty scenery with the obstacle constraint) which we are dealing with imposes some significant adjusts to such strategies. As a matter of fact, different from \cite{daSLR} we are not allowed to conclude the proof via a strong maximum principle for the profile $v = u -\phi$. We overcome such an obstacle by invoking a ``Cutting Lemma'' for general fully nonlinear degenerate elliptic equations (Lemma \ref{cutting}), and by showing that such profiles are also viscosity (super)solutions of a fully nonlinear uniformly elliptic equation, which opens up the way to use the Strong Maximum Principle.

After that, the aim is to make use of this flatness argument to ensure that viscosity solutions are ``geometrically close'' to their tangent plane, i.e.
$$
    \displaystyle\sup_{B_{\rho}} \left|u(x)-u(0)-D u(0)\cdot x\right|\leq \rho^{1+\beta},
$$
thereby getting a first geometric estimate (see Lemma \ref{lem.firststep} for further details). Different from linear scenario and second order operator with lower order terms, it is noteworthy that the Lemma \ref{lem.firststep}, which represents the first step in an iteration process, is actually not enough to proceed with a standard iterative scheme as those used in \cite[Theorem 3.1]{ART15} or \cite[Section 5 and 6]{DT17}, i.e.,
$$
\displaystyle\sup_{B_{\rho^k}} \left|u(x)-\mathfrak{l}_k(x)\right|\leq \rho^{k(1+\beta)},
$$
because \textit{a priori} we do not know the equation which is satisfied by
$$
 v_k(x) \defeq \frac{(u-\mathfrak{l}_k)(\rho^kx)}{\rho^{k(1+\beta)}},
$$
where $\{\mathfrak{l}_k\}_{k\in\mathbb{N}}$ is sequence of affine functions (remind that $v \mapsto  |D v|^{\gamma}F(x, D^2 v)$ is not invariant by affine mappings). Nevertheless, it provides the following quantitative information on the oscillation of $u$ inside $B_{\rho}$:
$$
\displaystyle \sup_{B_{\rho}}\left|u(x)-u(0)\right|\leq\rho^{1+\beta}+\rho|D u(0)|,
$$
which proves to be the appropriate estimate to go forward with the iterative procedure, provided we get a control under the magnitude of the gradient in a suitable manner.

We close this introduction by presenting some known results that will be used later on. The next result is the ``Cutting Lemma'' from \cite{IS13} and it is concerned with the homogeneous degenerate problem:

\begin{lem}[{\bf Cutting Lemma}]\label{cutting}
Let $F$ be an operator satisfying \eqref{UnifEllip} and \eqref{eqxdep} and $u$ be a viscosity solution of
\[
|Du|^\gamma F(x,D^2u)=0 \quad \textrm{ in }\quad B_1.
\]
Then $u$ is viscosity solution of
\[
F(x,D^2u)=0 \quad \textrm{ in }\quad B_1.
\]
\end{lem}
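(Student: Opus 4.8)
The plan is to show that if $u$ solves $|Du|^\gamma F(x,D^2u)=0$ in the viscosity sense, then $u$ solves $F(x,D^2u)=0$ in the viscosity sense. The only place where the two statements can differ is at points $x_0$ where a test function touching $u$ has vanishing gradient; away from $\mathcal{S}_0=\{|Du|=0\}$ the factor $|Du|^\gamma$ is a positive continuous weight and the two equations are trivially equivalent on the level of viscosity inequalities. So the real content is: \emph{at a point $x_0$ with $D\varphi(x_0)=0$, a test function cannot touch $u$ from above with $D^2\varphi(x_0)$ having a ``too negative'' Pucci value, nor from below with $D^2\varphi(x_0)$ ``too positive''.} I will argue the supersolution case (touching from below); the subsolution case is symmetric.

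First I would set up the standard reduction: suppose $\varphi\in C^2$ touches $u$ from below at $x_0$, i.e. $u-\varphi$ has a local minimum at $x_0$, with $D\varphi(x_0)=0$; we must show $F(x_0,D^2\varphi(x_0))\ge 0$. Assume for contradiction that $F(x_0,D^2\varphi(x_0))=-\delta<0$. The idea is to perturb $\varphi$ so that the new test function still touches $u$ from below near $x_0$, but now has \emph{nonzero} gradient at the touching point, so that the hypothesis $|Du|^\gamma F(x,D^2u)\ge 0$ (as a supersolution) forces $F\ge 0$ there, contradicting continuity of $F$ and of $D^2\varphi$. Concretely, consider $\psi_{\xi,\eta}(x)=\varphi(x)+\xi\cdot(x-x_0)-\eta|x-x_0|^2$ for a small vector $\xi$ and small $\eta>0$, or alternatively the sup-convolution-type construction of \cite{IS13}: look at $\inf_{|\xi|\le\varepsilon}\big(u(x)-\xi\cdot(x-x_0)\big)$, which is again a supersolution of the same degenerate equation (infima of supersolutions are supersolutions, and translating by an affine function preserves the equation since $|Du|^\gamma F$ is translation invariant in this weak sense). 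The touching point of this infimum against $\varphi$ gets shifted to some nearby point where the relevant test function has gradient of size $\sim\varepsilon\ne 0$; there the equation is non-degenerate and yields $F(\,\cdot\,,D^2\varphi)\ge 0$ at a point close to $x_0$. Letting $\varepsilon\to 0$ and using continuity of $F$ in $x$ (via \eqref{eqxdep}) and of $D^2\varphi$, we obtain $F(x_0,D^2\varphi(x_0))\ge 0$, the desired contradiction.

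I expect the main obstacle to be the bookkeeping in the perturbation argument: one must ensure that after adding the linear term $\xi\cdot(x-x_0)$ (and, if needed, the quadratic correction $-\eta|x-x_0|^2$ to restore strict minimality) the perturbed function still touches $u$ \emph{from below} at a point \emph{interior} to $B_1$ and arbitrarily close to $x_0$, and that at that point the gradient of the test function is genuinely nonzero — this is where one invokes that a strict local minimum persists under small $C^1$ perturbations, together with compactness to locate the new minimum point. A secondary technical point is checking that taking the infimum over translates (or, equivalently, the family $u(\cdot)-\xi\cdot(\cdot-x_0)$) preserves the class of viscosity supersolutions of $|Du|^\gamma F(x,D^2u)=0$; this follows because each member of the family is a supersolution (the equation is invariant under addition of a constant vector to the gradient only in the degenerate-factor-free region, so one should instead phrase it via: each translate $u(\cdot+\tau)$ solves the translated equation, and use \eqref{eqxdep} to control the $x$-dependence as $\tau\to 0$), and the infimum of supersolutions is a supersolution. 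Once these points are in place, the contradiction closes the proof, and the subsolution case is handled by the obvious sign changes. Since this is precisely the statement and proof of \cite[Lemma 2.?]{IS13} adapted to the $x$-dependent operator, I would simply cite that reference for the detailed verification and present the argument above as a sketch.
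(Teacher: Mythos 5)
The paper offers no proof of Lemma \ref{cutting} beyond the one-line remark that the argument of \cite[Section 5]{IS13} goes through verbatim once the continuity assumption \eqref{eqxdep} is imposed; since you also end by deferring to \cite{IS13} for the details, at the level of ``approach'' you and the authors coincide, and the routine ingredients you list (stability of the touching point under small $C^{1}$ perturbations, passage to the limit in $x$ using \eqref{eqxdep}, symmetry of the sub- and supersolution cases) are the right ones.

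Read as an argument rather than as a pointer, however, your sketch has a genuine gap at exactly the step you dismiss as ``bookkeeping''. After replacing $\varphi$ by $\varphi+\xi\cdot(x-x_0)$, the gradient of the new test function at the new contact point $x_\xi$ is $D\varphi(x_\xi)+\xi$, and nothing forces this to be nonzero: the alternative $D\varphi(x_\xi)=-\xi$ for \emph{every} small $\xi$ is entirely possible (for a paraboloid with invertible Hessian $M$ it just means $x_\xi=x_0-M^{-1}\xi$), so the assertion that the shifted contact point carries a gradient ``of size $\sim\varepsilon\neq 0$'' is false as stated. This degenerate alternative is precisely the hard case of the Imbert--Silvestre cutting lemma, and it is not disposed of by continuity or compactness; it requires a separate argument in which one observes that the contact points $x_\xi$ then sweep out a full neighborhood of $x_0$ on which $u$ is pinched between explicit paraboloids, and extracts the sign of $F(x_0,D^2\varphi(x_0))$ from that two-sided control. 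A complete write-up must state the dichotomy (either some perturbation yields a contact point with nonvanishing gradient, or the degenerate alternative holds) and handle the second branch. Two smaller points: your sign convention is the opposite of the paper's Definition~1.1 (there, a supersolution satisfies $F\le f$ at points touched from below), and subtracting $\xi\cdot x$ from $u$ does not preserve the equation but turns it into $|Dv+\xi|^\gamma F(x,D^2v)=0$, which is why the contact-point analysis, not an invariance of the equation, is what must carry the argument.
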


The proof follows the exactly the same lines as the one in \cite[Section 5]{IS13} for translation invariant operators once the continuity assumption on the $x$ dependence of $F$, i.e. \eqref{eqxdep}, is in force.

Another important tool is the Comparison Principle:

\begin{thm}[{\bf Comparison Principle, \cite[Theorem 1.1]{BerDem}}]\label{Thm comparison principle} Let $\gamma>0$, $F$ satisfying \eqref{UnifEllip} and \eqref{eqxdep}$, f \in C^0(\overline{B_1})$ and $h$ a continuous increasing function satisfying $h(0)=0$. Suppose $u_1$ and $u_2$ be continuous functions are respectively a viscosity supersolution and subsolution of
\[
|Du|^\gamma F(x, D^2u) = h(u)+f(x) \quad \text{in} \quad B_1.
\]
If $u_1 \geq u_2$ on $\partial B_1$, then $u_1 \geq u_2$ in $B_1$.

Furthermore, if $h$ is nondecreasing (in particular if $h\equiv 0$), the result holds if $u_1$ is a strict supersolution or vice versa if $u_2$ is a strict subsolution. 	
\end{thm}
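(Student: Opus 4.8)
The plan is to run the classical doubling-of-variables scheme for viscosity solutions, adapted to the degenerate operator $G(x,p,X):=|p|^{\gamma}F(x,X)$; a key feature of the argument will be that the degeneracy of $G$ at $\{p=0\}$ helps rather than hinders. First I would reduce the statement to its ``furthermore'' form, i.e. to the case where $h$ is merely nondecreasing and one of the two functions, say $u_2$, is a \emph{strict} subsolution (the case of $u_1$ a strict supersolution being symmetric). Indeed, if $h$ is continuous and strictly increasing and $u_1$ is a plain supersolution, then for every $\mu>0$ the vertical translate $u_2-\mu$ is a strict subsolution: translating a test function does not affect $|D\varphi|^{\gamma}F(x,D^{2}\varphi)$, whereas $h(u_2)+f=h\big((u_2-\mu)+\mu\big)+f\geq h(u_2-\mu)+f+\kappa(\mu)$ with $\kappa(\mu):=\min_{s\in\mathrm{Range}(u_2)}\big(h(s)-h(s-\mu)\big)>0$, by compactness of $\mathrm{Range}(u_2)$ and strict monotonicity of $h$. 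Since $u_1\geq u_2\geq u_2-\mu$ on $\partial B_1$, the strict case yields $u_1\geq u_2-\mu$ in $B_1$, and letting $\mu\to 0^{+}$ gives the first assertion.

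For the strict case I would argue by contradiction: assume $\theta:=\max_{\overline{B_1}}(u_2-u_1)>0$, which by the boundary condition is attained only at interior points. Maximizing $\Phi_\varepsilon(x,y):=u_2(x)-u_1(y)-\tfrac{1}{2\varepsilon}|x-y|^{2}$ at some $(x_\varepsilon,y_\varepsilon)$, the standard penalization lemma gives $x_\varepsilon,y_\varepsilon\to\bar x$ (an interior maximum point of $u_2-u_1$), $u_2(x_\varepsilon)-u_1(y_\varepsilon)\to\theta$, and $\varepsilon^{-1}|x_\varepsilon-y_\varepsilon|^{2}\to 0$; the Jensen--Ishii lemma then produces $X_\varepsilon\leq Y_\varepsilon$ with $\|X_\varepsilon\|+\|Y_\varepsilon\|\leq C\varepsilon^{-1}$ such that, setting $p_\varepsilon:=\varepsilon^{-1}(x_\varepsilon-y_\varepsilon)$,
\[
|p_\varepsilon|^{\gamma}F(x_\varepsilon,X_\varepsilon)\geq h(u_2(x_\varepsilon))+f(x_\varepsilon)+\kappa,\qquad |p_\varepsilon|^{\gamma}F(y_\varepsilon,Y_\varepsilon)\leq h(u_1(y_\varepsilon))+f(y_\varepsilon),
\]
$\kappa>0$ being the strict margin. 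I would then split into two cases. If $p_\varepsilon=0$, then $x_\varepsilon=y_\varepsilon$ and, since for $\gamma>0$ the map $p\mapsto|p|^{\gamma}F(x,X)$ is continuous and vanishes at $p=0$, the two inequalities collapse to $0\geq h(u_2(x_\varepsilon))+f(x_\varepsilon)+\kappa$ and $0\leq h(u_1(x_\varepsilon))+f(x_\varepsilon)$; since $u_2(x_\varepsilon)>u_1(x_\varepsilon)$ for $\varepsilon$ small, the monotonicity of $h$ gives $\kappa\leq h(u_1(x_\varepsilon))-h(u_2(x_\varepsilon))\leq 0$, a contradiction. If $p_\varepsilon\neq 0$, subtracting the inequalities while keeping $|p_\varepsilon|^{\gamma}>0$, together with $u_2(x_\varepsilon)>u_1(y_\varepsilon)$ and $f(x_\varepsilon)-f(y_\varepsilon)\to 0$, yields $|p_\varepsilon|^{\gamma}\big(F(x_\varepsilon,X_\varepsilon)-F(y_\varepsilon,Y_\varepsilon)\big)\geq\kappa/2$ for $\varepsilon$ small, while \eqref{UnifEllip}, the bound $X_\varepsilon\leq Y_\varepsilon$, and \eqref{eqxdep} give
\[
F(x_\varepsilon,X_\varepsilon)-F(y_\varepsilon,Y_\varepsilon)\leq \mathcal{M}^{+}_{\lambda,\Lambda}(X_\varepsilon-Y_\varepsilon)+\Theta_F(x_\varepsilon,y_\varepsilon)\|Y_\varepsilon\|\leq C_F\,\omega(|x_\varepsilon-y_\varepsilon|)\|Y_\varepsilon\| .
\]
Feeding this back and using $\|Y_\varepsilon\|\leq C\varepsilon^{-1}$ should close the argument; to make the competition balance one may have to replace the quadratic penalization by $\tfrac{1}{q\varepsilon}|x-y|^{q}$ with $q=q(\gamma)$ large enough to absorb the factor $|p_\varepsilon|^{\gamma}$, exactly as in the comparison arguments of \cite{BerDem}, \cite{BeDe14} and \cite{IS13}.

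The step I expect to be the main obstacle is precisely this last balancing in the alternative $p_\varepsilon\neq 0$: controlling the spatial error $\Theta_F(x_\varepsilon,y_\varepsilon)\|Y_\varepsilon\|\sim\varepsilon^{-1}\omega(|x_\varepsilon-y_\varepsilon|)$ against the blow-up $\|Y_\varepsilon\|\sim\varepsilon^{-1}$ of the Ishii matrices and the amplifying factor $|p_\varepsilon|^{\gamma}$, which is where the calibration of the penalization exponent and the use of a genuinely \emph{strict} inequality (a positive margin $\kappa$ that does not degenerate with $\varepsilon$) become essential; this is the precise point at which the degenerate structure $|Du|^{\gamma}F(x,D^{2}u)$ departs from the uniformly elliptic theory. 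The degeneracy at $\{p=0\}$, by contrast, is entirely benign: when the doubled gradient vanishes the first-order part of the equation simply drops out, and the zeroth-order term, through $h(0)=0$ and its monotonicity, carries the contradiction.
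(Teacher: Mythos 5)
First, a point of reference: the paper does not prove this statement at all --- it is imported verbatim as \cite[Theorem 1.1]{BerDem}, so the only meaningful comparison is against the proof in that reference. Your skeleton (reduction of the non-strict case to the strict one by a vertical translate when $h$ is strictly increasing; doubling of variables with a penalization $|x-y|^{q}/\varepsilon$; Jensen--Ishii; a dichotomy on whether the doubled gradient $p_\varepsilon$ vanishes, with the degenerate set $\{p_\varepsilon=0\}$ handled by the zeroth-order term) is indeed the architecture of the Birindelli--Demengel argument, and your treatment of the alternative $p_\varepsilon=0$ is correct and complete: there the matrices are irrelevant because they are multiplied by $|p_\varepsilon|^{\gamma}=0$, and the contradiction comes from $h$ alone.

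The genuine gap is exactly where you place it, but it is worth being precise about why it is a gap and why your proposed fix does not close it. From $X_\varepsilon\leq Y_\varepsilon$, \eqref{UnifEllip} and \eqref{eqxdep} you only get
\[
F(x_\varepsilon,X_\varepsilon)-F(y_\varepsilon,Y_\varepsilon)\leq C_F\,\omega(|x_\varepsilon-y_\varepsilon|)\,\|Y_\varepsilon\|,
\]
and with the quadratic penalization one only knows $\|Y_\varepsilon\|\leq C\varepsilon^{-1}$ and $\varepsilon^{-1}|x_\varepsilon-y_\varepsilon|^{2}\to 0$. Even for $\omega(r)=r$ this gives $\varepsilon^{-1}\omega(|x_\varepsilon-y_\varepsilon|)=|x_\varepsilon-y_\varepsilon|/\varepsilon=|p_\varepsilon|$, which is $o(\varepsilon^{-1/2})$ and need not stay bounded, let alone vanish; multiplying by $|p_\varepsilon|^{\gamma}$ only worsens this, and a short computation shows that raising the penalization exponent $q$ improves the blow-up rate to $o(\varepsilon^{-(\gamma+2)/q})$ but never makes it decay. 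So the discrete inequality ``$\leq C_F\,\omega\cdot\|Y_\varepsilon\|$'' is structurally too weak: one must use the \emph{full} two-sided Jensen--Ishii matrix inequality $\left(\begin{smallmatrix}X_\varepsilon&0\\0&-Y_\varepsilon\end{smallmatrix}\right)\leq A_\varepsilon+\eta A_\varepsilon^{2}$ (not merely its corollary $X_\varepsilon\leq Y_\varepsilon$) together with a quantitative structure condition on the $x$-dependence --- in \cite{BerDem} a H\"older condition with exponent tied to $\gamma$ and the penalization exponent $q$, playing the role of condition (3.14) of the Crandall--Ishii--Lions User's Guide --- to produce a term that actually tends to $0$. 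Under the bare hypothesis \eqref{eqxdep} (an arbitrary modulus $\omega$) this step cannot be carried out, and indeed comparison in that generality is not expected. In short: your proof is the right proof in outline, but the decisive estimate is not a calibration detail one can defer --- it requires strictly more structure than \eqref{UnifEllip}--\eqref{eqxdep}, which is precisely why the paper states the result with a citation to \cite[Theorem 1.1]{BerDem} rather than proving it from its own standing assumptions.
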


The next result is a Comparison Principle adjusted to our context; its proof is a consequence of Theorem \ref{Thm comparison principle} (see, \cite[Lemma 2.3]{DSV} for more details).

\begin{lem}[{\bf Comparison Principle}]\label{comparison principle}
Let $u_1$ and $u_2$ be continuous functions in $\Omega$ and $f \in C^0(\overline{B_1})$ fulfilling
$$
    |Du_1|^\gamma F(x, D^2u_1) \leq f(x) \leq |Du_2|^\gamma F(x, D^2u_2) \quad  \text{ in } \quad B_1
$$
in the viscosity sense for some uniformly elliptic operator $F$ and $\displaystyle \inf_{B_1} f>0$ or $\displaystyle \sup_{B_1} f<0$. If $u_1 \geq u_2$ on $\partial B_1$, then $u_1 \geq u_2$ in $B_1$.
\end{lem}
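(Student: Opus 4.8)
I would deduce Lemma~\ref{comparison principle} from the Birindelli--Demengel comparison principle (Theorem~\ref{Thm comparison principle}) by manufacturing, out of $u_1$ (resp.\ $u_2$), a \emph{strict} supersolution (resp.\ subsolution): the fact that $f$ has a fixed sign is precisely what compensates for the degeneracy of the operator. First I would normalize the sign: replacing $(F,u_1,u_2,f)$ by $(\tilde F,-u_2,-u_1,-f)$ with $\tilde F(x,X):=-F(x,-X)$ — which is again uniformly elliptic with the same constants, satisfies $\tilde F(x,0)=0$ and \eqref{eqxdep} with the same $\omega$, obeys $|D(-u_i)|^\gamma\tilde F(x,D^2(-u_i))=-|Du_i|^\gamma F(x,D^2u_i)$, and is convex/concave exactly when $F$ is concave/convex — turns the case $\sup_{B_1}f<0$ into the case $\inf_{B_1}f>0$. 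So we may assume $\displaystyle m:=\inf_{B_1}f>0$, and (Step~1 notwithstanding) I will perturb $u_1$, say for $F$ convex, the concave case being symmetric.

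\textbf{Building a strict supersolution.} For $\varepsilon\in(0,1)$ set
\[
u_1^\varepsilon:=(1-\varepsilon)u_1+\varepsilon M,\qquad M:=\sup_{\partial B_1}u_1 .
\]
Since $M\ge u_1$ on $\partial B_1$ we get $u_1^\varepsilon\ge u_1\ge u_2$ on $\partial B_1$, and $u_1^\varepsilon\to u_1$ uniformly on $\overline{B_1}$ as $\varepsilon\downarrow0$. I claim $u_1^\varepsilon$ is a \emph{strict} supersolution of $|Du|^\gamma F(x,D^2u)=f(x)$ in $B_1$. Indeed, if $\psi\in C^2$ touches $u_1^\varepsilon$ from below at $x_0\in B_1$, then $(1-\varepsilon)^{-1}(\psi-\varepsilon M)$ touches $u_1$ from below at $x_0$, so, with $p:=D\psi(x_0)$ and $Y:=D^2\psi(x_0)$, the supersolution property of $u_1$ gives $|p|^\gamma F\!\big(x_0,\tfrac{Y}{1-\varepsilon}\big)\le(1-\varepsilon)^\gamma f(x_0)$. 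If $F(x_0,Y)\le0$ then $|p|^\gamma F(x_0,Y)\le0<m\le f(x_0)$. If $F(x_0,Y)>0$, then since $X\mapsto F(x_0,X)$ is convex with $F(x_0,0)=0$ one has $F\!\big(x_0,\tfrac{Y}{1-\varepsilon}\big)\ge\tfrac{1}{1-\varepsilon}F(x_0,Y)$, whence $|p|^\gamma F(x_0,Y)\le(1-\varepsilon)^{\gamma+1}f(x_0)<f(x_0)$. Either way $u_1^\varepsilon$ is a strict supersolution (for $F$ concave one runs the mirror argument on $u_2$, using that $\inf f>0$ forces $p\neq0$ at every test function touching $u_2$ from above, hence $F(x_0,\cdot)>0$ there, before invoking the corresponding scaling inequality for concave $F$).

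\textbf{Conclusion.} Since $u_1^\varepsilon$ is a strict supersolution, $u_2$ a subsolution, and $u_1^\varepsilon\ge u_2$ on $\partial B_1$, Theorem~\ref{Thm comparison principle} with $h\equiv0$ yields $u_1^\varepsilon\ge u_2$ in $B_1$ for every $\varepsilon\in(0,1)$; letting $\varepsilon\downarrow0$ gives $u_1\ge u_2$ in $B_1$.

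\textbf{Main obstacle.} The delicate point is the strictness claim: unlike the uniformly elliptic case, a quadratic perturbation gains a term of the same order as the perturbation of the gradient, so the degenerate factor $|Du|^\gamma$ can annihilate it, and at test functions touching with vanishing gradient the equation says nothing at all. What rescues the argument is exactly that $f$ is one-signed — with $\inf f>0$ the subsolution has no ``critical points from above'', so the operator is effectively non-degenerate where it matters — together with, for operators that are not homogeneous, the convexity/concavity of $F$ used to compare $F(x_0,\tfrac{Y}{1-\varepsilon})$ with $F(x_0,Y)$. (For a treatment valid for a general uniformly elliptic $F$ one may instead re-run the Jensen--Ishii doubling-of-variables scheme underlying Theorem~\ref{Thm comparison principle}: at a purported interior maximum of $u_2-u_1$ the penalization gradient $p_j=j(x_j-y_j)$ must be nonzero, for otherwise $0=|p_j|^\gamma F(x_j,X_j)\ge f(x_j)\ge m>0$, and then the structural conditions \eqref{UnifEllip}--\eqref{eqxdep} together with the standard localization close the estimate.)
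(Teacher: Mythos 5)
Your reduction to $\inf_{B_1}f>0$ and your treatment of the case where $X\mapsto F(x,X)$ is \emph{convex} are correct: the dilation $u_1^\varepsilon=(1-\varepsilon)u_1+\varepsilon M$ together with $F(x_0,tY)\ge tF(x_0,Y)$ for $t>1$ (convexity plus the normalization $F(x,0)=0$) produces a strict supersolution with uniform gap $\bigl(1-(1-\varepsilon)^{\gamma+1}\bigr)\inf_{B_1}f$, and the strict-supersolution clause of Theorem \ref{Thm comparison principle} with $h\equiv0$ then closes the argument. For comparison, the paper does not reprove the lemma at all: it invokes Theorem \ref{Thm comparison principle} as in \cite[Lemma 2.3]{DSV}, i.e.\ the doubling-of-variables scheme, where $\inf f>0$ is used only to rule out a vanishing penalization gradient. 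Where your strictification applies it is a genuinely different and more elementary route.

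There is, however, a real gap in the concave case, and your sign-flip reduction cannot remove it: the substitution $(F,u_1,u_2,f)\mapsto(\tilde F,-u_2,-u_1,-f)$ flips the sign of $f$ \emph{and} the convexity of $F$ simultaneously, so after normalizing to $\inf f>0$ you are still left with the subcase of concave $F$, which is exactly where the ``mirror argument on $u_2$'' fails. To make $u_2^\varepsilon=(1+\varepsilon)u_2-\varepsilon\sup_{\partial B_1}u_2$ a \emph{strict} subsolution you need a \emph{lower} bound on $F(x_0,Y)$ in terms of $F\bigl(x_0,\tfrac{Y}{1+\varepsilon}\bigr)$, but concavity with $F(x_0,0)=0$ only yields the upper bound $F(x_0,(1+\varepsilon)Z)\le(1+\varepsilon)F(x_0,Z)$. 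With the factor $(1-\varepsilon)$ instead one gets $|p|^\gamma F(x_0,Y)\ge(1-\varepsilon)^{\gamma+1}f(x_0)$, which is \emph{weaker} than the subsolution inequality, not stronger; the same obstruction appears if one tries to strictify $u_1$ for concave $F$ (both dilation directions give either the wrong-sided bound or a constant $(1+\varepsilon)^{\gamma+1}>1$). So none of the four dilation choices produces strictness when $F$ is concave and $\inf f>0$. The fix is precisely the argument you relegate to a parenthesis: run the Jensen--Ishii doubling of variables and use $\inf f>0$ to force $p_j=j(x_j-y_j)\neq0$ at the doubled maximum; this works for any uniformly elliptic $F$ (note the lemma as stated assumes no convexity whatsoever) and is the route the paper takes through Theorem \ref{Thm comparison principle} and \cite[Lemma 2.3]{DSV}. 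As written, your main argument covers only the cases ($F$ convex, $\inf f>0$) and ($F$ concave, $\sup f<0$).
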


Finally, we present a fundamental gradient estimate for the unconstrained problem, which play a crucial role in obtaining our estimates along free boundary points of solutions. Such a result (stated in a ``non optimal form'') can be found in \cite[Theorem 1]{IS13} (see also \cite[Theorem 3.1]{ART15}).

\begin{thm}[{\bf Gradient estimates}]\label{GradThm} Let $F$ be an operator satisfying \eqref{UnifEllip} and \eqref{eqxdep} and let $u$ be a bounded viscosity solution to
$$
 |Du|^\gamma F(x , D^2u) = f \in L^{\infty}(B_1).
$$
Then,
\begin{equation*}
\|u\|_{C^{1,\sigma}(B_{1/2})}\leq C.\left(\|u\|_{L^{\infty}(B_1)}+ \|f\|_{L^{\infty}(B_1)}^{\frac{1}{\gamma+1}}\right),
\end{equation*}
for universal constants $\sigma<<1$ and $C>0$.
\end{thm}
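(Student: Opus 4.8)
The plan is to reduce the assertion to the normalized situation of unit-size data, where it is exactly the (non-optimal) interior $C^{1,\sigma}$ regularity of Imbert--Silvestre, and then recover the correct homogeneity in $\|f\|_{L^\infty}$ by undoing the scaling. First I would dispose of the trivial case: if $K \defeq \|u\|_{L^{\infty}(B_1)} + \|f\|_{L^{\infty}(B_1)}^{\frac{1}{\gamma+1}} = 0$ then $u\equiv 0$ and there is nothing to prove, so I may assume $K>0$ and set
\[
v \defeq \frac{u}{K}, \qquad \tilde F(x, X) \defeq \frac{1}{K}F(x, KX), \qquad \tilde f \defeq \frac{f}{K^{\gamma+1}}.
\]
A direct computation shows that $v$ is a viscosity solution of $|Dv|^{\gamma}\tilde F(x, D^2 v) = \tilde f$ in $B_1$, since $|Dv|^{\gamma}\tilde F(x, D^2 v) = K^{-\gamma}|Du|^{\gamma}\cdot K^{-1}F(x, D^2 u) = K^{-(\gamma+1)}f$; moreover $\|v\|_{L^{\infty}(B_1)}\le 1$ and $\|\tilde f\|_{L^{\infty}(B_1)} = K^{-(\gamma+1)}\|f\|_{L^{\infty}(B_1)}\le 1$ by the very definition of $K$.

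The key point of the argument is that this renormalization leaves the structural class of operators untouched. Indeed, $\tilde F$ is again uniformly elliptic with the \emph{same} constants $\lambda,\Lambda$, because $X\mapsto K^{-1}F(x,KX)$ satisfies \eqref{UnifEllip} with the same Pucci bounds $\mathcal{M}^{\pm}_{\lambda,\Lambda}$; and it satisfies \eqref{eqxdep} with the same $C_F$ and the same modulus $\omega$, because
\[
\Theta_{\tilde F}(x,y) = \sup_{X\neq 0}\frac{|F(x,KX)-F(y,KX)|}{K\|X\|} = \Theta_F(x,y).
\]
Hence the interior $C^{1,\sigma}$ estimate of \cite[Theorem 1]{IS13} --- in the version extended to operators with $x$-dependence satisfying \eqref{eqxdep}, as in \cite[Theorem 3.1]{ART15} --- applies to $v$ with constants depending only on $n,\lambda,\Lambda,\gamma$ and $\omega$, yielding $\|v\|_{C^{1,\sigma}(B_{1/2})}\le C$ for universal $\sigma\ll 1$ and $C>0$.

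Finally I would undo the scaling: since the $C^{1,\sigma}$ norm is positively $1$-homogeneous in the function, $\|u\|_{C^{1,\sigma}(B_{1/2})} = K\,\|v\|_{C^{1,\sigma}(B_{1/2})}\le CK = C\,(\|u\|_{L^{\infty}(B_1)} + \|f\|_{L^{\infty}(B_1)}^{\frac{1}{\gamma+1}})$, which is precisely the claimed bound. I do not expect any genuine obstacle here beyond careful bookkeeping; the one place the argument could break down for a carelessly chosen normalization is the invariance just highlighted, namely that the rescaled operator $\tilde F$ remains in the same class --- with scale-invariant ellipticity constants \emph{and} scale-invariant modulus of continuity in \eqref{eqxdep} --- which is exactly what forces the output constants $\sigma$ and $C$ to be universal.
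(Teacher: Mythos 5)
Your argument is correct. The paper does not actually prove this theorem; it quotes it from \cite[Theorem 1]{IS13} and \cite[Theorem 3.1]{ART15}, and your normalization $v=u/K$, $\tilde F(x,X)=K^{-1}F(x,KX)$, $\tilde f=K^{-(\gamma+1)}f$ --- together with the observation that \eqref{UnifEllip} and \eqref{eqxdep} are invariant under this rescaling by the $1$-homogeneity of the Pucci operators --- is exactly the routine scaling bookkeeping needed to pass from the normalized estimate in those references to the stated form, so it matches the paper's (implicit) approach.
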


\section{Proof of Theorems \ref{main} and \ref{sharpgrad}}\label{sec.main}

This section is devoted to the proof of Theorems \ref{main} and \ref{sharpgrad}. As mentioned in the Introduction, the strategy for Theorem \ref{main} is to use a geometric oscillation decay argument to control the growth of solutions along those free boundary points in which the equation degenerates, i.e. those points where the gradient becomes small (in a sense to be made precise later). On the other hand, at those points where the gradient is bounded below the equation is uniformly elliptic and classical estimates can be applied. We start by defining the appropriate localized solutions to our problem:

\begin{defi}\label{FineClass} For a fully nonlinear operator $F$ fulfilling \eqref{UnifEllip} and \eqref{eqxdep}, $\phi\in C^{1,\alpha}(B_1)$ with $\alpha\in(0,1]$ and $f\in L^\infty(B_1)\cap C^0(B_1)$. We say that $u \in \mathfrak{J}(F, \phi, f)(B_1)$ if
\begin{itemize}

  \item $|Du|^{\gamma}F(x, D^2 u) = f(x)\chi_{\{u>\phi\}}$ in $B_1$ in the viscosity sense;
  \item  There exists $L \in (0, \infty)$ such that $-L \leq u\leq L$ in $B_1$ (see, \cite[Theorem 1]{DFQ09});
  \item $u\geq \phi$ in $B_r$ and $u(0)=\phi(0)$.
\end{itemize}
\end{defi}

\begin{rem}[{\bf Normalization assumption}]\label{RemNormAss} Without loss of generality, we will perform our estimates for solutions in these normalized classes and the precise estimates \eqref{EqSRE} and \eqref{detach} follow simply by scaling appropriately. We may further assume that $u$ solves the $(F, \gamma,\phi, f)$-obstacle problem with obstacle $\phi$ and source term $f$ fulfilling
\[
\|u\|_{L^{\infty}(B_1)}\leq 1, \quad \|\phi\|_{C^{1, \alpha}(B_1)} \leq \frac{1}{2} \quad\textrm{ and }\quad \|f\|_{L^{\infty}(B_1)}\leq \delta_0, \]
for any given $\delta_0>0$. As a matter of fact, let us consider the normalized function:
\[
v(x) \defeq \frac{u(x)}{\|u\|_{L^{\infty}(B_1)} + \left(2^{\gamma+1}\|\phi\|_{C^{1, \alpha}(B_1)}^{\gamma+1}+\delta_0^{-1}\|f\|_{L^{\infty}(B_1)}\right)^{\frac{1}{\gamma+1}}}.
\]
$v$ thus defined will satisfy an equation like \eqref{Eq1} with $F,f$ and $\phi$ replaced by $\hat{F},\hat{f}$ and $\hat{\phi}$ respectively where
\[
\left\{
\begin{array}{rcl}
 \kappa & \defeq & \|u\|_{L^{\infty}(B_1)} + \left(2^{\gamma+1}\|\phi\|_{C^{1, \alpha}(B_1)}^{\gamma+1}+\delta_0^{-1}\|f\|_{L^{\infty}(B_1)}\right)^{\frac{1}{\gamma+1}}\\
  \hat{F}(x, X) & \defeq & \kappa^{-1}F\left(x, \kappa X\right) \\
  \hat{f}(x) & \defeq & \kappa^{-(\gamma+1)}f(x) \\
  \hat{\phi}(x) & \defeq & \kappa^{-1}\phi(x).
\end{array}
\right.
\]
Furthermore, $\hat{F}$ is still elliptic with the same ellipticity constants as $F$, and $\hat{f}$ and $\hat{\phi}$ fall into the desired statements.
\end{rem}

The first key step towards the proof of Theorem \ref{main} is Lemma \ref{lem.flat}, which states that solutions with a small enough right hand side are themselves somewhat flatter in the interior. We start however with the following simple stability result that will be instrumental in the proof of Lemma \ref{lem.flat}:

\begin{lem}\label{lem.stab} Let $\{F_k(x,X)\}_{k\in \mathbb{N}}$ be a sequence of operators satisfying \eqref{UnifEllip} with the same ellipticity constants and \eqref{eqxdep} for the same modulus of continuity in $B_1$. Then there exists an elliptic operator $F_0$ which still satisfies \eqref{UnifEllip} and \eqref{eqxdep} such that
\[
F_k\longrightarrow F_0
\]
uniformly con compact subsets of $\text{Sym}(n)\times B_1$.
\end{lem}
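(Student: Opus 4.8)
The plan is to prove Lemma \ref{lem.stab} by a standard compactness/diagonal argument, extracting a limiting operator $F_0$ as a locally uniform limit along a subsequence and then checking that the limit inherits both structural conditions \eqref{UnifEllip} and \eqref{eqxdep}. First I would observe that the sequence $\{F_k\}$ is equi-Lipschitz in the matrix variable: from \eqref{UnifEllip} applied with $Y$ fixed, $|F_k(x,X)-F_k(x,Y)| \leq \mathcal{M}_{\lambda,\Lambda}^+(X-Y) \vee (-\mathcal{M}_{\lambda,\Lambda}^-(X-Y)) \leq \Lambda \sqrt{n}\,\|X-Y\|$ (or simply $n\Lambda\|X-Y\|$), a bound uniform in $k$ and $x$. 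Combined with the normalization $F_k(x,0)=0$ this gives the pointwise bound $|F_k(x,X)| \leq C(n,\Lambda)\|X\|$, so the family is uniformly bounded on compact subsets of $\text{Sym}(n)\times B_1$. Equicontinuity in $x$ is exactly \eqref{eqxdep}: $|F_k(x,X)-F_k(y,X)| \leq C_F\,\omega(|x-y|)\,\|X\|$, uniform in $k$ by hypothesis. Hence $\{F_k\}$ is uniformly bounded and equicontinuous on every compact $K\subset\text{Sym}(n)\times B_1$, so by Arzelà–Ascoli and a diagonal extraction over an exhaustion of $\text{Sym}(n)\times B_1$ by compacts, there is a subsequence (not relabeled) converging locally uniformly to some continuous $F_0:\text{Sym}(n)\times B_1\to\R$.

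Next I would verify that $F_0$ satisfies \eqref{UnifEllip}. For fixed $x$ and fixed $X,Y$, pass to the limit in
\[
\mathcal{M}_{\lambda,\Lambda}^-(X-Y) \leq F_k(x,X)-F_k(x,Y) \leq \mathcal{M}_{\lambda,\Lambda}^+(X-Y);
\]
the middle term converges to $F_0(x,X)-F_0(x,Y)$ and the Pucci bounds are independent of $k$, so the same chain of inequalities holds for $F_0$ with the same ellipticity constants $\lambda,\Lambda$. In particular $F_0$ is monotone in the matrix variable, so it is a legitimate operator in the sense required for the viscosity theory. Similarly, passing to the limit in \eqref{eqxdep} for fixed $X\neq 0$, $x$, $y$ gives $|F_0(x,X)-F_0(y,X)| \leq C_F\,\omega(|x-y|)\,\|X\|$, and taking the supremum over $X\neq 0$ yields $\Theta_{F_0}(x,y)\leq C_F\,\omega(|x-y|)$, i.e. \eqref{eqxdep} holds for $F_0$ with the same constant and modulus. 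The normalization $F_0(x,0)=0$ is immediate. This is essentially all there is: the lemma is a soft consequence of the uniform structural bounds.

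The only mildly delicate point — the step I would flag as requiring a word of care rather than real difficulty — is that Arzelà–Ascoli as usually stated applies to a single compact domain, so I would be explicit about the diagonal argument: write $\text{Sym}(n)\times B_1 = \bigcup_{j} K_j$ with $K_j = \{\|X\|\leq j\}\times \overline{B}_{1-1/j}$, extract a subsequence convergent uniformly on $K_1$, a further subsequence convergent uniformly on $K_2$, and so on, then take the diagonal subsequence, which converges uniformly on each $K_j$ and hence locally uniformly on $\text{Sym}(n)\times B_1$. (Note that the statement as written asserts convergence of the full sequence $F_k$; as is standard in this compactness-method context, this should be read as ``along a subsequence,'' which is what is used in the proof of Lemma \ref{lem.flat}.) No estimate here is more than a two-line verification, so there is no serious obstacle; the content is purely that \eqref{UnifEllip} and \eqref{eqxdep} are closed under locally uniform limits.
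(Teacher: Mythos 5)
Your proposal is correct and follows essentially the same route as the paper: uniform ellipticity gives equi-Lipschitz continuity in the matrix variable, \eqref{eqxdep} gives equicontinuity in $x$, and a diagonal Arzel\`a--Ascoli argument over an exhaustion by compacts produces the limit $F_0$, whose structural properties follow by passing to the limit in the defining inequalities. Your remark that the convergence is really along a subsequence is a fair (and correct) reading of how the lemma is actually used in Lemma \ref{lem.flat}.
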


\begin{proof}
The proof follows by a standard application of the diagonalization argument of Arzel\`a-Ascoli's Theorem once we note that the ellipticity condition \eqref{UnifEllip} is equivalent to
\[
\lambda\|(X-Y)^+\|-\Lambda\|(X-Y)^-\|\leq F(x, X)-F(x, Y)\leq \Lambda\|(X-Y)^+\|-\lambda\|(X-Y)^-\|
\]
where $A^+$ (resp. $A^-$) stands for the positive (resp. negative) part of the matrix $A$. This implies the uniform Lipschitz character of the sequence $\{F_k\}_{k\in \mathbb{N}}$ on the matrix variable and the result readily follows.
\end{proof}

We are in a position to prove the Improvement of Flatness Lemma:

\begin{lem}[{\bf Improvement of Flatness Lemma}]\label{lem.flat} Given $0<\iota<1$, there exists a $\delta_{\iota}= \delta_{\iota}(\iota,n, \lambda, \Lambda, \gamma)>0$ such that if $u \in \mathfrak{J}(F, \phi, f)(B_{1})$ with
$$
\max\left\{\Theta_F(x, 0), \left\|f\right\|_{L^{\infty}(B_{1})}\right\} \leq \delta_{\iota}
$$
(recall \ref{eqxdep} for the definition of $\Theta_F$). Then,
\begin{equation}\label{eq.flat}
      \max\left\{\|u-\phi\|_{L^{\infty}\left(B_{\frac{1}{2}}\right)}, \|D(u-\phi)\|_{L^{\infty}\left(B_{\frac{1}{2}}\right)}\right\} \leq \iota.
\end{equation}
\end{lem}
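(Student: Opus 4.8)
The plan is to argue by contradiction and compactness, which is the standard way to prove an improvement-of-flatness statement of this kind. Suppose the conclusion fails for some fixed $\iota\in(0,1)$: then for every $k\in\mathbb{N}$ there is an operator $F_k$ satisfying \eqref{UnifEllip} (with the same ellipticity constants) and \eqref{eqxdep} (with the same modulus $\omega$), an obstacle $\phi_k\in C^{1,\alpha}(B_1)$ with $\|\phi_k\|_{C^{1,\alpha}}\le\frac12$, a source $f_k$ with $\|f_k\|_{L^\infty(B_1)}\le\frac1k$ and $\Theta_{F_k}(x,0)\le\frac1k$, and a solution $u_k\in\mathfrak{J}(F_k,\phi_k,f_k)(B_1)$ (so $\|u_k\|_{L^\infty}\le1$, $u_k\ge\phi_k$, $u_k(0)=\phi_k(0)$) for which
\[
\max\left\{\|u_k-\phi_k\|_{L^\infty(B_{1/2})},\ \|D(u_k-\phi_k)\|_{L^\infty(B_{1/2})}\right\}>\iota.
\]
The first step is to extract limits. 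By Theorem \ref{GradThm} applied to $u_k$ on $B_1$ (the right-hand side $f_k\chi_{\{u_k>\phi_k\}}$ is bounded by $1/k$ in $L^\infty$), the family $\{u_k\}$ is bounded in $C^{1,\sigma}(B_{3/4})$ uniformly, so up to a subsequence $u_k\to u_\infty$ in $C^1_{\mathrm{loc}}(B_{3/4})$. Since $\|\phi_k\|_{C^{1,\alpha}}\le\frac12$, by Arzel\`a--Ascoli $\phi_k\to\phi_\infty$ in $C^1_{\mathrm{loc}}$ with $\|\phi_\infty\|_{C^{1,\alpha}}\le\frac12$. By Lemma \ref{lem.stab}, $F_k\to F_0$ locally uniformly for some operator $F_0$ satisfying \eqref{UnifEllip} and \eqref{eqxdep}; moreover, since $\Theta_{F_k}(x,0)\to0$ and the moduli are uniform, $\Theta_{F_0}(x,0)\equiv0$, i.e. $F_0$ is translation invariant (independent of $x$). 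The $C^1$ convergence preserves the obstacle constraints: $u_\infty\ge\phi_\infty$ in $B_{3/4}$ and $u_\infty(0)=\phi_\infty(0)$, so $0$ is a contact point for the limit.

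Next, identify the equation for the limit. By stability of viscosity solutions under the above convergences, $u_\infty$ solves $|Du_\infty|^\gamma F_0(D^2u_\infty)=0$ in $B_{3/4}$ (the right-hand side vanishes in the limit because $\|f_k\|_\infty\to0$, regardless of the behaviour of the sets $\{u_k>\phi_k\}$). Now set $v_\infty\defeq u_\infty-\phi_\infty\ge0$; I want to show $v_\infty\equiv0$ near $0$, which will contradict the inequality $\max\{\|v_k\|_{L^\infty(B_{1/2})},\|Dv_k\|_{L^\infty(B_{1/2})}\}>\iota$ passing to the limit (here one uses $C^1$ convergence on $\overline{B_{1/2}}$). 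To do this, first apply the Cutting Lemma (Lemma \ref{cutting}) to conclude that $u_\infty$ is a viscosity solution of the uniformly elliptic equation $F_0(D^2u_\infty)=0$ in $B_{3/4}$. Then I claim $v_\infty=u_\infty-\phi_\infty$ is a viscosity supersolution of a uniformly elliptic equation with bounded right-hand side: wherever one touches $v_\infty$ from below by a $C^2$ test function $\psi$ at a point $x_0$, the function $\psi+\phi_\infty$ touches $u_\infty$ from below there, so $F_0(D^2\psi(x_0)+D^2\phi_\infty(x_0))\le0$; using \eqref{UnifEllip} this gives $\mathcal{M}^+_{\lambda,\Lambda}(D^2\psi(x_0))\ge F_0(D^2\psi(x_0)+D^2\phi_\infty(x_0))-F_0(D^2\phi_\infty(x_0))+F_0(D^2\phi_\infty(x_0))\ge\ldots$ — more cleanly, $\mathcal{M}^-_{\lambda,\Lambda}(D^2\psi(x_0))\le F_0(D^2\psi(x_0)+D^2\phi_\infty(x_0))-F_0(D^2\phi_\infty(x_0))\le -F_0(D^2\phi_\infty(x_0))$, and the right-hand side is bounded in $L^\infty$ by the second inequality in \eqref{eq.assumption} (for the limiting obstacle, using that $|D\phi_\infty|^\gamma F_0(D^2\phi_\infty)\le\|f\|_\infty$ passes to the limit, and splitting into the sets where $|D\phi_\infty|$ is small or not). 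Hence $v_\infty$ is a nonnegative viscosity supersolution of $\mathcal{M}^-_{\lambda,\Lambda}(D^2v_\infty)\le C$ in a neighborhood of $0$ with $v_\infty(0)=0$.

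The last step is to rule this out. The Strong Maximum Principle (or Harnack inequality) for uniformly elliptic equations does not immediately apply because the right-hand side $C$ is only bounded, not zero. To handle this I would instead argue that $v_\infty$ actually solves a \emph{homogeneous} equation near $0$: on the open set $\{v_\infty>0\}$ we have $u_\infty>\phi_\infty$, and there the limit equation is genuinely $|Du_\infty|^\gamma F_0(D^2u_\infty)=0$; more to the point, since $u_k>\phi_k$ on an open set containing any point where $v_\infty>0$ (by $C^1$ convergence), on that region the right-hand side $f_k\chi_{\{u_k>\phi_k\}}=f_k\to0$, so... this still only gives boundedness. The cleaner route, which is the one I expect the authors to take, is: by the Cutting Lemma $u_\infty$ solves the uniformly elliptic $F_0(D^2u_\infty)=0$, and we may choose the obstacle's contribution to be controlled so that $v_\infty$ is a supersolution of a uniformly elliptic operator and then invoke that a nonnegative supersolution of $\mathcal{M}^-_{\lambda,\Lambda}(D^2v)\le C|Dv|^{?}$... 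Given the hint in the introduction — ``one cannot conclude via a strong maximum principle for $v=u-\phi$; we overcome this by the Cutting Lemma and by showing the profile is a supersolution of a fully nonlinear uniformly elliptic equation'' — the intended argument is precisely that after cutting, $v_\infty$ satisfies a uniformly elliptic (possibly nonlinear) equation to which the Strong Maximum Principle applies, forcing $v_\infty\equiv0$ in a neighborhood of $0$ since it vanishes there. That contradicts $\|v_\infty\|_{L^\infty(B_{1/2})}\ge\iota$ or $\|Dv_\infty\|_{L^\infty(B_{1/2})}\ge\iota$. \textbf{The main obstacle} is exactly this last point: getting from the degenerate equation satisfied by $u_k$ (whose right-hand side involves the characteristic function $\chi_{\{u_k>\phi_k\}}$, which need not converge) to a clean uniformly elliptic equation \emph{for the profile} $v_\infty$ for which a Strong Maximum Principle is available — the Cutting Lemma is the device that bridges the degeneracy, and care is needed to transfer the supersolution property through the touching-from-below argument and control the obstacle's second-derivative contribution using \eqref{eq.assumption}.
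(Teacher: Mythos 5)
Your strategy---contradiction, compactness via Theorem \ref{GradThm} and Lemma \ref{lem.stab}, stability of viscosity solutions, the Cutting Lemma, and a strong maximum principle at the contact point $0$---is the same as the paper's, but you do not actually close the argument, and the obstruction you run into comes from a concrete oversight. Along the contradiction sequence the obstacle satisfies $|D\phi_k|^\gamma F_k(x,D^2\phi_k)\le\|f_k\|_{L^\infty(B_1)}\le 1/k$, so by stability the \emph{limiting} obstacle satisfies the homogeneous inequality $|D\phi_\infty|^\gamma F_0(D^2\phi_\infty)\le 0$, not merely ``$\le\|f\|_\infty$'' as you write. Applying the Cutting Lemma \ref{cutting} to $\phi_\infty$ as well as to $u_\infty$ then produces two clean homogeneous uniformly elliptic inequalities, $F_0(D^2\phi_\infty)\le 0$ and $F_0(D^2u_\infty)=0$, and \cite[Theorem 5.3]{CC95} turns the nonnegative difference $v_\infty=u_\infty-\phi_\infty$ into a viscosity supersolution of a \emph{homogeneous} Pucci equation in $B_{1/2}$; since $v_\infty(0)=0$, the strong minimum principle gives $v_\infty\equiv0$ there, contradicting the limit of the flatness inequality. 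In short, the missing idea is that the smallness of $f_k$ is inherited by the obstacle's differential inequality---this is exactly why the right-hand side of \eqref{eq.assumption} is assumed for this lemma---whereas with a right-hand side that is only bounded (your constant $C$) the strong maximum principle indeed fails, as you yourself observe; your proposal therefore stops one step short of a proof.

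Two smaller points. Your touching argument (``$\psi+\phi_\infty$ touches $u_\infty$ from below, hence $F_0(D^2\psi(x_0)+D^2\phi_\infty(x_0))\le0$'') is not legitimate as written, because $\phi_\infty$ is only $C^{1,\alpha}$ and $D^2\phi_\infty(x_0)$ need not exist; the correct device is the difference theorem for viscosity sub/supersolutions, \cite[Theorem 5.3]{CC95}, which the paper cites in a footnote for precisely this reason. Relatedly, the sign bookkeeping in that step deserves care: your own computation yields $\mathcal{M}^-_{\lambda,\Lambda}(D^2\psi(x_0))\le -F_0(D^2\phi_\infty(x_0))$, which is a genuine supersolution inequality only when $F_0(D^2\phi_\infty)\ge 0$, so one must track which side of \eqref{eq.assumption} survives the limit in the direction actually used to place $u_\infty$ and $\phi_\infty$ on opposite sides of the equation $F_0=0$.
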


\begin{proof} Suppose for sake of contradiction that the thesis of the lemma fails to hold. This means that for some $\iota_0\in (0, 1)$ we can find a sequence $\{u_k\}_k$ satisfying:
\begin{itemize}
\item $u_k\in \mathfrak{J}(F_k, \phi_k, f_k)(B_{1})$;
\item $\max\left\{\Theta_{F_k}(x, 0), \|f_k\|_{L^{\infty}(B_{1})}\right\} \leq \frac{1}{k}$;
\item $\|\phi_k\|_{C^{1, \alpha}(B_1)} \leq \frac{1}{2}$;
\item $|D \phi_k|^{\gamma}F(x, D^2 \phi_k) \leq \|f_k\|_{L^{\infty}(B_1)}$.
\end{itemize}
but
  \begin{equation}\label{Eqcont}
     \max\left\{\|u_k-\phi_k\|_{L^{\infty}\left(B_{\frac{1}{2}}\right)}, \|D(u_k-\phi_k)\|_{L^{\infty}\left(B_{\frac{1}{2}}\right)}\right\}  > \iota_0 \quad \forall\,\,\, k \in \mathbb{N}.
  \end{equation}

Recall that, by definition,
$$
    u_k(0) = \phi_{k}(0) \quad \text{and} \quad  -1\leq u_k\leq1.
$$
Hence, by H\"{o}lder regularity of solutions (see, \cite{BeDe14} and \cite{IS13}), up to a subsequence, $u_k \to u$  uniformly in $\overline{B_r}$. Furthermore, $\phi_{k} \to  \phi$ and $D \phi_{k} \to D\phi$ locally uniformly. Now, from Theorem \ref{GradThm} we can estimate
$$
  \|u_k\|_{C^{1, \sigma}\left(B_{\frac{1}{2}}\right)} \leq C(n, \lambda, \Lambda, \gamma)\left(\|u_k\|_{L^{\infty}(B_{1})}+\|f_k\|^{\frac{1}{\gamma+1}}_{L^{\infty}(B_{1})}\right)
$$
for some $\sigma>0$. Thus, up to a subsequence, $Du_k \to Du$ uniformly in $\overline{B_{\frac{1}{2}}}$. In particular, from \eqref{Eqcont} we conclude that
\begin{equation}\label{eq.cont}
 \max\left\{\|u-\phi\|_{L^{\infty}\left(B_{\frac{1}{2}}\right)}, \|D(u-\phi)\|_{L^{\infty}\left(B_{\frac{1}{2}}\right)}\right\}\geq \iota_0.
\end{equation}

On the other hand, owing to Lemma \ref{lem.stab}, there exists an elliptic operator $F_0$ satisfying \eqref{UnifEllip} and \eqref{eqxdep} (with $\omega \equiv 0$, i.e. $F_0$ has constant coefficients) such that $F_k \to F_0$ locally uniformly in $\text{Sym}(n)$ for all $x \in B_1$ fixed.

Now, by making use of stability results for viscosity solutions (see for example \cite[Corollary 2.7 and Remark 2.8]{BeDe14}), we have that
$$
    |D\phi|^{\gamma}F_0(D^2 \phi) \leq 0 \leq |Du|^{\gamma} F_0(D^2 u) \quad \text{in} \quad B_{\frac{1}{2}}\,\,\, \quad \mbox{and} \quad u(0) = \phi(0)
$$
and from the ``Cutting Lemma \ref{cutting}'' we conclude that
\[
F_0(D^2 \phi) \leq 0 \leq F_0(D^2 u) \quad \text{in} \quad B_{\frac{1}{2}},
\]
which implies that $u-\phi$ is a non-negative viscosity supersolution for the Pucci extremal operator $\mathcal{M}^-_{\frac{\lambda}{n},\Lambda}$\footnote{Notably this fact is not trivial if $\phi$ is not a $C^2$ function. See, \cite[Theorem 5.3]{CC95} for a proof.} in $B_{\frac{1}{2}}$. Finally, the Strong Maximum Principle for uniformly elliptic operators (see, \cite[Proposition 4.9]{CC95}) and the fact that  $u(0) = \phi(0)$ imply that $u\equiv\phi$ in $B_{1/2}$, which is a contradiction with \eqref{eq.cont} and the proof is finished.

\end{proof}

\begin{rem}[{\bf Smallness regime}]\label{SmallRegime} Let us comment on the scaling properties of our problem which allow us to set us under to hypotheses of the Flatness Lemma \ref{lem.flat} in the proof of Main Theorem \ref{main}. To this end, let $u$ be a viscosity solution of
$$
|D u|^\gamma F(x, D^2u) =  f(x)\chi_{\{u>\phi\}}  \textrm{ in } B_1.
$$

From Remark \ref{RemNormAss} we can suppose without loss of generality that $\|f\|_{L^{\infty}(B_1)}\leq \delta_0$, for any given $\delta_0>0$. Thus, it remains to show that $\Theta_F(x, y)$ is small enough. Now let us fix a point $x_0 \in \partial \{u>\phi\} \cap B_{1/2}$. Then, we define $v: B_1 \to \R$ as follows
$$
v(x) = u(\tau x+ x_0)
$$
for a parameter $\tau>0$ to be determined \textit{a posteriori}. Hence, it is easy to check that $v$ satisfies (in the viscosity sense)
$$
|D v|^\gamma F_{\tau, x_0}(x, D^2v) =  f_{\tau, x_0}(x)\chi_{\{v>\phi_{\tau, x_0}\}}  \quad \textrm{ in } \quad B_1,
$$
where
\[
\left\{
\begin{array}{rcl}
  F_{\tau, x_0}(x, X) & \defeq & \tau^{2}F\left(x_0+\tau x, \tau^{-2} X\right) \\
  f_{\tau, x_0}(x) & \defeq & \tau^{\gamma+2}f(x_0+\tau x) \\
  \phi_{\tau, x_0}(x) & \defeq & \phi(x_0+\tau x)\\
  v(0) & \defeq & \phi_{\tau, x_0}(0).
\end{array}
\right.
\]
Now, for given $\iota>0$ (which in our argument will be sufficiently small but fixed) and the corresponding $\delta_{\iota}>0$ in the statement of Flatness Lemma \ref{lem.flat}, choose
$$
 \tau \defeq \min \left\{\frac{1}{4}, \omega^{-1}\left(\frac{\delta_{\iota}}{C_F+1}\right)\right\}.
$$
Hence,
$$
   \Theta_{F_{\tau, x_0}}(x, y) = \sup_{\substack{X \in  Sym(n) \\ X \neq 0}}\frac{|F_{\tau, x_0}(x,X)-F_{\tau, x_0}(y,X)|}{\|X\|} \leq C_F\omega(\tau|x-y|)\leq \delta_{\iota}.
$$
Therefore, with such choice, as well as by using the Remark \ref{RemNormAss}, $v$, $F_{\tau, x_0}$, $f_{\tau, x_0}$ and $\phi_{\tau, x_0}$ fall into the assumptions of Flatness Lemma \ref{lem.flat}.
\end{rem}

\begin{rem}\label{RemarkTransl} By revisiting the proof of previous Flatness Lemma, we can prove that, under the smallness regime, Remark \ref{SmallRegime}, a similar approximation regime holds true for viscosity solutions of
$$
|Du + \overrightarrow{q}|^{\gamma}F(x, D^2 u) = f(x)\chi_{\{u>\phi\}} \quad \text{in} \quad B_1
$$
for any $\overrightarrow{q} \in \R^n$ arbitrary vector. Such a conclusion holds, one more time, by using compactness/stability/Cutting Lemma, as well as a dichotomic analysis similar to the one employed in \cite[Lemma 6]{IS13}, see also \cite[Lemma 5.1]{ART15} and \cite[Proposition 2.9]{BeDe14} for similar reasoning. For this reason, we have decided omit it here.
\end{rem}

The next Lemma establishes the first step of the geometric control on the growth of the gradient:

\begin{lem}\label{lem.firststep}
Suppose that the assumptions of Lemma \ref{lem.flat} are in force. Then, there exists $\rho\in \left(0,\frac{1}{2}\right)$ such that
\begin{equation}\label{Aproxcond}
\displaystyle\sup_{B_{\rho}} \left|u(x)-u(0)-D u(0)\cdot x\right|\leq \rho^{1+\beta}.
\end{equation}
\end{lem}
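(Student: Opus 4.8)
The plan is to derive \eqref{Aproxcond} as a quantitative consequence of the Improvement of Flatness Lemma \ref{lem.flat}, exploiting the $C^{1,\sigma}$ estimate of Theorem \ref{GradThm} for the ``error'' between $u$ and its tangent plane at the origin. First I would note that, since $u \in \mathfrak{J}(F,\phi,f)(B_1)$ with $u(0)=\phi(0)$, the flatness lemma gives, for any prescribed $\iota$, that $\|u-\phi\|_{L^\infty(B_{1/2})}$ and $\|D(u-\phi)\|_{L^\infty(B_{1/2})}$ are both at most $\iota$ once $\delta_\iota$ is small enough; the smallness of $\Theta_F(x,0)$ and of $\|f\|_{L^\infty}$ is guaranteed by the scaling/normalization in Remarks \ref{RemNormAss} and \ref{SmallRegime}. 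Since $\phi\in C^{1,\alpha}(B_1)$ with $\|\phi\|_{C^{1,\alpha}(B_1)}\le \tfrac12$, one has the pointwise bound
\[
|\phi(x)-\phi(0)-D\phi(0)\cdot x|\le \tfrac12 |x|^{1+\alpha}\le \tfrac12 |x|^{1+\beta}\qquad\text{in }B_1,
\]
using $\beta\le\alpha$ and $|x|\le 1$. Combining this with the flatness estimate, for $x\in B_\rho$ (with $\rho$ to be chosen),
\[
|u(x)-u(0)-D\phi(0)\cdot x|\le |u(x)-\phi(x)|+|\phi(x)-\phi(0)-D\phi(0)\cdot x|+|u(0)-\phi(0)|\le \iota + \tfrac12\rho^{1+\beta},
\]
and similarly $|Du(0)-D\phi(0)|\le \iota$, so that $|(Du(0)-D\phi(0))\cdot x|\le \iota\rho$. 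Hence
\[
\sup_{B_\rho}|u(x)-u(0)-Du(0)\cdot x|\le \iota + \iota\rho + \tfrac12\rho^{1+\beta}\le 2\iota + \tfrac12\rho^{1+\beta}.
\]
The conclusion \eqref{Aproxcond} then follows by first fixing $\rho\in(0,\tfrac12)$ small enough that $\tfrac12\rho^{1+\beta}\le \tfrac12\rho^{1+\beta}$ is comfortably below $\rho^{1+\beta}$ — say so that $\tfrac12\rho^{1+\beta}\le \tfrac34\rho^{1+\beta}$ trivially, leaving room $\tfrac14\rho^{1+\beta}$ — and then choosing $\iota=\iota(\rho)$ so small that $2\iota\le \tfrac14\rho^{1+\beta}$; the associated $\delta_\iota$ from Lemma \ref{lem.flat} fixes the smallness regime.

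There is one subtlety I would be careful about: the crude argument above only controls $u-u(0)-Du(0)\cdot x$ by a constant ($\iota$) plus the obstacle's Taylor remainder, which is not by itself of order $\rho^{1+\beta}$ unless $\iota$ is chosen \emph{after} $\rho$. This is exactly what one does — the order of quantifiers is $\iota$ small depending on $\rho$, and $\rho$ is a fixed universal radius depending only on $\beta$ (hence on $n,\lambda,\Lambda,\gamma,\alpha$). I would present it in that order to make the dependence transparent. An alternative, slightly sharper route would be to rescale: set $w(x)=(u-\phi)(x/2)$ on $B_1$, observe via the Cutting Lemma argument used in the proof of Lemma \ref{lem.flat} that $w$ is a nonnegative supersolution of a uniformly elliptic equation with small right-hand side, apply the $C^{1,\sigma}$ estimate of Theorem \ref{GradThm} (or the corresponding estimate for the Pucci operator) to get $\|w\|_{C^{1,\sigma}(B_{1/2})}\le C(\|w\|_{L^\infty}+\|f\|_{L^\infty}^{1/(\gamma+1)})$, and then use $\|w\|_{L^\infty}\le\iota$ from flatness together with interpolation to bound the tangent-plane error of $u-\phi$ by $C\iota^{\theta}\rho^{1+\sigma}$ for some $\theta\in(0,1)$; absorbing $\sigma$ against $\beta$ (recall $\beta$ can be taken $\le\sigma$, or one works with $\min\{\sigma,\beta\}$) again yields \eqref{Aproxcond}.

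The main obstacle is conceptual rather than computational: one must resist the temptation to iterate this estimate directly (as emphasized in the introduction, $v\mapsto |Dv|^\gamma F(x,D^2v)$ is not affine-invariant, so $v_k(x)=(u-\mathfrak{l}_k)(\rho^k x)/\rho^{k(1+\beta)}$ does not solve a clean equation), and instead read off from \eqref{Aproxcond} only the oscillation control
\[
\sup_{B_\rho}|u(x)-u(0)|\le \rho^{1+\beta}+\rho\,|Du(0)|,
\]
which is the estimate that actually propagates. But for the present lemma itself, the argument is a direct combination of Lemma \ref{lem.flat}, the $C^{1,\alpha}$ regularity of $\phi$, and a correct ordering of the choices of $\rho$ and $\iota$.
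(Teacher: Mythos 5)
Your proposal is correct and follows essentially the same route as the paper: decompose $u(x)-u(0)-Du(0)\cdot x$ into $u-\phi$, the Taylor remainder of $\phi$, and $(Du(0)-D\phi(0))\cdot x$, bound the first and third terms by $\iota$ via the Improvement of Flatness Lemma, bound the obstacle's remainder by $\tfrac12\rho^{1+\alpha}\le\tfrac12\rho^{1+\beta}$, and then choose $\iota$ \emph{after} $\rho$ so that $2\iota\le\tfrac14\rho^{1+\beta}$. Your explicit attention to the order of quantifiers is exactly the point of the paper's argument (and your bookkeeping there is in fact slightly cleaner than the paper's choice $\iota\le\tfrac12\rho^{1+\beta}$).
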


\begin{proof}
Let $\iota>0$ to be chosen later. From Lemma \ref{lem.flat} we know that there exists $\delta_\iota>0$, such that whenever $\max\left\{\Theta_F(x, 0), \left\|f\right\|_{L^{\infty}(B_{1})}\right\} \leq \delta_{\iota}$, then \eqref{eq.flat} holds. Fixing $\rho\in \left(0,\frac{1}{2}\right)$ and $x\in B_{\rho}$ we compute
{\small{
$$
\left|u(x)-u(0)-D u(0)\cdot x\right| \leq |u(x)-\phi(x)| + |\phi(x)-\phi(0)-D\phi(0)\cdot x|+ |(D\phi-D u)(0)\cdot x|
$$
}}
so that
\[
\sup_{B_\rho}\left|u(x)-u(0)-D u(0)\cdot x\right| \leq \sup_{B_\rho}|\phi(x)-\phi(0)-D\phi(0)\cdot x|+ 2\iota
\]
where $C>0$ is a universal constant provided that $\max\left\{\Theta_F(x, 0), \left\|f\right\|_{L^{\infty}(B_{1})}\right\}\leq\delta_{\iota}$. Now according to the regularity of the obstacle and our normalizing assumption we have
$$
\begin{array}{ccc}
 \displaystyle \sup_{B_\rho}|\phi(x)-\phi(0)-D\phi(0)\cdot x|  & \leq & \frac{1}{2}\rho^{1+\alpha} \\
   & \leq & \frac{1}{2}\rho^{1+\beta},
\end{array}
$$
and hence
\[
\sup_{B_\rho}\left|u(x)-u(0)-D u(0)\cdot x\right| \leq \frac{1}{2}\rho^{1+\beta}+2\iota.
\]

By fixing
\[
\iota\leq \frac{1}{2}\rho^{1+\beta}
\]
we conclude the proof.
\end{proof}

As mentioned in the Introduction, the previous lemma is actually not enough to iterate so we prove the following simple consequence of it that will serve as the correct estimate:
\begin{cor}\label{c3.1}
Under the assumptions of Lemma \ref{lem.firststep} one has
$$
\displaystyle \sup_{B_{\rho}}\left|u(x)-u(0)\right|\leq\rho^{1+\beta}+\rho|D u(0)|,
$$
where $\rho\in \left(0,\frac{1}{2}\right)$.
\end{cor}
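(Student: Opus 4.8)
The plan is to derive Corollary \ref{c3.1} as an immediate consequence of the flatness estimate \eqref{Aproxcond} from Lemma \ref{lem.firststep}, by simply absorbing the linear term using the triangle inequality. Since \eqref{Aproxcond} already controls $|u(x)-u(0)-Du(0)\cdot x|$ by $\rho^{1+\beta}$ for $x \in B_\rho$, the work reduces to estimating the contribution of the tangent-plane correction $Du(0)\cdot x$.

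The key steps are as follows. First I would fix $x \in B_\rho$ and write the elementary splitting
\[
|u(x)-u(0)| \leq |u(x)-u(0)-Du(0)\cdot x| + |Du(0)\cdot x|.
\]
Second, I would bound the first term on the right by $\rho^{1+\beta}$ using \eqref{Aproxcond} from Lemma \ref{lem.firststep}, whose hypotheses are exactly those in force here. Third, I would bound the second term via Cauchy--Schwarz, $|Du(0)\cdot x| \leq |Du(0)|\,|x| \leq \rho\,|Du(0)|$, since $x \in B_\rho$. Combining these and taking the supremum over $B_\rho$ yields
\[
\sup_{B_\rho}|u(x)-u(0)| \leq \rho^{1+\beta} + \rho|Du(0)|,
\]
which is the claimed estimate.

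There is essentially no obstacle here: the statement is purely a bookkeeping consequence of the previously established flatness bound, included because (as explained in the introduction) the affine-normalized rescalings $v_k$ do not satisfy a controlled equation, so one must track the oscillation of $u$ rather than its distance to a moving tangent plane. The only thing to be careful about is that the constants are genuinely universal --- but this is inherited directly from Lemma \ref{lem.firststep}, where $\rho$ and the implicit $\iota$ were already fixed in terms of the universal parameters $n, \lambda, \Lambda, \gamma$ and the exponent $\beta$ from \eqref{eq.beta}.
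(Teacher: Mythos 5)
Your argument is correct and coincides with the paper's own proof: both apply the triangle inequality to split off the term $Du(0)\cdot x$, bound the remainder by $\rho^{1+\beta}$ via Lemma \ref{lem.firststep}, and bound $|Du(0)\cdot x|$ by $\rho|Du(0)|$. Nothing further is needed.
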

\begin{proof}
Using Lemma \ref{lem.firststep} we estimate
{\small{
$$
\begin{array}{rcl}
  \displaystyle \sup_{B_{\rho}} \left|u(x)-u(0)\right| & \leq &  \displaystyle \sup_{B_{\rho}} \left|u(x)-u(0)-D u(0)\cdot x\right|+\sup_{B_{\rho}}|D u(0)\cdot x| \\
   & \leq & \rho^{1+\beta}+\rho|D u(0)|.
\end{array}
$$}}
\end{proof}

In order to obtain a precise control on the influence of the magnitude of the gradient of $u$, we will iterate solutions (by using Corollary \ref{c3.1}) in $\rho$-adic balls. The proof is inspired by some ideas from \cite[Theorem 3.1]{AdaSRT19} and \cite[Theorems 1.1 and 1.3]{APR17}.

\begin{lem}\label{lem.dy} Under the assumptions of Lemma \ref{lem.firststep} one has
\begin{equation}\label{3.3}
\displaystyle\sup_{B_{\rho^k}}\left|u(x)-u(0)\right|\leq\rho^{k(1+\beta)}+|D u (0)|\sum_{j=0}^{k-1}\rho^{k+j\beta},
\end{equation}
where $\rho\in \left(0,\frac{1}{2}\right)$.
\end{lem}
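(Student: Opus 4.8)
The plan is to prove the estimate \eqref{3.3} by induction on $k$, using Corollary \ref{c3.1} as the base case and a rescaling argument for the inductive step. For $k=1$, the inequality \eqref{3.3} reads $\sup_{B_\rho}|u(x)-u(0)|\leq \rho^{1+\beta}+\rho|Du(0)|$, which is exactly Corollary \ref{c3.1} (the sum on the right has a single term $j=0$ equal to $\rho|Du(0)|$). So the base case is already in hand.

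\medskip

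For the inductive step, I would assume \eqref{3.3} holds for some $k\geq 1$ and all admissible solutions, and then consider the rescaled function
\[
u_k(x)\defeq \frac{u(\rho^k x)-u(0)}{\rho^{k}}.
\]
First I would check that $u_k$ lands in a class to which Lemma \ref{lem.flat} (hence Corollary \ref{c3.1}) applies. A direct computation shows $u_k$ solves $|Du_k|^\gamma F_k(x,D^2 u_k) = f_k(x)\chi_{\{u_k>\phi_k\}}$ in $B_1$ for the rescaled data $F_k(x,X)\defeq \rho^{k\gamma}F(\rho^k x, \rho^{-k}X)$, $f_k(x)\defeq \rho^{k(\gamma+1)}f(\rho^k x)$ and $\phi_k(x)\defeq \rho^{-k}(\phi(\rho^k x)-u(0))$; note $F_k$ retains the same ellipticity constants, and since $\rho<1/2$ the smallness conditions $\Theta_{F_k}(x,0)\leq \delta_\iota$ and $\|f_k\|_{L^\infty(B_1)}\leq\delta_\iota$ persist (they only improve under this rescaling), while the normalization $\|u_k\|_{L^\infty(B_1)}\leq 1$ follows from the inductive hypothesis at scale $k$ together with the bound on the gradient supplied by Theorem \ref{GradThm}; the obstacle $\phi_k$ still satisfies the $C^{1,\alpha}$ and supersolution bounds required in $\mathfrak{J}$. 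Here one uses that $u_k(0)=0=\phi_k(0)$ and $u_k\geq\phi_k$, which are inherited from the corresponding properties of $u$. With this in place, Corollary \ref{c3.1} applied to $u_k$ yields
\[
\sup_{B_\rho}|u_k(x)-u_k(0)|\leq \rho^{1+\beta}+\rho|Du_k(0)|,
\]
and scaling back (using $Du_k(0)=Du(0)$, which is immediate from the chain rule since the additive constant and the factor $\rho^k$ cancel appropriately — more precisely $Du_k(0)=\rho^{k}\rho^{-k}Du(0)$... I should be careful here: $Du_k(x)=\rho^k\cdot\rho^{-k}Du(\rho^k x)$, so $Du_k(0)=Du(0)$) gives
\[
\sup_{B_{\rho^{k+1}}}|u(x)-u(0)|\leq \rho^{k}\left(\rho^{1+\beta}+\rho|Du(0)|\right).
\]

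\medskip

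Finally I would combine this with the inductive hypothesis. The clean way is to telescope: write, for $x\in B_{\rho^{k+1}}$,
\[
|u(x)-u(0)|\leq |u(x)-u(0)|
\]
— rather, observe directly that the displayed bound above is $\rho^{k+1+\beta}+\rho^{k+1}|Du(0)|$, but to reach the full sum in \eqref{3.3} at level $k+1$ one must instead iterate the oscillation estimate rather than apply it once, or equivalently recenter. The cleaner route is to apply Corollary \ref{c3.1} to $u_k$ but keep track of the affine term: from Lemma \ref{lem.firststep} applied to $u_k$ one has $\sup_{B_\rho}|u_k(x)-u_k(0)-Du_k(0)\cdot x|\leq\rho^{1+\beta}$, hence $\sup_{B_\rho}|u_k(x)-u_k(0)|\leq\rho^{1+\beta}+\rho|Du(0)|$; combining with the inductive hypothesis $\sup_{B_{\rho^k}}|u(x)-u(0)|\leq \rho^{k(1+\beta)}+|Du(0)|\sum_{j=0}^{k-1}\rho^{k+j\beta}$ and the fact that points of $B_{\rho^{k+1}}$ correspond to points of $B_\rho$ under the rescaling, one gets
\[
\sup_{B_{\rho^{k+1}}}|u(x)-u(0)|\leq \rho^{k}\rho^{1+\beta}+\rho^{k}\cdot\rho^{1+\beta}\cdot\rho^{-1}\cdot 0 \cdots
\]
— here I would carefully bookkeep the two contributions: the new layer contributes $\rho^{k(1+\beta)}\cdot\rho^{1+\beta}=\rho^{(k+1)(1+\beta)}$ and $\rho^{k}\cdot\rho|Du(0)|$, while the old layers contribute $\rho^{k}$ times nothing new, so that after reindexing the sum $\sum_{j=0}^{k-1}\rho^{k+j\beta}$ becomes $\sum_{j=1}^{k}\rho^{(k+1)+j\beta}$, and adding the term $\rho^{k+1}|Du(0)|$ (the $j=0$ term) recovers $\sum_{j=0}^{k}\rho^{(k+1)+j\beta}$, which is \eqref{3.3} at level $k+1$. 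The main obstacle, and the step requiring genuine care, is precisely this bookkeeping of the geometric series together with verifying that the rescaled function $u_k$ genuinely satisfies the normalization $\|u_k\|_{L^\infty(B_1)}\leq1$ needed to reapply Lemma \ref{lem.flat}; this is where the inductive hypothesis and the gradient bound from Theorem \ref{GradThm} must be invoked in tandem, and getting the constants to close without degrading is the delicate point.
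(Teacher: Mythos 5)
Your overall strategy (induction, rescale at scale $\rho^k$, reapply Corollary \ref{c3.1}) is the same as the paper's, but the rescaling you chose does not close the induction, and this is a genuine gap rather than mere bookkeeping. With $u_k(x)=\rho^{-k}(u(\rho^kx)-u(0))$ the inductive hypothesis only gives
\[
\|u_k\|_{L^\infty(B_1)}\le \rho^{k\beta}+|Du(0)|\sum_{j=0}^{k-1}\rho^{j\beta},
\]
which is not $\le 1$ in general (the second term is of order $|Du(0)|/(1-\rho^\beta)$), so $u_k$ does not verify the normalization required to invoke Lemma \ref{lem.flat} and hence Corollary \ref{c3.1}; Theorem \ref{GradThm} does not rescue this. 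Worse, even granting the application, scaling back yields $\sup_{B_{\rho^{k+1}}}|u-u(0)|\le \rho^{k+1+\beta}+\rho^{k+1}|Du(0)|$, and $\rho^{k+1+\beta}$ is strictly weaker than the needed $\rho^{(k+1)(1+\beta)}=\rho^{k+1+(k+1)\beta}$. You notice both problems ("one must instead iterate\ldots or recenter", "the delicate point"), but the final display is not a valid computation and the claimed reindexing of the geometric series is asserted rather than derived from the single application of Corollary \ref{c3.1} you actually performed.

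The missing idea is the choice of normalizing factor: one should divide by the \emph{entire} right-hand side of the inductive hypothesis, i.e.\ set
\[
v_k(x)\defeq \frac{u(\rho^k x)-u(0)}{A_k},\qquad A_k\defeq \rho^{k(1+\beta)}+|Du(0)|\sum_{j=0}^{k-1}\rho^{k+j\beta}.
\]
Then $\|v_k\|_{L^\infty(B_1)}\le 1$ is exactly the inductive hypothesis, $v_k(0)=\phi_k(0)$, and the correspondingly rescaled $F_k,f_k,\phi_k$ satisfy the smallness conditions, so Corollary \ref{c3.1} applies and gives $\sup_{B_\rho}|v_k-v_k(0)|\le \rho^{1+\beta}+\rho\,|Dv_k(0)|$ with $Dv_k(0)=\rho^k Du(0)/A_k$. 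Undoing the scaling produces $A_k\rho^{1+\beta}+\rho^{k+1}|Du(0)|$, and it is precisely the factor $A_k\rho^{1+\beta}=\rho^{(k+1)(1+\beta)}+|Du(0)|\sum_{j=1}^{k}\rho^{(k+1)+j\beta}$ that, together with the new term $\rho^{k+1}|Du(0)|$ playing the role of $j=0$, reproduces the estimate at level $k+1$. Without this choice of denominator neither the normalization nor the recursion works out.
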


\begin{proof}
The proof will be by an induction argument. The case $k=1$ is precisely the statement of the Corollary \ref{c3.1}. Suppose now that \eqref{3.3} holds for all the values of $l=1,2,\cdots,k$. Our goal is to prove it for $l=k+1$. Define $v_k: B_1 \to \R$ given by
$$
  \displaystyle v_k(x)\defeq \frac{u(\rho^k x)-u(0)}{\rho^{k(1+\beta)}+|D u(0)|\sum\limits_{j=0}^{k-1}\rho^{k+j\beta}}.
$$
Note that
\begin{itemize}
  \item $v_k(0)=0$;
  \item $\|v_k\|_{L^{\infty}(B_1)}\le 1$ by induction hypothesis;
  \item $ D v_k(x)=\frac{\rho^{k} (D u)(\rho^kx)}{\rho^{k(1+\beta)}+|D u (0)|\sum\limits_{j=0}^{k-1}\rho^{k+j\beta}}$.
\end{itemize}
Now, by defining
\begin{itemize}
  \item $F_k(x, X) \defeq \frac{\rho^{2k}}{\rho^{k(1+\beta)}+|D u (0)|\sum\limits_{j=0}^{k-1}\rho^{k+j\beta}} F\left(\rho^k x, \left(\frac{\rho^{2k}}{\rho^{k(1+\beta)}+|D u (0)|\sum\limits_{j=0}^{k-1}\rho^{k+j\beta}}\right)^{-1}X\right)$;
  \item $f_k(x) \defeq \frac{\rho^{k(\gamma+2)} f(\rho^kx)}{\left(\rho^{k(1+\beta)}+|D u (0)|\sum\limits_{j=0}^{k-1}\rho^{k+j\beta} \right)^{\gamma+1}}$;
  \item $\phi_k(x) \defeq \frac{\phi(\rho^k x)-\phi(0)}{\rho^{k(1+\beta)}+|D u(0)|\sum\limits_{j=0}^{k-1}\rho^{k+j\beta}}$,
\end{itemize}
we obtain
$$
   |D v_k|^{\gamma} F_k(x, D^2 v_k) = f_k(x)\chi_{\{v_k>\phi_k\}} \quad \text{in} \quad B_1
$$
in the viscosity sense. Furthermore, it is easy to check that: $F_k$ satisfies \eqref{UnifEllip} and \eqref{eqxdep},
$$
v_k(0) = \phi_k(0), \,\,\,\max\left\{\Theta_{F_k}(x, 0), \|f_k\|_{L^{\infty}(B_{1})}\right\} \ll 1 \,\,\text{and}\,\,
  |D \phi_k|^{\gamma} F_k(x, D^2 \phi_k) \leq \|f_k\|_{L^{\infty}(B_{1})}.
$$
Therefore, $F_k, f_k, \phi_k$ fall into the assumptions of Flatness Lemma \ref{lem.flat}. For this reason, we can apply Corollary \ref{c3.1} to $v_k$ and obtain
$$
\displaystyle \sup_{B_{\rho}} \left|v_k(x)-v_k(0)\right|\leq\rho^{1+\beta}+\rho|D v_k(0)|,
$$
which implies
$$
\displaystyle\sup_{B_{\rho}}\frac{|u(\rho^k x)-u(0)|}{\rho^{k(1+\beta)}+|D u (0)|\sum\limits_{j=0}^{k-1}\rho^{k+j\beta}}\leq \rho^{1+\beta}+\frac{\rho^{k+1}|D u(0)|}{\rho^{k(1+\beta)}+|D u(0)|\sum\limits_{j=0}^{k-1}\rho^{k+j\beta}},
$$
which, by scaling back provides
$$
  \displaystyle\sup_{B_{\rho^{k+1}}}|u(x)-u(0)| \leq  \rho^{(k+1)(1+\beta)}+ |D u(0)|\sum\limits_{j=0}^{k}\rho^{k+1+j\beta},
$$
thereby completing the $(k+1)-$step of induction.
\end{proof}

The next result leads to a sharp regularity estimate in the singular zone.

\begin{lem}\label{l3.3}
Suppose that the assumptions of Lemma \ref{lem.flat} are in force. Then, there exists a universal constant $M>1$ such that, for $\rho$ as in the conclusion of Lemma \ref{lem.flat},
$$
\displaystyle \sup_{B_{r}}|u(x)-u(0)|\leq Mr^{1+\beta}\left(1+|D u(0)|r^{-\beta}\right),\,\,\forall r\in(0,\rho),
$$
where $\rho \left(0,\frac{1}{2}\right)$.
\end{lem}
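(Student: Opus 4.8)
The plan is to upgrade the discrete, $\rho$-adic control of Lemma~\ref{lem.dy} into a bound valid at every scale $r\in(0,\rho)$, by the routine device of interpolating between two consecutive dyadic radii. There is no delicate PDE input left at this stage; the entire analytic content sits in Lemma~\ref{lem.dy}, and Lemma~\ref{l3.3} is just its repackaging into a continuous-scale form.

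First I would simplify the right-hand side of \eqref{3.3}. Since $0<\rho<\tfrac12$ and $\beta\in(0,1]$ we have $\rho^{\beta}<1$, so the geometric sum is summable and
\[
\sum_{j=0}^{k-1}\rho^{\,k+j\beta}=\rho^{k}\sum_{j=0}^{k-1}(\rho^{\beta})^{j}\le\frac{\rho^{k}}{1-\rho^{\beta}},
\]
whence Lemma~\ref{lem.dy} already yields, for every $k\in\mathbb{N}$,
\[
\sup_{B_{\rho^{k}}}|u(x)-u(0)|\le\rho^{k(1+\beta)}+\frac{|Du(0)|}{1-\rho^{\beta}}\,\rho^{k}.
\]

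Next, given $r\in(0,\rho)$, I would pick the unique integer $k\ge1$ with $\rho^{k+1}<r\le\rho^{k}$ (possible precisely because $r<\rho$), and then use that $r\mapsto\sup_{B_{r}}|u-u(0)|$ is nondecreasing together with $\rho^{k}<\rho^{-1}r$. Substituting $\rho^{k(1+\beta)}<\rho^{-(1+\beta)}r^{1+\beta}$ and $\rho^{k}<\rho^{-1}r$ into the previous display gives
\[
\sup_{B_{r}}|u(x)-u(0)|\le\rho^{-(1+\beta)}r^{1+\beta}+\frac{\rho^{-1}}{1-\rho^{\beta}}\,|Du(0)|\,r
= r^{1+\beta}\Big(\rho^{-(1+\beta)}+\frac{\rho^{-1}}{1-\rho^{\beta}}\,|Du(0)|\,r^{-\beta}\Big).
\]
Taking $M\defeq\max\{\rho^{-(1+\beta)},\ \rho^{-1}(1-\rho^{\beta})^{-1}\}$ then finishes the argument: this $M$ depends only on $\rho$ --- hence only on $n,\lambda,\Lambda,\gamma$ --- and satisfies $M\ge\rho^{-(1+\beta)}>2^{1+\beta}>1$, so it is a universal constant strictly larger than $1$, exactly as claimed.

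I do not expect any genuine obstacle here. The only points requiring a little care are that $\rho^{\beta}<1$, so the series converges and $1-\rho^{\beta}$ is a strictly positive universal quantity; that $r<\rho$ forces $k\ge1$, which is what makes the dyadic sandwich legitimate; and that the final constant $M$ is genuinely universal. The reason the statement is isolated as a lemma is that this continuous-scale form (rather than the $\rho$-adic one) is what will combine cleanly with the non-degenerate, uniformly elliptic regime in the proof of Theorem~\ref{main}, where one dichotomizes according to whether $|Du(0)|$ is small or bounded below.
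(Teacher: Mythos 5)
Your argument is correct and is essentially the paper's own proof: both sandwich $r$ between consecutive $\rho$-adic radii $\rho^{k+1}<r\le\rho^{k}$, invoke Lemma \ref{lem.dy}, sum the geometric series using $\rho^{\beta}<1$, and absorb the resulting $\rho$-dependent factors into a universal $M$ (the paper takes $M=\rho^{-(1+\beta)}(1-\rho^{\beta})^{-1}$, which dominates your maximum). No gaps.
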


\begin{proof}
Firstly, fix any $r\in(0,\rho)$ and choose $k\in\mathbb{N}$ such that $\rho^{k+1}<r\leq\rho^{k}$. By using Lemma \ref{lem.dy}, we estimate
\begin{align*}
\sup_{B_r}\frac{|u(x)-u(0)|}{r^{1+\beta}} & \leq \frac{1}{\rho^{1+\beta}} \sup_{B_{\rho^k}}\frac{|u(x)-u(0)|}{\rho^{k(1+\beta)}} \\
										   & \displaystyle \leq \frac{1}{\rho^{1+\beta}} \left(1+|Du(0)|\rho^{-k(1+\beta)} \sum_{j=0}^{k-1}\rho^{k+j\beta} \right)\\
										   & \leq \frac{1}{\rho^{1+\beta}} \left(1+|Du(0)|\rho^{-k\beta}\sum_{j=0}^{k-1}\rho^{j\beta}\right)\\
										   & \leq \frac{1}{\rho^{1+\beta}} \left(1+|Du(0)|\rho^{-k\beta}\frac{1}{1-\rho^\beta}\right)\\
   										   & \leq M(1+|Du(0)|r^{-\beta})\\
\end{align*}
which finishes the proof by choosing $M \defeq \frac{1}{\rho^{1+\beta}(1-\rho^\beta)}>1$.
\end{proof}

Now we can give the proof of the main result of this manuscript, namely Theorem \ref{main}.

\begin{proof}[{\bf Proof of Theorem \ref{main}}]
Without loss of generality, we may assume that $x_0=0$. Notice that the degenerate ellipticity of the operator naturally leads us to separate the study into two different regimes depending on whether $|Du(0)|$ is ``small'' or not (cf. \cite[Theorem 1]{ALS15}).

Firstly, for the radius $r \in (0, \rho)$ fixed in the previous Lemma \ref{lem.dy}, we consider:
\vspace{0.4cm}

{\bf Case 1: $|D u(0)|\leq r^\beta$} (the ``singular zone'')

 By using Lemma \ref{lem.dy} we estimate
\begin{align*}
\sup_{B_r}\left|u(x)-u(0)-D u(0)\cdot x\right| & \leq \sup_{B_r}|u(x)-u(0)|+|D u(0)|r \\
												 &  \leq Mr^{1+\beta}\left(1+|D u(0)|r^{-\beta}\right)+r^{1+ \beta}\\
												 & \leq  3Mr^{1+\beta}\\
\end{align*}
as desired in this case.
\vspace{0.4cm}

{\bf Case 2: $|D u(0)|> r^\beta$} (the set of ``non-singular points'')

In this setting we have
\begin{equation}\label{EqSRE2}
     \displaystyle \sup_{B_{r_{\ast}}(0)} |u(x)-u(0)|\leq Cr_{\ast}^{1+\beta},
\end{equation}
for $0<r_{\ast}(r) < \frac{1}{4}$, where $C>0$ is a universal constant.

In effect, taking the particular case $r_{0} = \sqrt[\beta]{|D u(0)|}$ we are allowed to apply the previous case and conclude that
\begin{equation}\label{EqCzse1}
     \displaystyle \sup_{B_{r_{0}}(0)} |u(x)-u(0)|\leq Cr_{0}^{1+\beta}.
\end{equation}
As before, we define the re-scaled auxiliary functions:
$$
\left\{
\begin{array}{rcl}
u_{r_{0}}(x) & \defeq & \frac{u(r_{0}x)-u(0)}{r_{0}^{1+\beta}}\\
\phi_{r_{0}}(x) & \defeq & \frac{\phi(r_{0}x)-\phi(0)}{r_{0}^{1+\beta}}\\
 f_{r_{0}}(x) & \defeq & r_{0}^{1-\beta(\gamma+1)}f(r_{0}x)\\
 F_{r_0}(x, X)& \defeq & r_0^{1-\beta}F\left(r_0x, \frac{1}{r_0^{1-\beta}}X\right).
\end{array}
\right.
$$
Notice that
\begin{equation}\label{EqEstGrad}
  |Du_{r_{0}}(0)| = |D\phi_{r_{0}}(0)| = 1 \quad \mbox{and} \quad \|f_{r_0}\|_{L^{\infty}(B_1)} \leq 1.
\end{equation}
and $u_{r_{0}}$ is a viscosity solution to
$$
|D u_{r_{0}}|^{\gamma}F_{r_{0}}(x, D^2 u_{r_{0}}) = f_{r_{0}}(x)\chi_{\{u_{r_{0}}> \phi_{r_{0}}\}} \quad \text{in} \quad B_1
$$

Moreover, \eqref{EqCzse1} assures us that $u_{r_{0}}$ is uniformly bounded in the $L^{\infty}\left(B_{\frac{1}{2}}\right)-$topology. Thus, Theorem \ref{GradThm} it follows that
$$
   \|u_{r_{0}}\|_{C^{1, \sigma}\left(B_{\frac{1}{2}}\right)} \leq C_1^{\sharp} \quad (\mbox{universal constant}).
$$
Such an estimate and \eqref{EqEstGrad}, permit us to choice a radius $r_1 \ll 1$ (universal) so that
$$
   \mathfrak{c}\leq |D u_{r_{0}}(x)| \leq \mathfrak{c}^{-1} \quad \forall\,\,\,x \in B_{r_0}(z_0)\,\,\,\mbox{and}\,\,\,\mathfrak{c}\in (0, 1)\,\,\,\mbox{fixed}.
$$
In particular, we obtain (in the viscosity sense)
$$
   \min\left\{\mathrm{K}(x)-F_{r_0}(D^2 u_{r_{0}}), \,\, u_{r_{0}}-\phi_{r_{0}} \right\}=0,
$$
where $\mathrm{K}(x) = \mathrm{K}\left(\mathfrak{c}, \alpha, \gamma, \|f_{r_{0}}\|_{L^{\infty}(B_1)}\right)>0$.

Therefore, $u_{r_{0}}$ is a viscosity solution (uniformly bounded) to an obstacle-type problem for a uniformly elliptic and convex operator in $B_{r_0}(z_0)$ (with $F_{r_0}(x, 0) = 0$ for every $x \in B_1$), with a $C^{1,\alpha}$ obstacle ($\phi_{r_{z_0}, z_0}$) and uniform bounded source term $\mathrm{K}$. From theory for obstacle-type problems (see, \cite[Theorem 5.4]{BLOP})
$$
   \|u_{r_{0}}\|_{C^{1, \beta}\left(B_{r_0}\right)} \leq C(\alpha, \gamma, \Lambda, \lambda, n)
$$
By scaling back we conclude that
$$
   \|u\|_{C^{1, \beta}\left(B_{r_1r_{0}(0)}\right)} \leq C(\alpha, \gamma, \Lambda, \lambda, n),
$$
which implies that
$$
\displaystyle \sup_{B_r(0)} |u(x)-(u(0)+ D u(0)\cdot x)|\leq Cr^{1+\beta},
$$
for all $r<r_1r_{0}$.

Next, we prove the desired estimate when
\begin{equation}\label{EqintervR}
  r \in \left(r_1\sqrt[\beta]{|Du(0)|}, \sqrt[\beta]{|Du(0)|}\right).
\end{equation}
For this end, suppose that $r$ fulfils \eqref{EqintervR}. Hence,
$$
\begin{array}{rcl}
  \displaystyle \sup_{B_r(0)} |u(x)-(u(0)+ D u(0)\cdot x)| & \leq  & \displaystyle \sup_{B_{r_{1}}(0)} |u(x)-(u(0)+ D u(0)\cdot x)| \\
   & \leq  & Cr_{1}^{1+\beta}\\
   & \leq & \frac{C}{r_{1}^{1+\beta}}
   r^{1+\beta}.
\end{array}
$$
Thus, we obtain the desired estimate for all $r \in \left(0, \frac{1}{4}\right)$.

Finally, we obtain \eqref{detach} by computing the following
$$
\begin{array}{rcl}
  \displaystyle \sup_{B_r(x_0)} |u(x)-\phi(x)| & \leq  & \displaystyle \sup_{B_{r}(x_0)} |u(x)-[u(x_0)+ D u(x_0)\cdot (x-x_0)]|\\
  & + & \displaystyle  \sup_{B_{r}(x_0)} |\phi(x)-[\phi(x_0)+ D \phi(x_0)\cdot (x-x_0)]|\\
   & \leq  & (C+1)r^{1+\beta}.
\end{array}
$$
\end{proof}

As mentioned before, with the aid of Theorem \ref{main} we can prove the growth control on the gradient stated in Theorem \ref{sharpgrad}, thus obtaining a finer gradient control to solutions of \eqref{Eq1} near their free boundary points.

\begin{proof}[{\bf Proof of Theorem \ref{sharpgrad}}]  Let $x_0 \in  \partial \{u>\phi\} \cap B_{1/2}$ be an interior free boundary point. Now, we define the scaled auxiliary function $u_{r, x_0}: B_1 \to \R$ by:
$$
  u_{r, x_0}(x) \defeq \frac{u(x_0+rx)-u(x_0)-rx \cdot Du(x_0)}{r^{1+\beta}}.
$$
Now, observe that $u_{r, x_0}$ fulfils in the viscosity sense
$$
  |D u_{r, x_0}+\overrightarrow{q}_{r, x_0}|^{\gamma}F_{r, x_0}(x, D^2 u_{r, x_0}) = f_{r, x_0}(x)\chi_{\{u_{r, x_0}>\phi_{r, x_0}\}} \quad \text{in} \quad B_1,
$$
where
$$
\left\{
\begin{array}{rcl}
  F_{r, x_0}(x, X) & \defeq & r^{1-\beta}F\left(x_0+rx, \frac{1}{r^{1-\beta}}X\right) \\
  f_{r, x_0}(x)  & \defeq & r^{1-(\gamma+1)\beta}f(x_0 + r x)\\
  \phi_{r, x_0}(x) & \defeq & \frac{\phi(x_0+rx)-\phi(x_0)-rx \cdot D\phi(x_0)}{r^{1+\beta}}\\
  \overrightarrow{q}_{r, x_0} & \defeq & r^{-\beta}Du(x_0).
\end{array}
\right.
$$
From Remark \ref{RemarkTransl} and Theorem \ref{main} we get that
$$
\|u_{r, x_0}\|_{L^{\infty}\left(\frac{1}{4}\right)} \leq C.\left[\|u\|_{L^{\infty}(B_1)} + \left(\|\phi\|_{C^{1, \beta}(B_1)}^{\gamma+1}+\|f\|_{L^{\infty}(B_1)}\right)^{\frac{1}{\gamma+1}}\right].
$$
Finally, by invoking the gradient estimates (Theorem \ref{GradThm}) we obtain that
$$
\begin{array}{rcl}
  \displaystyle  \frac{1}{r^{\beta}} \sup_{B_{\frac{r}{8}}(x_0)} |D u(x)-Du(x_0)| & = & \displaystyle \sup_{B_{\frac{1}{8}}(x_0)} |D u_{r, x_0}(y)| \\
   & \leq  & C\left(n, \gamma, \lambda, \Lambda\right).\left(\|u_{r, x_0}\|_{L^{\infty}\left(\frac{1}{4}\right)} + \|f_{r, x_0}\|_{L^{\infty}\left(B_{\frac{1}{4}}\right)}^{\frac{1}{\gamma+1}}\right) \\
   & \le & C_0.\left[\|u\|_{L^{\infty}(B_1)} + \left(\|\phi\|_{C^{1, \beta}(B_1)}^{\gamma+1}+\|f\|_{L^{\infty}(B_1)}\right)^{\frac{1}{\gamma+1}}\right],
\end{array}
$$
thereby yielding the desired estimate.

For the second part of the Theorem, given $y \in \{u>\phi\}\cap B_{1/2}$, let us pick $z \in \partial (\{u>\phi\}\cap B_{1/2}) = \mathcal{B}$ such that
$$
r_0 \defeq |y-z| = \text{dist}(y, \mathcal{B}).
$$
Now, by using the previous estimates we have
$$
\begin{array}{rcl}
  \displaystyle \sup_{B_{r_0}(y)} |Du(x)-Du(x_0)| & \leq  & \displaystyle \sup_{B_{zr_0}(z)} |Du(x)-Du(x_0)|\\
    & \le & C.(2r_0)^{\frac{1}{\gamma+1}} \\
   & \le & C_0.\text{dist}(y, \mathcal{B})^{\frac{1}{\gamma+1}},
\end{array}
$$
which finishes the proof.
\end{proof}

\section{Non-degeneracy results}\label{sec.nondeg}

This Section is devoted to prove some geometric non-degeneracy properties that play an essential role in the description of solutions to free boundary problems of obstacle type.

\begin{proof}[{\bf Proof of Theorem \ref{ThmNonDeg}}]
Notice that, due to the continuity of solutions, it is sufficient to prove that such a estimate is satisfied just at point within $\{u>\phi\} \cap B_{1/2}$ and the estimate at $\partial{\{u>\phi\}} \cap B_{1/2}$ is obtained by a limiting procedure.

First of all, for $x_0 \in \{u>\phi\} \cap B_{1/2}$ let us define the scaled function
$$
    u_{r}(x) \defeq \frac{u(x_0+rx)}{r^{\frac{\gamma+2}{\gamma+1}}} \quad  \mbox{for} \quad  x \in B_1.
$$

Now, let us introduce the comparison function:
$$
   \displaystyle \Xi (x) \defeq \left\{\frac{ \mathfrak{m} \left(\gamma+1 \right)^{\gamma + 2} }{\left[n(\gamma+1)\Lambda\right]\left( \gamma+2\right)^{\gamma + 1}}\right\}^{\frac{1}{\gamma+1}} |x|^{\frac{\gamma+2}{\gamma+1}} + \frac{1}{r^{\frac{\gamma+2}{\gamma+1}}}\phi(x_0).
$$

Straightforward calculus shows that
$$
     | D \Xi |^\gamma\mathcal{G}_r(x, D^2 \Xi)\leq f_r\left( x  \right) \quad \text{in} \quad B_1
$$
and
$$
   | D u_{r}|^\gamma\mathcal{G}_r(x, D^2 u_{r})= f_r\left( x  \right)  \quad \text{in} \quad B_1\cap \{u_{r}> \phi_{r}\}
$$
in the viscosity sense, where
$$
\left\{
\begin{array}{rcl}
 \mathcal{G}_r(x , X) & \defeq  & r^{\frac{\gamma}{\gamma+1}}F\left(x_0+rx, r^{-\frac{\gamma}{\gamma+1}}X\right) \\
  f_r(x) & \defeq  & f(x_0 + rx) \\
  \phi_{r}(x) & \defeq  & \frac{\phi(x_0+rx)}{r^{\frac{\gamma+2}{\gamma+1}}}.
\end{array}
\right.
$$
Moreover, $\mathcal{G}_r$ satisfies the structural assumptions \eqref{UnifEllip} and \eqref{eqxdep}, and $\displaystyle \inf_{B_1} f_r \geq \inf_{B_1} f>0$.

Finally, if $u_{r} \leq \Xi$ on the whole boundary of $B_1 \cap \{u_{r}> \phi_{r}\}$, then the Comparison Principle (Lemma \ref{comparison principle}), would imply that
$$
   u_{r} \leq \Xi \quad \mbox{in} \quad B_1 \cap \{u_{r}> \phi_{r}\},
$$
which clearly contradicts the assumption that $u_{r}(0)>\phi_{r}(0)$. Therefore, there exists a point $Y \in \partial (B_1 \cap \{u_{r}> \phi_{r}\})$ such that
$$
      u_{r}(Y) > \Xi(Y) = \left\{\frac{ \mathfrak{m} \left(\gamma+1 \right)^{\gamma + 2} }{\left[n(\gamma+1)\Lambda\right]\left( \gamma+2\right)^{\gamma + 1}}\right\}^{\frac{1}{\gamma+1}}
$$
and scaling back we finish the proof of the Theorem.
\end{proof}

Next we will prove our second non-degeneracy result.

\begin{proof}[{\bf Proof of Theorem \ref{ThmNonDegHom}}]

By continuity it is enough to prove the result inside the set where $u$ and $\phi$ are detached. Let then $y \in \{u > \phi\}\cap B_{1/2}$ and $v(x) \defeq \phi(x) +\epsilon|x-y|^{1+\beta}$, where $0<\epsilon \ll 1$ is chosen such that $\displaystyle |Dv|^{\gamma}F(x, D^2 v)<0$ in the viscosity sense.

Now, by putting $r<\mbox{dist}(x_0, \partial B_{1/2})$, we obtain that
$$
   \displaystyle  |Dv|^{\gamma}F(x, D^2 v)<0 \le |Du|^{\gamma}F(x, D^2 u) \quad \mbox{in} \quad \{u>\phi\}\cap B_r(x_0)
$$
in the viscosity sense. Furthermore, $u(y) \geq \phi(y) = v(y)$. By invoking the Comparison Principle (Theorem \ref{Thm comparison principle}) it follows that there is $z_y\in \partial(\{u>\phi\}\cap B_r(x_0))$ such that $u(z_y) \geq v(z_y)$. Since $u <v$ on $B_r(x_0) \cap \partial \{u >\phi \}$ it must hold that $z_y \in \{u > \phi\} \cap \partial B_r(x_0)$ We conclude the proof by letting $y \to x_0$.
\end{proof}

As mentioned before, the porosity of the free boundary is a consequence of the non-degeneracy in the homogeneous case:

\begin{proof}[{\bf Proof of Corollary \ref{Cor3}}]
Let $x_0\in\partial\{u>\phi\}\cap B_{1/2}$ and pick $r$ small enough so that $B_{2r}(x_0)\subset\subset B_{1/2}$. By Theorem \ref{ThmNonDegHom} we have that there exists some $y\in\partial B_r(x_0)$ such that
\begin{equation}\label{porosity1}
u(y)-\phi(y)\geq c.r^{2}
\end{equation}
for some (universal) constant $c>0$.

On the other hand, the growth control proved in Corollary \ref{CorC1,1} gives
\begin{equation}\label{porosity2}
u(y)-\phi(y)\leq C.\big(\textrm{dist}(y,\partial\{u>\phi\})\big)^{2}.
\end{equation}

\eqref{porosity1} and \eqref{porosity2} together imply
\begin{equation}\label{porosity3}
\textrm{dist}(y,\partial\{u>\phi\})>{\bf C}.r
\end{equation}
and taking $\delta\defeq\frac{{\bf C}}{4}$ we obtain that $B_{2\delta r}(y)\cap B_{2r}(x_0)\subset \{u>\phi\}\cap B_{\frac{1}{2}}$ and the result is proved.
\end{proof}

\section{Some examples and extensions}\label{Examples}

In the sequel, we will present some examples where our results hold.

\begin{example} An interesting application of our results when $F(x, X)=\tr(X)$ and $f\equiv 1$. In that case, we get the seemingly simple problem
\[
  \left\{ \begin{array}{rcll}
  |Du|^\gamma\Delta u & = & \chi_{\{u>\phi\}} & \textrm{ in } B_1\\
     u & \geq  & \phi & \textrm{ in } B_1\\
     u & = & g & \textrm{ on } \partial B_1.
\end{array}\right.
\]
To the best of the authors' knowledge, no results whatsoever were available for this toy model. According to Theorem \ref{main}, if $\phi \in C^{1, \frac{1}{\gamma+1}}(\Omega)$ our results give $C^{1,\frac{1}{\gamma+1}}$ regularity, which is the optimal regularity of the unconstrained problem, see for instance \cite[Corollary 3.2]{ART15}, \cite[Example 1]{IS13} and the references therein. Moreover, the results hold true for a general bounded source term $f$.
\end{example}

\begin{example} Our results also hold for Pucci's extremal operators (see the Introduction):
$$
F(D^2u)\defeq \mathcal{M}_{\lambda, \Lambda}^{\pm}(D^2 u)
$$
and, more generally, cover Belmann's type equations, which appear in stochastic control problem as an optimal cost:
\[
\displaystyle F(D^2 u) \defeq \inf_{\hat{\alpha} \in \mathcal{A}}\left(\mathcal{L}^{\hat{\alpha}} u(x)\right) \quad \left(\text{resp.}\,\,\,\sup_{\hat{\alpha} \in \mathcal{B}}  \left(\mathcal{L}^{\hat{\alpha}} u(x)\right)\right),
\]
   and
      $$
         \displaystyle \mathcal{L}^{\hat{\alpha}} u(x) = \sum_{i, j=1}^{n} a_{ij}^{\hat{\alpha}}\partial_{ij} u(x)
      $$
is a family of uniformly elliptic translation invariant operators with ellipticity constants $\lambda$ and $\Lambda$.
\end{example}
\begin{example} Our results also hold for uniform elliptic operators with small ellipticity aperture: For such a class, interior local $C^{2,\alpha}$ \textit{a priori} estimates for solutions of fully nonlinear equations hold under the assumption that the ellipticity constants $(\lambda, \Lambda)$ do not deviate much, in the sense that $\mathfrak{e} \defeq 1 -\frac{\lambda}{\Lambda}$ is small enough (depending only on
dimension). Precisely, if the small ellipticity aperture is in force, then viscosity solutions to the equation
$$
   F(D^2 u) = 0 \quad \text{in} \quad B_1
$$
are $C_{\text{loc}}^{2,\alpha_0}(B_1)$ for some $\alpha_0 \in (0, 1)$. Furthermore, the following estimate holds
$$
   \|u\|_{C^{2,\alpha_0}\left(B_{\frac{1}{2}}\right)} \leq C.\|u\|_{L^{\infty}(B_1)},
$$
where $C>0$ and $\alpha_0$ depend only on dimension and the ellipticity constants.

This result is a consequence of a classical estimate by Cordes and Nirenberg. We recommend the interested reader to consult \cite[Chapter 5]{daS15} for a proof of this result as stated.

Of particular interest, such a result covers Isaac's type equations, which appear in stochastic control and in the theory of differential games:
      $$
      \displaystyle F(x, D^2 u) \defeq \sup_{\hat{\beta} \in \mathcal{B}} \inf_{\hat{\alpha} \in \mathcal{A}}\left(\mathcal{L}^{\hat{\alpha}\hat{\beta}} u(x)\right) \quad \left(\text{resp.}\,\,\,\inf_{\hat{\beta} \in \mathcal{B}} \sup_{\hat{\alpha} \in \mathcal{A}} \left(\mathcal{L}^{\hat{\alpha}\hat{\beta}} u(x)\right)\right),
      $$
      where
      $$
         \displaystyle \mathcal{L}^{\hat{\alpha}\hat{\beta}} u(x) = \sum_{i, j=1}^{n} a_{ij}^{\hat{\alpha}\hat{\beta}}(x)\partial_{ij} u(x)
      $$
      is a family of uniformly elliptic operators with H\"{o}lder continuous coefficients and ellipticity constants $\lambda$ and $\Lambda$ satisfying $\left|1 -\frac{\lambda}{\Lambda}\right| \ll 1$.
\end{example}

\begin{example} Recently, \cite[Theorem 1]{DD19} (see also \cite[Theorem 2]{DD19}) established local $C^{2, \alpha}$ \textit{a priori} estimates (in effect, Schauder type estimates to non-convex fully nonlinear operators) for flat viscosity solutions, i.e., solutions whose oscillations is very small, provided $F \in C^{1, \tau}(\text{Sym}(n))$ and has Dini continuous coefficients. Therefore, such a family of solutions and operators are an interesting class where our results hold true.
\end{example}

Regarding the hypothesis of Theorem \ref{main}, we can actually relax the convexity (or concavity) assumption on the nonlinearity $F$. To this purpose, the key ingredient is an available $C^{1, \alpha}$ (for any $\alpha \in (0, 1)$) regularity theory to
$$
  F(D^2u) = 0 \quad \text{in} \quad \Omega.
$$

Recently, Silvestre and Teixeira in \cite[Theorems 1.1 and 1.4]{ST15} addressed local $C^{1, \alpha}$ regularity estimates to problems with no convex/concave structure. In their approach, the novelty with respect to the former results is the concept of \textit{recession} function (the tangent profile for $F$ at ``infinity'') given by
$$
  \displaystyle F^{\ast}(X) \defeq  \lim_{\tau \to 0+ } \tau F\left(\frac{1}{\tau}X\right).
$$
In this direction, the authors relaxed the hypothesis of $C^{1,1}$ \textit{a priori} estimates for solutions of the equations without dependence on $x$, by the hypothesis that $F$ is assumed to be ``convex or concave'' only at the ends of $\textit{Sym}(n)$, in other words, when $\|D^2 u\| \approx \infty$.
Precisely, they proved that if solutions to the homogeneous equation
$$
F^{\ast}(D^2 u) = 0  \quad \textrm{in} \quad B_1
$$
has $C^{1, \alpha_0}$ \textit{a priori} estimates (for some $\alpha_0 \in (0, 1]$), then viscosity solutions to
$$
 	F(D^2 u) = 0  \quad \textrm{in} \quad B_1 \quad (\text{resp.}  = f \in L^{\infty})
$$
are of class $C^{1, \hat{\alpha}}_{\text{loc}}$ for $\hat{\alpha} <\min\{1, \alpha_0\}$.  	

In conclusion, if the recession profile associated to $F$, which we are calling ``$F^{\ast}$'', enjoys $C^{1,1}$ \textit{a priori} estimates, then a ``good regularity theory'' is available to solutions of $F(D^2u) = 0$. For this reason, we are able to prove our results to operators under either relaxed or no convexity assumptions on $F$.

\begin{example}
As commented above our results hold for operators whose recession profile enjoys appropriate \textit{a priori} estimates (see, one more time, \cite[Theorems 1.1 and 1.4]{ST15}). For the sake of illustration, we will exhibit some operators and its recession counterpart.
Let $$0 < \sigma_1, \ldots , \sigma_n< \infty.$$
We have the following examples:
\begin{enumerate}
  \item[(E1)]({\bf $m$-momentum type operators})
Let $m$ be an odd number. \textit{The $m$-momentum type operator} given by
  $$
\displaystyle  F_m(D^2 u) = F_m(e_1(D^2 u), \cdots, e_n(D^2 u)) \defeq \sum_{j=1}^{n} \sqrt[m]{\sigma_j^m+e_j(D^2 u)^m}-\sum_{j=1}^{n} \sigma_j
  $$
defines a uniformly elliptic operator which is neither concave nor convex. Moreover,
$$
  \displaystyle F_m^{\ast}(X) = \sum_{j=1}^{n} e_j(X)
$$
the Laplacian operator.

\item[(E2)]({\bf Perturbation of ``non-isotropic'' Pucci's operators})
Let us consider
  $$
 F(D^2 u) = F(e_1(D^2 u), \cdots, e_n(D^2 u)) \defeq \sum_{j=1}^{n} \left[h(\sigma_j)e_j(D^2 u) + g(e_j(D^2 u))\right],
  $$
  where $h:[0, \infty) \to \R$ is a continuous function with $h(0) = 0$ and $g: \R \to \R$ is any Lipschitz function such that $g(0) = 0$. Notice that $F$ is uniformly elliptic operator.
  Moreover,
  $$
  \displaystyle F^{\ast}(X) = \sum_{j=1}^{n} h(\sigma_j)e_j(X),
  $$
which is, up a change of coordinates, the Laplacian operator.

\item[(E3)]({\bf Perturbation of the Special Lagrangian equation})
Given $h:\R_{+} \to \R$ a continuous function, the \textit{``perturbation'' of the Special Lagrangian equation}
$$
 F(D^2 u) =  F(e_1(D^2 u), \cdots, e_n(D^2 u)) \defeq \sum_{j=1}^{n} \left[h(\sigma_j)e_j(D^2 u) + \arctan(e_j(D^2 u))\right]
$$
defines a uniformly elliptic operator which is neither concave nor convex. Furthermore,
  $$
  \displaystyle F^{\ast}(X) = \sum_{j=1}^{n} h(\sigma_j)e_j(X),
  $$
which is precisely a ``perturbation'' of the Laplace operator.
\end{enumerate}
\end{example}

\begin{example}[{\cite[Example 3.7]{DSV}}]
When $F: \textit{Sym}(n) \to \R$ is smooth, the \textit{recession profile} $F^{\ast}$ should be understood as the ``limiting equation'' for the natural scaling on $F$. By way of illustration, for a number of operators, it is possible to check the existence of the limit
$$
  \mathfrak{A}_{ij} \defeq \lim_{\|X\|\to \infty} \frac{\partial F}{X_{ij}}(X).
$$
In this situation, $F^{\ast}(X) = \tr(\mathfrak{A}_{ij}X)$. As an example we can consider the operator:
$$
\displaystyle  F_m(e_1(D^2 u), \cdots, e_n(D^2 u)) \defeq \sum_{j=1}^{n} \sqrt[m]{1+e_j(D^2 u)^m}- n,
$$
where $m \in \mathbb{N}$ (an odd number). In such a case, $\displaystyle F^{\ast}(X) = \sum_{j=1}^{n} e_j(X)$ (the Laplacian).
\end{example}

We would like highlight that other interesting class of degenerate operators where our results work out is given by the \textit{$p-$Laplacian} (in its non-divergence form) with $\gamma = p-2$ (for $p > 2$)
     $$
     G_p(\xi, X) \defeq |\xi|^{\gamma}F_p(\xi, X)
     $$
where
$$
F_p(\xi, X)  \defeq \tr\left[\left(I+(p-2)\frac{\xi\otimes \xi}{|\xi|^2}\right)X\right] \quad \text{the Normalized p-Laplacian operator}.
$$
Moreover, for arbitrary $\nu \in \R^n$ such that $|\nu| = 1$ we have
$$
\displaystyle \left\langle I + (p-2)\frac{Du \otimes Du}{|Du|^2}\nu, \nu\right\rangle = \displaystyle 1 + (p-2)\frac{\langle \nu, Du \rangle^2 }{|Du|^2}.
$$
Therefore, we conclude that $F_p$ satisfies \eqref{UnifEllip} with $\lambda = \min\{p-1, 1\}$ and $\Lambda = \max\{p-1, 1\}$.

In its variational form, namely
$$
\Delta_p u = \text{div}(|\nabla u|^{p-2}\nabla u)
$$
such an operator appears for instance as the Euler-Lagrangian equation associated to minimizers of the $p-$energy functional:
$$
\left(g+W_0^{1,p}(\Omega), L^q(\Omega)\right) \ni (w, f) \mapsto \min \mathcal{J}_p(w) \,\,\, \text{with} \,\,\, \mathcal{J}_p(w) = \int_{\Omega}\left(\frac{1}{p}|\nabla w|^p + fw\right)dx.
$$

Problems governed by the $p-$Laplace operator have attracted a huge deal of attention in the last five decades  or so. It is also worth highlighting the series of fundamental works \cite{ATU18} and \cite{APR17}, where the authors address sharp regularity estimates to inhomogeneous problem
$$
-\Delta_p u  = f \in L^{\infty}(B_1) \quad (\text{and its normalized version}).
$$

Finally, by combining the qualitative results from \cite{APR17} (and references therein) with quantitative ones (regularity estimates) from \cite[Theorem 2]{ATU18} we obtain the following result (cf. \cite[Theorem 1]{ALS15} for an alternative approach):
\begin{thm} Let $u$ be a bounded viscosity solution to the obstacle type problem
\begin{equation*}
\left\{
\begin{array}{rcll}
  G_p(Du, D^2 u) & = & f(x)\chi_{\{u>\phi\}} & \textrm{ in } B_1 \\
  	u(x) & \geq & \phi(x) & \textrm{ in } B_1 \\
  u(x) & = & g(x) & \textrm{ on } \partial B_1,
\end{array}
\right.
\end{equation*}
with obstacle $\phi \in C^{1, \alpha}(B_1)$ and $f \in L^\infty(B_1)$. Then, $u \in C_{\text{loc}}^{1, \min\left\{\alpha, \frac{1}{p-1}\right\}}$, More precisely, for any point $x_0 \in \partial \{u>\phi\} \cap B_{\frac{1}{2}}$ there holds
{\small{
$$
       \displaystyle \sup_{B_r(x_0)} \frac{|u(x)-(u(x_0)+ D u(x_0)\cdot (x-x_0))|}{r^{1+\min\left\{\alpha, \frac{1}{p-1}\right\}}}\leq C.\left[\|u\|_{L^{\infty}(B_1)} + \left(\|\phi\|_{C^{1, \alpha}(B_1)}^{p-1}+\|f\|_{L^{\infty}(B_1)}\right)^{\frac{1}{p-1}}\right],
$$
}}
for $0<r < \frac{1}{2}$ where $C>0$ is a universal constant.
\end{thm}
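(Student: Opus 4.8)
The plan is to run the geometric scheme of Section \ref{sec.main} essentially word for word, since $G_p(\xi,X)=|\xi|^{\gamma}F_p(\xi,X)$ with $\gamma=p-2$ (so that $\gamma+1=p-1$) is a gradient--degenerate operator of exactly the structural type treated in this paper: its uniformly elliptic ``core'' $F_p$ satisfies \eqref{UnifEllip} with $\lambda=\min\{1,p-1\}$ and $\Lambda=\max\{1,p-1\}$, the only (harmless) difference being that $F_p$ depends on the \emph{direction} of the gradient rather than on the point $x$. In particular $G_p$ is translation invariant, so neither the stability statement Lemma \ref{lem.stab} nor an $x$--dependent Cutting Lemma is required; the scaling invariances of Remark \ref{RemNormAss} and Remark \ref{SmallRegime} carry over verbatim with $F_{\tau,x_0}(x,X)\defeq\tau^{2}F_p(\tau^{-2}X)$.

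First I would isolate the two analytic inputs that must replace those used in Section \ref{sec.main}. In place of the interior $C^{1,\sigma}$ estimate (Theorem \ref{GradThm}) one uses the corresponding a priori estimate for bounded viscosity solutions of $G_p(Du,D^2u)=f\in L^{\infty}(B_1)$, namely \cite[Theorem 2]{ATU18}; this is what drives the compactness in the Flatness Lemma and the gradient bound in Theorem \ref{sharpgrad}. In place of the optimal regularity for convex uniformly elliptic obstacle problems (\cite[Theorem 5.4]{BLOP}) one uses the sharp interior $C^{1,1/(p-1)}$ regularity for $-\Delta_p u=f\in L^{\infty}$ together with the associated $C^{1,\beta}$ estimate, $\beta=\min\{\alpha,1/(p-1)\}$, for the obstacle problem with a $C^{1,\alpha}$ obstacle and bounded source, which are supplied by \cite{APR17} and the references therein. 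Finally one uses the qualitative fact, also contained in \cite{APR17}, that solutions of $|Du|^{\gamma}F_p(Du,D^2u)=0$ are precisely $p$--harmonic functions, hence obey the Harnack inequality and the strong maximum principle.

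The core step is then the Improvement of Flatness Lemma, which I would prove by the same compactness/contradiction argument as Lemma \ref{lem.flat}: a putative counterexample sequence $u_k\in\mathfrak{J}(G_p,\phi_k,f_k)(B_1)$ with $\|f_k\|_{L^{\infty}}\to0$, uniformly bounded, converges (by \cite[Theorem 2]{ATU18}) in $C^{1}_{\text{loc}}(B_{1/2})$ to a $p$--harmonic limit $u_\infty$, while $\phi_k\to\phi_\infty$ in $C^{1}_{\text{loc}}$ with $|D\phi_\infty|^{\gamma}F_p(D\phi_\infty,D^2\phi_\infty)\le0$, $u_\infty\ge\phi_\infty$ and $u_\infty(0)=\phi_\infty(0)$. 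The hard part, and the step that genuinely differs from the paper, will be to conclude $u_\infty\equiv\phi_\infty$ on $B_{1/2}$: since $F_p(\xi,\cdot)$ is for each fixed $\xi$ a \emph{linear} uniformly elliptic operator, I expect the doubling--of--variables argument of \cite[Theorem 5.3]{CC95} to show, exactly as in the proof of Lemma \ref{lem.flat} and after cutting as in Lemma \ref{cutting}, that $w\defeq u_\infty-\phi_\infty\ge0$ is a viscosity supersolution of the Pucci operator $\mathcal{M}^{-}_{\frac{\lambda}{n},\Lambda}$ --- at an interior contact point $Dw=0$ forces $Du_\infty=D\phi_\infty$, so the gradient dependence of $F_p$ causes no interference. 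The strong maximum principle and $w(0)=0$ then give $w\equiv0$, contradicting the failure of \eqref{eq.flat}.

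Granting the Flatness Lemma, the remainder is routine and mirrors Section \ref{sec.main}: the first geometric estimate Lemma \ref{lem.firststep}, its Corollary, the dyadic iteration Lemma \ref{lem.dy} and the $\rho$--adic control Lemma \ref{l3.3} use only uniform ellipticity, the scaling of Remark \ref{SmallRegime} and the $C^{1,\sigma}$ input; the ``singular zone'' bound $|Du(0)|\le r^{\beta}$ is then immediate, and in the complementary regime $|Du(0)|>r^{\beta}$ one rescales at scale $r_0=|Du(0)|^{1/\beta}$ so that the gradient is pinned away from zero, reducing matters to the non-degenerate uniformly elliptic obstacle problem settled by \cite{APR17}. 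This produces \eqref{EqSRE} and \eqref{detach} with $\gamma+1=p-1$, which is the displayed estimate of the theorem, and the sharp gradient growth follows verbatim from the proof of Theorem \ref{sharpgrad} via Remark \ref{RemarkTransl}.
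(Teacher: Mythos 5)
Your proposal is correct and follows essentially the same route as the paper, which for this theorem offers only the one-line justification that the result ``follows by combining the qualitative results from \cite{APR17} with the quantitative ones from \cite[Theorem 2]{ATU18}'' --- i.e., exactly the substitution you make of those two references for Theorem \ref{GradThm} and for the non-degenerate/uniformly elliptic inputs, with the rest of the scheme of Section \ref{sec.main} run verbatim for $\gamma=p-2$. Your write-up is in fact a more detailed (and, at the strong-maximum-principle step, more carefully justified) expansion of the argument the paper intends, so there is nothing genuinely different to compare.
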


\section*{Appendix}

In spite of the fact that some existence results are available for fully nonlinear uniform elliptic obstacle type  problems (see e.g., \cite{BLOP} and \cite{L}), we could not find in the specialized literature corresponding references to degenerate one (cf. \cite[Theorem 1.1]{DSV}).

For this reason, although is not the main focus of our paper, in this appendix we will address existence/uniqueness and \textit{a priori} estimates of viscosity solutions to degenerate obstacle type problems as follows
\begin{equation}\label{Eq4.1}
  \left\{
\begin{array}{rcll}
  |D u|^\gamma F(x, D^2u)& = & f(x)\chi_{\{u>\phi\}} & \textrm{ in } B_1 \\
  	u(x) & \geq & \phi(x) & \textrm{ in } B_1 \\
  u(x) & = & g(x) & \textrm{ on } \partial B_1,
\end{array}
\right.
\end{equation}
where the fully nonlinear operator $F:B_1\times\text{Sym}(n) \longrightarrow \R$ is supposed to be uniformly elliptic fulfilling \eqref{UnifEllip} and \eqref{eqxdep}.

\begin{thm}[{\bf Existence of solutions with \textit{a priori} estimates}]\label{Thm1} Let $\phi \in C^{1, \alpha}(B_1)$, $f \in C^0(\overline{B_1})$ and $g\in C^{1,\varsigma}(\partial B_1)$ be for some $\alpha, \zeta\in(0,1]$. Suppose further that
$$
   \displaystyle \inf_{B_1} f>0 \,\,\,\text{or}\,\,\, \displaystyle \sup_{B_1} f<0 \,\,\, \text{and} \,\,\,|D \phi|^\gamma F(x, D^2 \phi) \geq 0 \quad \text{in}\,\,\,B_1 \,\,\,(\text{in the viscosity sense}).
$$

Then, there exists a viscosity solution $u$ to the obstacle type problem \eqref{Eq4.1}. Moreover, $u \in C^{1,\vartheta}(\overline{B_1})$ for some universal $\vartheta \in (0, 1)$ and the following \textit{a priori} estimate holds true
\[
   \displaystyle \|u\|_{C^{1,\vartheta}(\overline{B_1})} \leq C(\vartheta).\left(\|u\|_{L^{\infty}(B_1)} + \|g\|_{C^{1, \zeta}(\partial B_1)}+\|f\|_{L^{\infty}(B_1)}^{\frac{1}{\gamma+1}}\right).
\]
\end{thm}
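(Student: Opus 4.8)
\textbf{Proof proposal for Theorem \ref{Thm1}.} The plan is to build the solution via Perron's method applied to the degenerate obstacle problem and then upgrade to the stated $C^{1,\vartheta}$ \emph{a priori} bound using the interior estimates already established in the body of the paper together with a boundary regularity argument. First I would set up the relevant class of subsolutions: let
\[
\mathcal{A} \defeq \left\{ w \in C^0(\overline{B_1}) : w \text{ is a viscosity subsolution of } |Dw|^\gamma F(x,D^2w) = f(x)\chi_{\{w>\phi\}} \text{ in } B_1,\ w\ge\phi \text{ in } B_1,\ w\le g \text{ on } \partial B_1 \right\}.
\]
The hypothesis $|D\phi|^\gamma F(x,D^2\phi)\ge 0$ (together with $\inf f>0$ or $\sup f<0$) guarantees that $\mathcal{A}$ is nonempty: one can use $\phi$ itself, or a suitable translate, as a member, and on the other side a large constant (or the solution of the unconstrained equation with boundary datum $g$) serves as a supersolution barrier, so $\mathcal{A}$ is bounded above. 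Define $u \defeq \sup_{w\in\mathcal{A}} w$.

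Next I would verify, by the standard Perron machinery adapted to this degenerate setting, that $u$ is a viscosity solution of the obstacle problem. This splits into two claims: (i) $u$ is a subsolution (the usual closedness of the sup of subsolutions under the semicontinuous envelope, which requires stability of viscosity subsolutions of $|Du|^\gamma F = f\chi_{\{u>\phi\}}$ — available from the stability results quoted for these operators, e.g.\ \cite[Corollary 2.7]{BeDe14}); and (ii) $u$ is a supersolution, which is the ``bump'' argument: if $u$ failed to be a supersolution at some interior point, one could lift it slightly while preserving the constraint $u\ge\phi$ (here the complementarity with the obstacle must be tracked — at points where $u=\phi$, the obstacle inequality $|D\phi|^\gamma F(x,D^2\phi)\ge 0 = $ RHS makes the supersolution property automatic; at points where $u>\phi$ one argues exactly as in the classical Perron proof). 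The Comparison Principle (Theorem \ref{Thm comparison principle} / Lemma \ref{comparison principle}), which is where $\inf f>0$ or $\sup f<0$ is genuinely used, guarantees both uniqueness and the fact that the constructed $u$ attains the boundary datum continuously once appropriate barriers at $\partial B_1$ are produced from the $C^{1,\varsigma}$ regularity of $g$ and the smoothness of $\partial B_1$.

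For the \emph{a priori} estimate, I would argue locally and then patch with a boundary estimate. In the interior, at any free boundary point the sharp growth of Theorem \ref{main} (and the gradient growth of Theorem \ref{sharpgrad}) gives $C^{1,\beta}$ control; at points of $\{u>\phi\}$ bounded away from the free boundary the equation is just $|Du|^\gamma F(x,D^2u) = f(x)$, so Theorem \ref{GradThm} applies and yields a $C^{1,\sigma}$ bound with the right-hand side $\|u\|_{L^\infty(B_1)} + \|f\|_{L^\infty(B_1)}^{1/(\gamma+1)}$; inside $\{u<\phi\}^{\circ}$ one has $u\equiv\phi$, so $u$ inherits the $C^{1,\alpha}$ regularity of the obstacle. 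Combining these over a finite cover and taking $\vartheta$ to be the minimum of the exponents produced gives an interior $C^{1,\vartheta}$ bound. For the behavior up to $\partial B_1$ one uses the $C^{1,\varsigma}$ datum $g$ to construct affine-plus-power barriers touching $u$ from above and below on $\partial B_1$, and the Comparison Principle propagates these inward, giving the boundary contribution $\|g\|_{C^{1,\varsigma}(\partial B_1)}$ to the estimate; a standard covering/interpolation argument then merges the interior and boundary pieces into the global bound
\[
\|u\|_{C^{1,\vartheta}(\overline{B_1})} \leq C(\vartheta)\left(\|u\|_{L^{\infty}(B_1)} + \|g\|_{C^{1,\zeta}(\partial B_1)} + \|f\|_{L^{\infty}(B_1)}^{\frac{1}{\gamma+1}}\right).
\]

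The main obstacle I expect is verifying the supersolution property of the Perron envelope in a way that cleanly handles the coincidence set: one must ensure that the lifting/bump procedure never violates $u\ge\phi$, and that the degenerate nature of $|Du|^\gamma F$ (the operator vanishing where $Du=0$) does not obstruct the standard argument — this is precisely where the sign condition $|D\phi|^\gamma F(x,D^2\phi)\ge 0$ and the strict sign of $f$ are indispensable, since together they make the obstacle an admissible subsolution and force the complementarity to hold in the viscosity sense. A secondary technical point is the construction of boundary barriers for a degenerate equation; this can be done by the usual method of using the corresponding barriers for the Pucci extremal operators (valid because $|Du|^\gamma F$ is trapped between $|Du|^\gamma \mathcal{M}^\pm_{\lambda,\Lambda}$) and noting that the degeneracy only helps near flat parts of the barrier, so the classical sub/supersolution construction goes through with the exponent dictated by $\varsigma$.
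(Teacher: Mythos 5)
Your route (Perron's method applied directly to the constrained problem, then patching sharp interior estimates with boundary barriers) is genuinely different from the paper's, and it has two concrete gaps. First, the admissible class $\mathcal{A}$ is built from ``viscosity subsolutions of $|Dw|^\gamma F(x,D^2w)=f(x)\chi_{\{w>\phi\}}$'', but the right-hand side depends discontinuously on the unknown through the set $\{w>\phi\}$; the stability results you invoke (\cite[Corollary 2.7]{BeDe14}) are stated for continuous data, and neither the closedness of $\mathcal{A}$ under suprema nor the bump argument for the supersolution property is justified for such a discontinuous, solution-dependent source. This is precisely the difficulty the paper's proof is designed to avoid: it replaces $\chi_{\{u>\phi\}}$ by a smooth penalization $\Phi_\varepsilon(u_\varepsilon-\phi)\pm\varepsilon$, solves the resulting Dirichlet problems (there Perron's method \emph{is} legitimately used, for a frozen continuous right-hand side $f\Phi_\varepsilon(v_0-\phi)\pm\varepsilon$), produces $u_\varepsilon$ by Schauder's fixed point theorem, and only then passes to the limit, recovering $u\ge\phi$, the equation on $\{u>\phi\}$, and the vanishing of the operator on $\{u<\phi\}$ by stability and the Cutting Lemma \ref{cutting} combined with comparison. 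If you want to keep a direct Perron construction you would have to reformulate the problem (e.g.\ as a variational inequality or a $\min$-type equation) so that the notion of sub/supersolution is stable.

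Second, your derivation of the \emph{a priori} bound leans on Theorems \ref{main} and \ref{sharpgrad}, which require $F$ convex or concave and the obstacle to satisfy the second inequality of \eqref{eq.assumption}; neither hypothesis is assumed in Theorem \ref{Thm1}, so the argument is either circular or applies under strictly stronger assumptions than the statement. Moreover the claimed estimate is only $C^{1,\vartheta}$ for some non-sharp universal $\vartheta$, so no free-boundary analysis is needed: the paper obtains it directly from the global (up to the boundary) $C^{1,\vartheta}$ estimates of \cite[Theorem 1.1]{BeDe14} for the penalized problems, combined with the ABP estimate of \cite{DFQ09} to control $\|u_\varepsilon\|_{L^\infty}$ uniformly in $\varepsilon$, and then passes the bound to the limit. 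Your interior/boundary patching with barriers could in principle be made to work, but as written it both over-complicates the estimate and imports hypotheses the theorem does not grant. (A minor slip: on the coincidence set you write ``inside $\{u<\phi\}^{\circ}$ one has $u\equiv\phi$''; since $u\ge\phi$ that set is empty, and you presumably mean the interior of $\{u=\phi\}$.)
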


It is worth to highlight that the core ideas behind the proof were inspired/adapted in the ones from \cite[Theorem 3.3]{BLOP}, \cite[Theorem 1.1]{DSV} and \cite[Ch.1, \S 1.3.2]{PSU} and follow by rather standard and well known methods. For the reader's convenience we will present a somewhat complete sketch of the proof.

\begin{proof}
First, we consider the penalized problem
\begin{equation}\label{Eqep}
\left\{
\begin{array}{rcll}
  |D u_{\varepsilon}|^\gamma F(x, D^2u_{\varepsilon})& = & f(x)\Phi_\varepsilon(u_{\varepsilon}-\phi) \pm \varepsilon& \textrm{ in } B_1 \\
  u_{\varepsilon} & = & g & \textrm{ on } \partial B_1.
\end{array}
\right.
\end{equation}
with $\Phi_\varepsilon$ a smooth approximation of the Heaviside function, that is, take $\Phi:\R\longrightarrow[0,1]$ a monotone nondecreasing function satisfying
\[
   \Phi\in C^\infty(\R),\quad\Phi(s)=0\text{ for }s\leq 0,\quad\Phi(s)=1\text{ for }s\geq 1
\]
and define for $\varepsilon\in(0, 1),\: \Phi_\varepsilon(s) \defeq \Phi\left(\frac{s}{\varepsilon}\right)$.

Next we claim that the above problem has a viscosity solution that enjoys a basic regularity estimate (which is not sharp). Indeed, according to Perron's Method\footnote{Such an existence method has as a key ingredient the Comparison Principle tool (Theorem \ref{Thm comparison principle}). For this reason, we stress that we are under the assumptions of Lemma \ref{comparison principle} in order to ensure existence of solutions to auxiliary problem with $f_{\varepsilon}(x) \defeq f(x)\Phi_\varepsilon(v_0-\phi) + \varepsilon$ provided $\displaystyle \inf_{B_1} f>0$ and $f_{\varepsilon}(x) \defeq f(x)\Phi_\varepsilon(v_0-\phi) - \varepsilon$ provided $\displaystyle \sup_{B_1} f<0$.} and \cite[Theorem 1.1]{BeDe14}, it follows that for each $v_0 \in C^0(\overline{B_1})$, there exist $v \in C^{1, \vartheta}(\overline{B_1})$ (for a universal $\vartheta \in (0, 1)$) fulfilling in the viscosity sense
$$
\left\{
\begin{array}{rcll}
  |D v|^\gamma F(x, D^2 v)& = & f(x)\Phi_\varepsilon(v_0-\phi) \pm \varepsilon& \textrm{ in } B_1 \\
  v & = & g & \textrm{ on } \partial B_1.
\end{array}
\right.
$$
such that
\begin{equation}\label{EqEstUptoBound}
  \|v\|_{C^{1, \vartheta}(\overline{B_1})} \leq C(\vartheta).\left(\|v\|_{L^{\infty}(B_1)} + \|g\|_{C^{1, \zeta}(\partial B_1)}+\|f\Phi_{\varepsilon}(\cdot) +1\|_{L^{\infty}(B_1)}^{\frac{1}{\gamma+1}}\right)
\end{equation}

Next, since $\Phi_{\varepsilon}(\cdot) \in [0, 1]$, we can conclude by using \eqref{EqEstUptoBound} and Alexandroff-Bakelman-Pucci estimates (ABP for short) from \cite[Theorem 1]{DFQ09} that
$$
    \|v\|_{C^{1, \vartheta}(\overline{B_1})} \leq C\left(\vartheta, \|f\|_{L^{\infty}(B_1)}, \|g\|_{C^{1, \zeta}(\partial B_1)}\right),
$$
where $C$ does not depends on $v_0$.

In this point, by defining $\mathrm{T}: C^{1, \vartheta}(\overline{B_1}) \to C^{1, \vartheta}(\overline{B_1})$ given by $\mathrm{T}(v_0) = v$, we conclude that $\mathrm{T}$ maps the $C-$ball into itself and it is compact. Therefore, by Schauder's fixed point theorem, there exists $u_{\varepsilon}$ such that $\mathrm{T}(u_{\varepsilon}) = u_{\varepsilon}$, which is a viscosity solution to \eqref{Eqep}.

Now, one more time from \eqref{EqEstUptoBound} and ABP estimates we obtain
$$
\begin{array}{rcl}
  \|u_{\varepsilon}\|_{C^{1, \vartheta}(\overline{B_1})} & \leq & C.\left( \|u_{\varepsilon}\|_{L^{\infty}(B_1)} + \|g\|_{C^{1, \zeta}(\partial B_1)}+\|f.\Phi_{\varepsilon}(u_{\varepsilon}-\phi) +\varepsilon\|_{L^{\infty}(B_1)}^{\frac{1}{\gamma+1}}\right) \\
   & \leq & C.\left[\|g\|_{C^{1, \zeta}(\partial B_1)}+(\|f\|_{L^{\infty}(B_1)}+1)^{\frac{1}{\gamma+1}}\right] \\
   & \leq & C\left(\vartheta, \|f\|_{L^{\infty}(B_1)}, \|g\|_{C^{1, \zeta}(\partial B_1)}\right) \quad (\text{independent on}\,\,\, \varepsilon).
\end{array}
$$
Hence, the family $\{u_{\varepsilon}\}_{\varepsilon \in (0, 1)}$ of solutions to \eqref{Eqep} is uniformly bounded in $C^{1, \vartheta}(\overline{B_1})$. Therefore, Arzel\`{a}-Ascoli's compactness criterium ensures us the existence of a function $u \in C^{1, \vartheta}(\overline{B_1})$ and a subsequence $\{u_{\varepsilon_j}\}_{j \in \mathbb{N}}$ such that
\[
  u_{\varepsilon_j}\longrightarrow u\textrm{ in } C^{1,\vartheta'}(\overline{B_1}) \quad \mbox{for any}\,\,\, \vartheta^{\prime}<\vartheta.
\]

It remains to show that $u$ is a viscosity solution of the fully nonlinear degenerate obstacle problem \eqref{Eq4.1}. Note that $u = g$ on $\partial B_1$ since $\left. u_{\varepsilon_j}\right|_{\partial B_1} =  g$ and $u_{\varepsilon_j} \longrightarrow u$ uniformly in $\overline{B_1}$.

Now, we show that $u \geq \phi$ in $B_1$. In effect, by the uniform convergence of the $u_{\varepsilon_j}$, for each $k \in \mathbb{N}$ fixed, we have that
$$
  f(x)\Phi_{\varepsilon_j}(u_{\varepsilon_j}-\phi)\pm\varepsilon_j \longrightarrow 0 \quad \text{on the set}\,\,\,  \left\{u<\phi - \frac{1}{k}\right\} \quad \text{as} \,\,\,j\rightarrow\infty.
$$
Therefore,
$$
   |D u|^\gamma F(x, D^2 u)  = 0 \quad \text{in} \quad \left\{u < \phi- \frac{1}{k}\right\} \quad (\text{in the viscosity sense})
$$
for each $k \in \mathbb{N}$ via stability of viscosity solutions (see, \cite[Corollary 2.7 and Remark 2.8]{BeDe14}). Moreover, since $\displaystyle \{u < \phi\} = \bigcup_{k=1}^{\infty} \left\{u<\phi - \frac{1}{k}\right\}$, we get in the viscosity sense
$$
  |D u|^\gamma F(x, D^2 u)  = 0 \quad \text{in} \quad \{u < \phi\}.
$$

Next let us consider the open set $\mathcal{O} \defeq \{u < \phi\}$ and suppose for the sake of contradiction that $\mathcal{O} \neq \emptyset$. From hypothesis under $\phi$ we have (in the viscosity sense)
{\small{
$$
   |D \phi|^\gamma F(x, D^2 \phi) \geq 0 \geq |D u|^\gamma F(x, D^2 u) \,\,\, \text{in} \,\,\, \mathcal{O} \,\,\, \stackrel[]{\text{Lemma \ref{cutting}}}{\Longrightarrow} \,\,\, F(x, D^2 \phi) \geq 0 \geq F(x, D^2 u) \quad \text{in} \quad \mathcal{O}
$$}}
and $u= \phi$ on $\partial \mathcal{O}$. From  Comparison Principle we get that $u \geq \phi$ in $\mathcal{O}$, which clearly is a contradiction. Thus, we conclude that $\mathcal{O} = \emptyset$ and $u \geq \phi$ in $B_1$.

Finally, we will show that
$$
|D u|^\gamma F(x, D^2 u) = f(x) \quad \text{in} \quad \{u > \phi\}
$$
in the viscosity sense. For that purpose, notice that for each $k \in \mathbb{N}$ we have that
$$
  f(x)\Phi_{\varepsilon_j}(u_{\varepsilon_j}-\phi) \pm \varepsilon_j\longrightarrow f(x) \quad \text{a.e. on the set}\,\,\,  \left\{u>\phi + \frac{1}{k}\right\}.
$$
Therefore, we conclude (in the viscosity sense) that
$$
\displaystyle   |D u|^\gamma F(x, D^2 u) = f(x) \quad \text{in} \quad \{u > \phi\} = \bigcup_{k=1}^{\infty} \left\{u>\phi + \frac{1}{k}\right\} \quad \text{as} \quad j \to \infty
$$
via stability of notion of viscosity solutions (see, \cite[Corollary 2.7 and Remark 2.8]{BeDe14}) and the result is proved.

In conclusion, $u$ fulfils the following \textit{a priori} estimate
$$
\begin{array}{rcl}
  \displaystyle \|u\|_{C^{1,\vartheta}(\overline{B_1})} & \leq &  \displaystyle \liminf_{j \to \infty} \|u_{\varepsilon_j}\|_{C^{1,\vartheta}(\overline{B_1})} \\
   & \leq & C(\vartheta).\left(\|u\|_{L^{\infty}(B_1)} + \|g\|_{C^{1, \zeta}(\partial B_1)}+\|f\|_{L^{\infty}(B_1)}^{\frac{1}{\gamma+1}}\right).
\end{array}
$$
\end{proof}

As an immediate consequence we obtain uniqueness of existing solutions.
\begin{cor}[{\bf Uniqueness, \cite[Corollary 2.4]{DSV}}]
The viscosity solution found in the Theorem \ref{Thm1} is unique.
\end{cor}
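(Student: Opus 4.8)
The plan is to upgrade the solution produced in Theorem \ref{Thm1} to a \emph{global} one‑sided viscosity inequality and then close the argument by comparison, exactly as in \cite[Corollary 2.4]{DSV}. I will describe the case $\inf_{B_1}f>0$; the case $\sup_{B_1}f<0$ is symmetric, with sub- and supersolutions interchanged.

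First I would revisit the penalization used in the existence proof. Since $0\le\Phi_\varepsilon\le1$ and $f\ge0$, the right-hand side of \eqref{Eqep} obeys $f(x)\Phi_\varepsilon(u_\varepsilon-\phi)+\varepsilon\le f(x)+\varepsilon$ at every point, so each penalized solution $u_\varepsilon$ is a viscosity supersolution of $|Dw|^\gamma F(x,D^2w)=f(x)+\varepsilon$ in $B_1$. Passing to the limit along the subsequence $u_{\varepsilon_j}\to u$ constructed in Theorem \ref{Thm1} and invoking stability of viscosity supersolutions under locally uniform convergence (\cite[Corollary 2.7 and Remark 2.8]{BeDe14}), I would obtain that any solution $u$ of \eqref{Eq4.1} coming from that scheme satisfies $|Du|^\gamma F(x,D^2u)\le f(x)$ in all of $B_1$, in addition to the properties $u\ge\phi$ in $B_1$, $u=g$ on $\partial B_1$, and $|Du|^\gamma F(x,D^2u)=f(x)$ in the open set $\{u>\phi\}$ that are already established there.

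With this in hand, let $u_1,u_2$ be two solutions furnished by Theorem \ref{Thm1}. To prove $u_1\le u_2$ I would argue by contradiction on the open, bounded set $\mathcal{D}\defeq\{u_1>u_2\}$: on $\mathcal{D}$ one has $u_1>u_2\ge\phi$, hence $\mathcal{D}\subset\{u_1>\phi\}$ and $|Du_1|^\gamma F(x,D^2u_1)=f(x)$ there, while by the previous step $u_2$ is a supersolution of $|Dw|^\gamma F\le f$ on $B_1\supset\mathcal{D}$; moreover $u_1=u_2$ on $\partial\mathcal{D}$ (and $=g$ on $\partial\mathcal{D}\cap\partial B_1$) by continuity. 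Since $\inf_{\mathcal{D}}f\ge\inf_{B_1}f>0$, the Comparison Principle (Lemma \ref{comparison principle}, which holds verbatim on any bounded subdomain, here $\mathcal{D}$) forces $u_2\ge u_1$ in $\mathcal{D}$, contradicting the definition of $\mathcal{D}$ unless $\mathcal{D}=\emptyset$. Exchanging the roles of $u_1$ and $u_2$ then gives the reverse inequality, so $u_1\equiv u_2$.

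The only delicate point is the first step: because $\Phi_\varepsilon(u_\varepsilon-\phi)$ generally does not converge near the contact set $\{u=\phi\}$, one really does need the pointwise bound $f\Phi_\varepsilon(u_\varepsilon-\phi)+\varepsilon\le f+\varepsilon$ (which is precisely where $f\ge0$ enters) in order to replace the oscillating right-hand side by the convergent majorant $f+\varepsilon$ before taking limits. A direct attempt to show that an arbitrary solution of the $\chi_{\{u>\phi\}}$-problem is a global supersolution of $|Dw|^\gamma F\le f$, by testing from below at a contact point, runs into exactly this obstruction (there one only controls $|D\phi|^\gamma F(x,D^2\phi)\ge0$, the wrong sign). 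Once the global one-sided equation is available, the remainder is the routine comparison argument sketched above.
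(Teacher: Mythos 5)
Your argument for the case $\inf_{B_1}f>0$ is correct, and it is the natural one (the paper itself offers no proof, deferring entirely to \cite[Corollary 2.4]{DSV}, which concerns the min-formulation rather than the $\chi$-formulation treated here): the pointwise bound $f\Phi_\varepsilon(u_\varepsilon-\phi)+\varepsilon\le f+\varepsilon$ together with stability does upgrade any limit of the penalization scheme to a global viscosity supersolution of $|Du|^\gamma F(x,D^2u)\le f$, and the comparison on $\mathcal{D}=\{u_1>u_2\}\subset\{u_1>\phi\}$ (where $u_1$ solves the equation, hence is a subsolution, and $u_1=u_2$ on $\partial\mathcal{D}$) then forces $\mathcal{D}=\emptyset$. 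The application of Lemma \ref{comparison principle} on the subdomain $\mathcal{D}$ is legitimate since $\inf_{\mathcal{D}}f\ge\inf_{B_1}f>0$.

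The gap is the sentence ``the case $\sup_{B_1}f<0$ is symmetric.'' It is not. When $f\le 0$ the penalized right-hand side satisfies $f\Phi_\varepsilon(u_\varepsilon-\phi)-\varepsilon\ge f-\varepsilon$, so the inequality you inherit in the limit is $|Du|^\gamma F(x,D^2u)\ge f$, i.e.\ $u_2$ is a global \emph{subsolution}. On $\mathcal{D}=\{u_1>u_2\}\subset\{u_1>\phi\}$ the function $u_1$ satisfies the equation, so the comparison principle, with $u_1$ as supersolution and $u_2$ as subsolution, only returns $u_1\ge u_2$ in $\mathcal{D}$ --- the inequality you already have, and no contradiction. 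This is not a fixable bookkeeping issue: the one-sided information available in that regime ($f\le|Du|^{\gamma}F(x,D^2u)\le 0$ globally, equality with $f$ on $\{u>\phi\}$, $u\ge\phi$, $u=g$ on $\partial B_1$) does not determine the solution. For instance, in one dimension $u''=-\chi_{\{u>0\}}$ on $(-1,1)$ with $\phi\equiv 0$ and $g\equiv 0$ admits both $u\equiv 0$ and $u(x)=\frac{1}{2}(1-x^2)$, and both functions enjoy every property listed above. Hence uniqueness for $\sup_{B_1}f<0$, if it holds at all for the output of the scheme, requires identifying which solution the penalization actually selects, not merely a comparison between two functions sharing those properties. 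You should either restrict the statement you prove to the branch $\inf_{B_1}f>0$ or supply a genuinely different argument for the other branch.
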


Finally, we would like to emphasize that in the homogeneous scenario, i.e. $f \equiv 0$, we do have uniqueness of existing solutions (cf. \cite[Theorem 3.3]{BLOP}) thanks to the fact that
$$
   |D w|^\gamma F(x, D^2 w) = 0 \,\,\, \text{in} \,\,\, B_1 \,\,\, \stackrel[]{\text{Lemma \ref{cutting}}}{\Longrightarrow} \,\,\, F(x, D^2 w)  = 0 \quad \text{in} \quad B_1.
$$

\subsection*{Acknowledgements.} This work was partially supported by Consejo Nacional de Investigaciones Cient\'{i}ficas y T\'{e}cnicas (CONICET-Argentina), Coordena\c{c}\~{a}o de Aperfei\c{c}oamento de Pessoal de N\'{i}vel Superior (PNPD/UnB-Brazil) Grant No. 88887.357992/2019-00 and CNPq-Brazil under Grant No. 310303/2019-2. J.V. da Silva thanks FCEyN/CEMIM from Universidad Nacional de Mar del Plata for its warm hospitality and for fostering a pleasant scientific atmosphere during his visit where part of this manuscript was written.

The authors would like to thank the anonymous Referee whose insightful comments and suggestions benefit a lot the final outcome this manuscript.

\end{document}